\begin{document}
\newtheorem{theorem}{Theorem}[section]
\newtheorem{lemma}[theorem]{Lemma}
\newtheorem{proposition}[theorem]{Proposition}
\theoremstyle{definition}
\newtheorem{remark}[theorem]{Remark}
\newtheorem{definition}[theorem]{Definition}
\numberwithin{equation}{section}

%%%%%%%%%%%%%%%%%%%%%%%%%%%%%

\newcommand{\choice}[2]{#1}    %% Comment out this line for the short version.
%\newcommand{\choice}[2]{#2}    %% Comment out this line for the full version.

%%%%%%%%%%%%%%%%%%%%%%%%%%%%%

\allowdisplaybreaks[3]
\baselineskip 16pt
%\pagestyle{empty}

% o umlaut \"
\title{Low regularity a priori estimate for KDNLS via the short-time Fourier restriction method%
\choice%
{\thanks{A shorter version of this article has been published in {\it Adv.~Contin.~Discrete Models} 2023, 10 (https://doi.org/10.1186/s13662-023-03756-6), in which part of the proof of Proposition~\ref{prop:R6} was omitted.
In the present version, we include a complete proof of Proposition~\ref{prop:R6} and also add the following contents in Section~\ref{sec:2}: Definition~\ref{def:UpVp}, Lemma~\ref{lem:Up-restriction}, and a discussion after Lemma~\ref{lem:L6} with a proof of the estimate \eqref{est:disp}.}%
}{%
}%
}

\author{Nobu KISHIMOTO\thanks{Research Institute for Mathematical Sciences, Kyoto University, Kyoto 606-8502, JAPAN} ~and~ Yoshio TSUTSUMI\thanks{Institute for Liberal Arts and Sciences, Kyoto University, Kyoto 606-8501, JAPAN}\\[15pt]}
\date{Dedicated to the memory of Professor Jean Ginibre}

\maketitle

\begin{abstract}
In this article, we consider the kinetic derivative nonlinear Schr\"odinger equation (KDNLS), which is a one-dimensional nonlinear Schr\"odinger equation with a cubic derivative nonlinear term containing the Hilbert transformation.
For the Cauchy problem both on the real line and on the circle, we apply the short-time Fourier restriction method to establish \emph{a priori} estimate for small and smooth solutions in Sobolev spaces $H^s$ with $s>1/4$.
\end{abstract}

%%
%\renewcommand{\thefootnote}{\fnsymbol{footnote}}
%\footnotetext{\emph{Date}: September 20, 2022.}
%\renewcommand{\thefootnote}{\arabic{footnote}}
%%

%%%%%%%%%%%%%%%%%%%%%%%%%%%%%%%%%%%%%%%%%%
%     Section 1. Introduction
%%%%%%%%%%%%%%%%%%%%%%%%%%%%%%%%%%%%%%%%%%

\section{Introduction}

In the present article, we continue our study in \cite{KT22,KT-gauge} and consider the kinetic derivative nonlinear Schr\"odinger equation (KDNLS) on $\mathbb{R}$ and on $\mathbb{T}:=\mathbb{R}/2\pi \mathbb{Z}$:
\begin{equation}\label{kdnls}
\partial_t u = i \partial_x^2 u + \alpha \partial_x \big( |u|^2u\big) + \beta \partial_x \big( H(|u|^2)u\big) ,\quad t\in (0,T),\quad x\in \mathbb{R}~\text{or}~\mathbb{T},
\end{equation}
where $\alpha,\beta$ are real constants and $H$ is the Hilbert transformation.
We assume $\beta <0$ throughout this article.

In the periodic case, we proved in \cite{KT-gauge} that the Cauchy problem has a (forward-in-time) global solution for any initial data in $H^s(\mathbb{T})$ if $s>1/4$, with the solution map $u(0)=u_0 \mapsto u(\cdot )$ being (locally-in-time) continuous in the $H^s$ topology \emph{away from the origin} $u_0=0$.
More precisely, we proved the following claims:
\begin{enumerate}
\item[(i)] For any $s>1/4$ and any $R\geq r>0$, there exist $T>0$ and a solution map $u_0\mapsto u$ on the set $\{ u_0\in H^s(\mathbb{T}):\| u_0\|_{H^s}\leq R,\,\| u_0\|_{L^2}\geq r\}$ which gives a solution $u\in C([0,T];H^s(\mathbb{T}))$ to \eqref{kdnls} on $[0,T]$ with $u(0)=u_0$ and is continuous in the $H^s$ topology.
\item[(ii)] The above (non-trivial) solution $u(t)$ is smooth (especially in $H^1(\mathbb{T})$) for $t>0$, and then it extends to a global solution by means of the $H^1$-upper and $L^2$-lower \emph{a priori} bounds which are obtained for $H^1$ solutions of arbitrary size.
\end{enumerate}

\noindent
Note that the trivial solution $u\equiv 0$ is a global solution for $u_0=0$.
The continuity of the solution map at the origin can be verified if $s>1/2$ (\cite{KT22}), but it is open for $1/2\geq s>1/4$.
This is because {\it a priori} estimates and the local existence time given by the contraction argument depend on the reciprocal of the $L^2$ norm of solution for $1/2 \geq s > 1/4$.
In the non-periodic case, local well-posedness of the Cauchy problem in $H^s(\mathbb{R})$ can be proved for $s>3/2$ by the energy method, but no result seems to be currently available below $H^{3/2}$. 
To summarize, on $\mathbb{T}$ we have a global solution for $s>1/4$, while on $\mathbb{R}$ we only have a local solution for $s>3/2$.
We also note that these solutions to the Cauchy problem are unique in $C_tH^s_x$ if $s>3/2$.

The goal of this article is to prove an {\it a priori} $H^s$ estimate for small and smooth solutions to \eqref{kdnls} in the regularity range $1/2\geq s>1/4$.
In the periodic case, this and an approximation argument would imply the same estimate for the small (rough) $H^s$ solutions constructed in \cite{KT-gauge}, thus verifying the continuity of the solution map at the origin. 
Although our argument in the present paper is applicable to both periodic and non-periodic problems, we will mainly consider the periodic case, which seems technically more complicated.
(See Remark~\ref{rem:nonperiodic} below for a comment on the non-periodic case.)
\begin{theorem}\label{thm:main}
Let $\mathcal{M}=\mathbb{R}$ or $\mathbb{T}$ and $s>1/4$.
Then, there exist $\delta >0$ and $C>0$ such that if $0<T\leq 1$ and $u\in C([0,T];H^\infty (\mathcal{M}))$ is a smooth solution to \eqref{kdnls} on $\mathcal{M}$ satisfying $\| u(0)\|_{H^s}\leq \delta$, then it holds that 
\begin{equation}\label{aprioribound}
\| u \|_{L^\infty ([0,T];H^s)} \leq C\| u(0)\|_{H^s}.
\end{equation}
\end{theorem}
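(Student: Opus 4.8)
The plan is to run the short-time Fourier restriction method on a gauge-transformed version of \eqref{kdnls}. First I would recall from \cite{KT22,KT-gauge} the gauge transformation that removes the worst part of the nonlinearity: writing $v = \mathcal{G}u$ for the appropriate (nonlinear, $L^2$-based) gauge, the equation for $v$ has the $\alpha$-nonlinearity and the leading piece of the $\beta$-nonlinearity placed in a form amenable to multilinear estimates, while the Hilbert-transform term $\beta\partial_x(H(|u|^2)u)$ contributes a genuinely dissipative (parabolic-type) term because $\beta<0$; this dissipation at high frequencies is what ultimately allows one to reach $s>1/4$. The key point is that the gauge is bi-Lipschitz on balls in $H^s$ bounded away from the origin in $L^2$, but since we only need small data $\|u(0)\|_{H^s}\le\delta$ and we are proving an a priori bound (not continuity of the solution map at the origin), the relevant nonlinear corrections are perturbative and the $L^2$ norm, though small, is controlled by conservation/near-conservation so no division by $\|u\|_{L^2}$ is needed.

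Next I would set up the short-time function spaces: partition frequency space dyadically, $|\xi|\sim N$, and on each block use a time-localization at scale $T_N = N^{-1}$ (or a suitable power), working in $X^{s,b}$-type or $U^p$/$V^p$-type spaces (Definition~\ref{def:UpVp}) localized to intervals of that length, summed in $\ell^2$ over the dyadic blocks to define the solution space $F^s$ and a companion nonlinearity space $N^s$. The linear estimate and the energy estimate are standard in this framework; the crux is the bilinear/trilinear estimates
\[
\|\text{nonlinear terms}\|_{N^s} \lesssim \|v\|_{F^s}^2\bigl(1+\|v\|_{F^s}\bigr),
\]
together with an energy-type estimate $\|v\|_{F^s}^2 \lesssim \|v(0)\|_{H^s}^2 + \|v\|_{F^s}^3(1+\|v\|_{F^s})$, from which \eqref{aprioribound} follows by a continuity/bootstrap argument once $\delta$ is small. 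For the multilinear estimates I would exploit: (a) the derivative gain from the short-time localization, which effectively trades $\partial_x$ for $N\cdot T_N = O(1)$; (b) the bilinear Strichartz / $L^6$ Schr\"odinger estimate (Lemma~\ref{lem:L6}) and the dispersive bound \eqref{est:disp} in the high-low and high-high interaction regimes; and (c) crucially, in the resonant high-high-to-low interactions — where the gauge does not fully kill the resonance — the parabolic smoothing from the dissipative term to absorb the loss of derivative. This dichotomy (non-resonant: use dispersion and the short-time gain; resonant: use dissipation) is the heart of the matter and is exactly what Propositions of the type \ref{prop:R6} are designed to handle.

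The main obstacle I anticipate is the resonant contribution in the periodic setting: on $\mathbb{T}$ the resonant set is not negligible (unlike on $\mathbb{R}$, where transversality gives extra decay), and the standard gauge transformation leaves a residual resonant term that is not obviously controlled by the short-time spaces alone at $s=1/4+$. Handling it requires carefully tracking the sign of $\beta$ so that the associated quadratic form in the energy estimate has a favorable sign (a coercive dissipative contribution) rather than merely being estimated in absolute value — this is presumably the content whose proof was omitted in the shorter published version and reinstated here as Proposition~\ref{prop:R6}. A secondary technical point is the interaction between the nonlinear gauge and the function spaces: one must show the gauge maps $F^s$ into itself (with quadratic-in-norm corrections) and that its time derivative, when substituted into the energy estimate, produces only terms of the same perturbative structure; this is where the additional tools added in Section~\ref{sec:2} (Lemma~\ref{lem:Up-restriction} and the refined $U^p$/$V^p$ machinery) are needed. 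Once these multilinear and energy estimates are in place, the a priori bound \eqref{aprioribound} is a routine bootstrap, and the same scheme applied on $\mathbb{R}$ (where the resonant analysis simplifies) gives the non-periodic case, as indicated in Remark~\ref{rem:nonperiodic}.
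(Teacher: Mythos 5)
Your overall framework (short-time Fourier restriction with $U^p/V^p$-type spaces at frequency-dependent time scales, a trilinear estimate, an energy estimate with a modified energy that exploits $\beta<0$, and a bootstrap to close) is aligned with the paper's architecture, and you correctly identify that the sign of $\beta$ must produce a coercive quadratic term in the energy estimate (this is the $\mathcal{Q}_1\le 0$ observation) and that the residual sextilinear term is what Proposition~\ref{prop:R6} controls. However, there is a genuine gap at the very first step: the paper does \emph{not} perform a gauge transformation. It works with the untransformed equation \eqref{kdnls} throughout. The reason is precisely the one you acknowledge in passing and then try to wave away --- the gauge of \cite{KT-gauge} for KDNLS is only bi-Lipschitz on sets bounded away from the origin in $L^2$ (its definition involves $\|u\|_{L^2}^{-1}$), and the entire point of this paper is to obtain an a priori estimate for \emph{small} data, i.e.\ near the origin, where the gauge degenerates. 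Your claim that ``no division by $\|u\|_{L^2}$ is needed'' because the corrections are perturbative is not substantiated and is in fact contrary to the motivation stated in the introduction: the $H^s$ estimates obtained via the gauge in \cite{KT-gauge} depend on $\|u\|_{L^2}^{-1}$ exactly in the range $1/4<s\le 1/2$, which is what this paper aims to remove.

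Instead of gauging, the paper symmetrizes the time derivative of $E^a_0(u)=\|\sqrt{a(D)}u\|_{L^2}^2$ directly. The $\alpha$-part symmetrizes as in DNLS and is cancelled by a normal-form correction; the $\beta$-part, after symmetrization, splits as $\mathcal{Q}_1+\mathcal{Q}_2$ where $\mathcal{Q}_1$ is a manifestly non-positive quadratic form (this uses $\beta<0$ and the strict monotonicity of $\xi\mapsto\xi a(\xi)$) and $\mathcal{Q}_2$ vanishes on the resonant set $\xi_{12}\xi_{23}=0$, so it can be absorbed by the quartic correction term $E^a_1(u)$ with multiplier $b^a_4$. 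The heavy lifting is then showing $b^a_4$ is smooth on $\Gamma_4$ with good derivative bounds (Lemmas~\ref{lem:q},~\ref{lem:separation}) and proving the sextilinear remainder estimate (Proposition~\ref{prop:R6}) via bilinear Strichartz estimates adapted to the Hilbert transform (Lemmas~\ref{lem:bs1},~\ref{lem:bs2}). Your picture of ``gauge kills the resonance, dissipation handles what's left'' does not match this; in the paper there is no gauge, and the dissipative structure is encoded purely as the sign of $\mathcal{Q}_1$, with the rest being a normal-form cancellation plus hard harmonic analysis. A secondary inaccuracy: the $L^2$ norm of a KDNLS solution is not conserved but monotonically decreasing ($\partial_t\|u\|_{L^2}^2\le 0$); the proof uses this monotonicity, not near-conservation, when absorbing $E^a_1$ by $E^a_0$ under the smallness hypothesis.
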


To establish the $H^s$ {\it a priori} bound \eqref{aprioribound}, we shall employ the short-time Fourier restriction method.
The short-time $X^{s,b}$ norms were introduced by Ionescu, Kenig, and Tataru \cite{IKT08}; the idea is to combine the $X^{s,b}$ analysis implemented in frequency-dependent small time intervals with an energy-type argument recovering the estimate on the whole interval.
The method has been applied to the modified Benjamin-Ono and the derivative NLS equations by Guo \cite{G11} in the non-periodic case, and in the periodic case by Schippa \cite{S21}, who used the $U^p$-$V^p$ type spaces instead of $X^{s,b}$.
The $X^{s,b}$ type spaces are suitable for detailed analysis on the resonance structure, while the $U^p$-$V^p$ type spaces work well with sharp cut-off functions in time.
For our purpose, the $U^p$-$V^p$ type spaces seem to be more convenient.
In our argument with the short-time Fourier restriction method, the modified energy plays a crucial role.
Our way of constructing the modified energy is slightly different from that in \cite{KT07}, \cite{G11} and \cite{S21} because of the presence of the Hilbert transformation in the cubic nonlinearity.
To be specific, \eqref{kdnls} has less symmetry than the DNLS, the cubic NLS and the modified Benjamin-Ono equations.
Moreover, it is known that the kinetic term $\beta \partial_x\big( H(|u|^2)u\big)$ in \eqref{kdnls} exhibits a kind of dissipation when $\beta <0$ (e.g., we have $\partial_t\| u(t)\|_{L^2}^2\leq 0$ for (smooth) solutions of \eqref{kdnls}, while the $L^2$ norm is conserved for the DNLS equation).
This dissipative nature has to be taken into account in the construction and the estimate of the modified energy, since otherwise there would remain some uncanceled terms with higher order derivatives compared to the corresponding estimate for the nonlinearity $\alpha \partial_x(|u|^2u)$.
Here, we do not have to estimate the difference of two solutions, since we only consider the continuity of the solution map at the origin.
So, we do not have to consider the modified energy for the difference of two solutions, either.

\begin{remark}
(i) In the case of DNLS, a similar \emph{a priori} $H^s$ estimate was obtained in \cite{G11,S21} for solutions of arbitrary size by using a rescaling argument.
Although the same idea may work for our problem \eqref{kdnls} to remove the smallness condition in Theorem~\ref{thm:main}, we will focus on small solutions in order to keep the argument not too complicated, and also because of our particular interest in the continuity of the solution map at the origin.

(ii)
An adaptation of the theory of low-regularity conservation laws for integrable PDEs by Killip, Vi\c{s}an, and Zhang \cite{KVZ18} might be another possible approach.
For the derivative NLS on $\mathbb{R}$ and on $\mathbb{T}$, the $H^s$ {\it a priori} estimate for $s>0$ was established in \cite{KSp} by this method.
Of course, KDNLS is not known to be completely integrable, but the method seems also useful to some dissipative perturbations of the integrable dispersive equations (e.g., the KdV-Burgers equation).
Unfortunately, this approach has not been successful for KDNLS up to now.
\end{remark}

The plan of the present paper is as follows.
In Section 2, we describe the definition of function spaces we work with, the short-time Strichartz estimates and the short-time bilinear Strichartz estimates.
Assuming the trilinear estimates and the modified energy estimate which are proved in later sections, we give the proof of our main Theorem \ref{thm:main}.
In Section 3, we give the trilinear estimate on the cubic nonlinearity in terms of short-time norms.
In Section 4, we define the modified energy and prove its estimates which are helpful for the short-time argument.

We would like to conclude this section with a couple of comments on Jean Ginibre's work about nonlinear wave and dispersive equations.
Ginibre started to study the scattering theory in the finite energy class for nonlinear Klein-Gordon and Schr\"odinger equations in late 1970's with Giorgio Velo.
Since then, he has made the great contribution to nonlinear partial differential equations, specifically, nonlinear wave and dispersive equations.
In early 1990's, Bourgain presented the so-called Fourier restriction method to study the well-posedness of the Cauchy problem for nonlinear dispersive equations such as nonlinear Schr\"odinger equations and the KdV equation.
The Fourier restriction method is very powerful, but it is rather complicated.
In fact, Bourgain's papers were not very easy to read.
Many people hoped the readable exposition on Bourgain's work about the Fourier restriction method.
In 1996, Ginibre wrote the nice exposition \cite{Gin} on the Fourier restriction method, which contained several new and important observations, for example, the relation between the Fourier restriction norm and the interaction representation in quantum physics.
This helped the Fourier restriction method to prevail among the community of nonlinear wave and dispersive equations.

%%%%%%%%%%%%%%%%%%%%%%%%%%%%%%%%%%%%%%%%%%
%     Section 2. Function spaces, Strichartz estimates
%%%%%%%%%%%%%%%%%%%%%%%%%%%%%%%%%%%%%%%%%%

\section{Function spaces, Strichartz estimates}\label{sec:2}

\subsection{Definition of function spaces}

\choice%
{%
Let us first recall the definition of $U^p$ and $V^p$ spaces.
\begin{definition}[cf.~\cite{HHK09,HHK09er} and \cite{KTVbook}]\label{def:UpVp}
Let $-\infty \leq a<b\leq \infty$, $I:=(a,b)$ and $1\leq p<\infty$.
\begin{itemize}
\item Let $\mathcal{P}(I)$ denote the set of all partitions of the interval $I$:
\[ \mathcal{P}(I):=\big\{ \{ t_k\}_{k=1}^K : K\in \mathbb{N},\,a<t_1<t_2<\dots <t_K<b\big\} .\]
\item A function $a:I\to L^2_x$ is called a $U^p(I)$-atom if 
\[ a(t)=\sum_{k=1}^K\phi_k\chi_{[t_k,t_{k+1})}(t);\quad \{ t_k\} _{k=1}^K\in \mathcal{P}(I),~\{ \phi _k\} _{k=1}^K\subset L^2_x~~\text{s.t.}\sum_{k=1}^K\| \phi _k\|_{L^2}^p=1,\]
where $\chi_\Omega$ denotes the characteristic function of a set $\Omega$ and we set $t_{K+1}:=b$.
The space $U^p(I)$ and its norm are defined as follows:
\begin{gather*}
U^p(I):=\Big\{ u:I\to L^2_x\,\Big|\,u=\sum_{j=1}^\infty \lambda_ja_j,\,\{ \lambda_j\} \in \ell ^1(\mathbb{N};\mathbb{C}),\,\text{$\{ a_j\}$:\,$U^p(I)$-atoms}\Big\} ,\\ 
\| u\|_{U^p(I)}:=\inf \Big\{ \sum _{j=1}^\infty |\lambda _j|\,\Big|\,\{ \lambda_j\} \in \ell ^1,\,\text{$\{ a_j\}$:\,$U^p(I)$-atoms s.t.}~u=\sum_{j=1}^\infty \lambda_ja_j\Big\} .
\end{gather*}
\item The normed space $V^p(I)$ is defined by
\begin{gather*}
V^p(I):=\big\{ v:I\to L^2_x\,\big|\, \| v\|_{V^p(I)}<\infty \big\} ,\\
\| v\|_{V^p(I)}:=\sup _{\{ t_k\}_{k=1}^K\in \mathcal{P}(I)}\Big( \sum _{k=1}^{K-1}\| v(t_{k+1})-v(t_k)\|_{L^2}^p+\| v(t_K)\|_{L^2}^p\Big) ^{1/p}.
\end{gather*}
\end{itemize}
\end{definition}
}{%
For $1\leq p<\infty$ and an interval $I=(a,b)$, $-\infty \leq a<b\leq \infty$, let $U^p(I),V^p(I)$ be the ($L^2_x$-valued) $\ell^p$-atomic space and the space of functions of bounded $p$-variation, respectively, on $I$.
For the precise definition of these spaces, see \cite{HHK09} (also \cite{HHK09er}) and \cite{KTVbook}.
}%
Recall that $U^p(I)$, $V^p(I)$ are Banach spaces and their elements are bounded functions from $I$ to $L^2_x$ which have one-sided limits at every point in $[a,b]$.
Moreover, $u\in U^p(I)$ is right continuous and satisfies $\lim_{t\to a}u(t)=0$.
As usual, we write $V^p_{rc}(I):=\{ v\in V^p(I):\text{$v$ is right continuous}\}$ and $V^p_{-,rc}(I):=\{ v\in V^p_{rc}(I):\lim_{t\to a}v(t)=0\}$, which are closed subspaces of $V^p(I)$.
We have $U^p(I)\subset U^q(I)$ for $p<q$ with continuous inclusion, and similarly for $V^p(I)$, $V^p_{rc}(I)$, $V^p_{-,rc}(I)$.
Following \cite{CHT12} (see also \cite{KT07}), we consider the space $DU^p(I):=\{ u':u\in U^p(I)\}$, where the derivative is taken in the sense of $L^2_x$-valued distributions on $I$.
For each $f\in DU^p(I)$ there is a unique $u\in U^p(I)$ satisfying $f=u'$, and hence $DU^p(I)$ is a \mbox{Banach} space equipped with the norm $\| f\|_{DU^p(I)}=\| u\|_{U^p(I)}$.
Finally, we write $U^p_\Delta (I):=\{ v:e^{i(-\cdot )\partial_x^2}v(\cdot )\in U^p(I)\}$ with $\| v\|_{U^p_\Delta(I)}:=\| e^{i(-\cdot )\partial_x^2}v(\cdot )\|_{U^p(I)}$, and similarly for $V^p_{\Delta}(I)$, $V^p_{rc,\Delta}(I)$, $V^p_{-,rc,\Delta}(I)$, and $DU^p_\Delta (I)$.
Note that $DU^p_\Delta (I)=\{ (\partial_t-i\partial_x^2)u:u\in U^p_\Delta(I)\}$.

We collect some basic properties of these spaces.
\begin{lemma}\label{lem:UpVp}
Let $I=(a,b)$ be any interval.
\begin{enumerate}
\item[(i)] [Continuous embeddings] For any $1\leq p<q<\infty$, we have 
\[ U^p_\Delta(I)\hookrightarrow V^p_{-,rc,\Delta}(I)\hookrightarrow U^q_\Delta(I)\hookrightarrow L^\infty (I;L^2),\qquad V^p_{rc,\Delta}(I)\hookrightarrow L^\infty (I;L^2).\]

\item[(ii)] [Duality] For $1<p<\infty$, we have $L^1(I;L^2)\hookrightarrow DU^p_\Delta(I)$ and
\begin{align*}
\| f\|_{DU^p_\Delta(I)}&=\Big\| \int _a^te^{i(t-t')\partial_x^2}f(t')\,dt'\Big\|_{U^p_\Delta(I)}\\
&=\sup \Big\{ \Big| \int_I\int f\bar{v}\,dx\,dt\Big| : v\in V^{p'}_{rc,\Delta}(I),\, \| v\|_{V^{p'}_\Delta(I)}\leq 1\Big\} \\
&\lesssim \| f\|_{L^1(I;L^2)}
\end{align*}
for $f\in L^1(I;L^2)$.

\item[(iii)] [Extension] Let $n\geq 1$ and $T:(L^2_x)^n\to L^1_{loc,x}$ be an operator such that it is either linear or conjugate linear in each variable.
Let $1\leq p,q\leq \infty$, and assume that the map $(\phi_1,\dots ,\phi_n)\mapsto [t\mapsto T(e^{it\partial_x^2}\phi_1,\dots ,e^{it\partial_x^2}\phi_n)]$ is bounded from $(L^2_x)^n$ to $L^p_t(I;L^q_x)$:
\[ \| T(e^{it\partial_x^2}\phi_1,\dots ,e^{it\partial_x^2}\phi_n)\|_{L^p_t(I;L^q_x)}\leq A\prod_{j=1}^n\| \phi_j\|_{L^2_x} \]
for some $A>0$.
Then, $T$ can be regarded as a map from $(U^p_\Delta(\mathbb{R}))^n$ to $L^p_t(I;L^q_x)$ by $(u_1,\dots ,u_n)\mapsto [t\mapsto T(u_1(t),\dots ,u_n(t))]$, and it is bounded: 
\[ \| T(u_1,\dots ,u_n)\|_{L^p_t(I;L^q_x)}\leq A\prod_{j=1}^n\| u_j\|_{U^p_\Delta(\mathbb{R})}.\]
Here, $U^p_\Delta(\mathbb{R})$ is replaced by $L^\infty _t(\mathbb{R};L^2_x)$ if $p=\infty$.

\item[(iv)] [Interpolation] Let $1\leq p<q<\infty$, $E$ be a Banach space, and $T:U^q_\Delta(I)\to E$ be a bounded, linear or conjugate linear operator such that $\| T\|_{U^q_\Delta(I)\to E}\leq C_q$, $\| T\|_{U^p_\Delta(I)\to E}\leq C_p$ for some $0<C_p\leq C_q<\infty$.
Then, we have
\[ \| T\|_{V^p_{-,rc,\Delta}(I)\to E}\lesssim \Big( 1+\log \frac{C_q}{C_p}\Big) C_p.\]
\end{enumerate}
\end{lemma}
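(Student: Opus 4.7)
My plan is to reduce all four claims to the corresponding statements for the unweighted $U^p,V^p$ spaces of $L^2_x$-valued paths (as developed in \cite{HHK09,HHK09er,KTVbook}) by exploiting that $u\mapsto e^{-it\partial_x^2}u$ is an $L^2_x$-isometry, hence induces isometric isomorphisms $U^p_\Delta(I)\cong U^p(I)$, $V^p_{rc,\Delta}(I)\cong V^p_{rc}(I)$, and so on.

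For (i), the only nontrivial embedding is $V^p_{-,rc}\hookrightarrow U^q$ with $p<q$. I would decompose $v\in V^p_{-,rc}$ as $v=\sum_{n\geq 0}w_n$, where $w_n$ is a step function built from a greedy partition so that consecutive values of $v$ differ by at most $2^{-n}\|v\|_{V^p}$; such a partition has at most $\sim 2^{np}$ pieces, so $w_n$ equals $2^{n(p-q)/q}\|v\|_{V^p}$ times a $U^q$-atom, and the sum converges because $q>p$. The remaining embeddings in (i) are immediate from the definitions. For (ii), the representation $u(t)=\int_a^t e^{i(t-t')\partial_x^2}f(t')\,dt'$ comes from integrating $\partial_{t'}(e^{-it'\partial_x^2}u(t'))=e^{-it'\partial_x^2}f(t')$ using $\lim_{t\to a}u(t)=0$ from (i); the duality formula is the Hadac-Herr-Koch identification $(U^p)^*\cong V^{p'}$ transferred to the $_\Delta$ setting; and $L^1\hookrightarrow DU^p_\Delta$ is Minkowski applied to the Duhamel integral. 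For (iii), writing each $e^{-it\partial_x^2}u_j$ as an $\ell^1$-sum of $U^p$-atoms $\sum_k\lambda_k^{(j)}a_k^{(j)}$ and using the (conjugate-)multilinearity of $T$, the restriction of $T(u_1,\dots,u_n)$ to any cell of a common refinement of the supporting partitions equals $T(e^{it\partial_x^2}\phi_1,\dots,e^{it\partial_x^2}\phi_n)$ for some $\phi_j\in L^2_x$ of norm at most one; the assumed estimate applied on $I$ together with the $\ell^1$ triangle inequality over the atomic coefficients then closes the claim.

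The main obstacle is (iv), where the logarithmic loss must be produced. My plan is the standard Koch-Tataru-style decomposition: after normalizing $\|v\|_{V^p_\Delta}=1$ and writing $v=\sum_{n\geq 0}w_n$ as in (i), one has $\|w_n\|_{U^p_\Delta}\lesssim 1$ and $\|w_n\|_{U^q_\Delta}\lesssim 2^{-n(q-p)/q}$. Applying $T$ term by term and choosing the better of the two bounds gives
\[ \|Tv\|_E\leq \sum_{n\geq 0}\min\bigl(C_p,\,C_q\cdot 2^{-n(q-p)/q}\bigr).\]
The crossover threshold is $n_\ast\sim \tfrac{q}{q-p}\log_2(C_q/C_p)$: the geometric tail $n\geq n_\ast$ sums to $\sim C_p$, while the $\sim n_\ast$ initial terms contribute $\sim \log(C_q/C_p)\cdot C_p$, yielding the claimed bound. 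The extension to conjugate-linear $T$ is immediate since the decomposition $v=\sum w_n$ is real-linear.
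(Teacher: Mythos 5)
Your proposal is correct and coincides with the paper's approach: the paper proves (i)--(iv) simply by citing the corresponding results of Hadac--Herr--Koch (Propositions~2.2, 2.4--2.6, 2.19, 2.20 and Theorem~2.8 of \cite{HHK09}), transferred to the $\Delta$-setting via the $L^2$-isometry $u\mapsto e^{-it\partial_x^2}u$ exactly as you do. Your sketches of the greedy-partition decomposition for $V^p_{-,rc}\hookrightarrow U^q$, the common-refinement argument for the transfer principle, and the $\min(C_p,\,C_q 2^{-n(q-p)/q})$ summation for the logarithmic interpolation are precisely the arguments behind those citations.
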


\begin{proof}
(i) See, e.g., \cite[Propositions~2.2, 2.4, Corollary~2.6]{HHK09}.

(ii) The first equality holds by definition.
If $f\in L^1(I,L^2)$, the function $t\mapsto \int_a^te^{-it'\partial_x^2}f(t')\,dt'\in L^2$ is absolutely continuous and of bounded variation on $\overline{I}$.
Then, the second equality follows, e.g., from \cite[Theorem~2.8, Propositions~2.9, 2.10]{HHK09}.
The last inequality follows from the H\"older inequality and the embedding $V^{p'}_{rc,\Delta}(I)\hookrightarrow L^\infty(I;L^2)$.

(iii) See, e.g., \cite[Proposition~2.19]{HHK09}.

(iv) See, e.g., \cite[Proposition~2.20]{HHK09}.
\end{proof}

\choice%
{The following property of the $U^p$ space was stated in \cite[in the paragraph preceding Lemma~2.1]{CHT12}.
We give a precise statement and a proof for the reader's convenience.
\begin{lemma}\label{lem:Up-restriction}
Let $I=(a,b)$, $I'=(a',b')$ with $-\infty \leq a<b\leq \infty$ and $a<a'<b'\leq b$.
If $u\in U^p(I)$ and $u$ vanishes outside $I'$, then $u|_{I'}\in U^p(I')$ and $\| u|_{I'}\|_{U^p(I')}=\| u\|_{U^p(I)}$.
\end{lemma}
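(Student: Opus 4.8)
The plan is to prove the two inequalities $\|u|_{I'}\|_{U^p(I')}\le\|u\|_{U^p(I)}$ and $\|u\|_{U^p(I)}\le\|u|_{I'}\|_{U^p(I')}$ separately, in each case by transporting atomic decompositions between the two intervals. For the first inequality, I would start from an arbitrary atomic decomposition $u=\sum_j\lambda_j a_j$ on $I$ with $\sum_j|\lambda_j|\le\|u\|_{U^p(I)}+\varepsilon$, and show that the restrictions $a_j|_{I'}$ are (up to a harmless normalization) $U^p(I')$-atoms, or sums of at most two such atoms. Here one has to be slightly careful: a $U^p(I)$-atom $a=\sum_{k=1}^K\phi_k\chi_{[t_k,t_{k+1})}$ with partition points possibly lying outside $\overline{I'}$ restricts to a step function on $I'$ whose breakpoints are $\{t_k\}\cap I'$ together possibly with $a'$; the $\ell^p$ sum of the $L^2$-norms of the surviving pieces is $\le 1$, so $a|_{I'}$ is again a $U^p(I')$-atom (one may need to split off the piece that "starts at $a'$" but the total $\ell^1$ cost is still controlled by $1$). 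Summing gives $u|_{I'}=\sum_j\lambda_j(a_j|_{I'})$ as an admissible decomposition on $I'$, whence $\|u|_{I'}\|_{U^p(I')}\le\sum_j|\lambda_j|\le\|u\|_{U^p(I)}+\varepsilon$.

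For the reverse inequality, the hypothesis that $u$ vanishes outside $I'$ is essential. Given an atomic decomposition $u|_{I'}=\sum_j\lambda_j a_j$ on $I'$, I would extend each atom $a_j$ by zero to all of $I$. An atom $a_j=\sum_{k=1}^K\phi_k\chi_{[t_k,t_{k+1})}$ on $I'=(a',b')$ has $t_{K+1}=b'$, so its zero-extension to $I$ is $\sum_{k=1}^K\phi_k\chi_{[t_k,t_{k+1})}$ viewed on $I=(a,b)$; this is exactly a $U^p(I)$-atom associated to the partition $\{t_1,\dots,t_K\}\cup\{b'\}$ if $b'<b$ (the last block $[b',b)$ carrying coefficient $0$, which is permitted, or one absorbs it by noting atoms allow a final block going up to $b$), and the $\ell^p$ normalization is unchanged. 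Thus $u=\sum_j\lambda_j\tilde a_j$ on $I$ with the same coefficients, giving $\|u\|_{U^p(I)}\le\sum_j|\lambda_j|$, and taking the infimum over decompositions of $u|_{I'}$ yields $\|u\|_{U^p(I)}\le\|u|_{I'}\|_{U^p(I')}$. Combining the two bounds gives equality, and in particular $u|_{I'}\in U^p(I')$.

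The only real subtlety — and the step I would be most careful about — is the bookkeeping of partition points at the left endpoint $a'$ in the first half of the argument: when a $U^p(I)$-atom has a breakpoint strictly inside $I'$ but its leftmost relevant block straddles $a'$, the restriction to $I'$ has a "partial" first block, and one must check that this does not inflate the atomic norm. Since $U^p(I')$-atoms are by definition \emph{finite} step functions with the prescribed $\ell^p$ constraint, and since deleting blocks (those lying in $(a,a']$) and truncating one block only decreases $\sum_k\|\phi_k\|_{L^2}^p$, the restriction is bounded by a single $U^p(I')$-atom; no splitting into two is actually needed. One should also record the trivial but necessary facts that a zero block is allowed (or can be appended), that right-continuity and the vanishing-at-$a$ normalization are preserved under these operations, and that the infimum in the $U^p$ norm is attained in the limit along decompositions — all of which are immediate from Definition~\ref{def:UpVp}.
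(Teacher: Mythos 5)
Your proof of the easy direction ($\|u\|_{U^p(I)}\leq\|u|_{I'}\|_{U^p(I')}$ by zero-extending $U^p(I')$-atoms) is correct and matches the paper. The forward direction, however, has a genuine gap precisely at the point you flag as ``slight bookkeeping at $a'$'' and then dismiss.

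Recall that a $U^p(I')$-atom with $I'=(a',b')$ is $\sum_{k=1}^K\phi_k\chi_{[t_k,t_{k+1})}$ with $a'<t_1<\cdots<t_K<b'$; in particular every $U^p(I')$-atom, and hence every element of $U^p(I')$, vanishes on an initial segment $(a',t_1)$ and satisfies $\lim_{t\to a'+}=0$. Now take a $U^p(I)$-atom $a_j$ whose constant block $[t_k,t_{k+1})$ straddles $a'$, i.e.\ $t_k\leq a'<t_{k+1}$, with $\phi_k\neq 0$. Then $a_j|_{I'}$ equals $\phi_k$ on $(a',t_{k+1})$ and does \emph{not} vanish as $t\to a'+$: it is not a $U^p(I')$-atom, not a sum of finitely many, and in fact not even an element of $U^p(I')$. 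Your claim that ``deleting blocks and truncating one block only decreases $\sum\|\phi_k\|^p$, so the restriction is bounded by a single $U^p(I')$-atom'' fails because the $\ell^p$-budget is not the obstruction; the structural requirement that the function vanish near the left endpoint is. Note also that your forward argument never invokes the hypothesis that $u$ vanishes outside $I'$, which is a warning sign: without that hypothesis, $u|_{I'}$ need not belong to $U^p(I')$ at all, so an argument that doesn't use it cannot be complete.

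The real point, which the paper's proof is built around, is that the individual atoms $a_j$ may have $a_j(a')\neq 0$ even though $\sum_j\lambda_ja_j(a')=u(a')=0$: the left-endpoint defect only cancels across the \emph{infinite} sum, not atom by atom. The paper first replaces the decomposition by one in which every atom vanishes outside $[a',b')$ (multiplying $\chi_{[a',b')}a_j$ by a normalizing constant $\mu_j\geq1$), and then — since even these restricted atoms may be nonzero at $a'$ — introduces truncations $\chi_{[t_J,b')}a_j$ with $t_J\downarrow a'$ chosen so that each $a_j$ is constant on $[a',2t_J-a')$; the restrictions of these truncated atoms are honest $U^p(I')$-atoms. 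One then shows the partial sums $u^{[J]}|_{I'}$ form a Cauchy sequence in $U^p(I')$, the key estimate using $\sum_{j\leq J_2}\lambda_ja_j(a')\to 0$, which is exactly where the vanishing hypothesis enters. Your sketch is missing this limiting mechanism entirely, and I don't see how to repair it without essentially reconstructing the paper's argument.
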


\begin{proof}
Let $\varepsilon>0$, and write $u$ as $u=\sum _j\lambda _ju_j$ with $\{ u_j\}$ being $U^p(I)$-atoms, $\{ \lambda_j\}\subset \mathbb{C}$ such that $\sum_j|\lambda_j|<\| u\|_{U^p(I)}+\varepsilon$.
Since $u$ vanishes outside $I'$, it holds that $u=\sum_j\lambda_j\chi_{[a',b')}u_j$. 
A simple argument shows that $\mu_j\chi_{[a',b')}u_j$ is a $U^p(I)$-atom for some $\mu _j\geq 1$ unless $\chi_{[a',b')}u_j\equiv 0$.
Hence, replacing $\lambda_j$ by $\lambda_j/\mu_j$ and excluding the terms with $\chi_{[a',b')}u_j\equiv 0$, we may assume that each $U^p(I)$-atom $u_j$ in the representation $u=\sum _j\lambda _ju_j$ is zero outside $[a',b')$.

To show $u|_{I'}\in U^p(I')$, we define a sequence $\{ u^{[J]}\}_{J\geq 1}$ by 
\[ u^{[J]}:=\sum_{j=1}^J\lambda_j\chi_{[t_J,b')}u_j,\]
where $\{ t_J\}_{J\geq 1}\subset (a',(a'+b')/2)$ is a decreasing sequence such that $\lim_{J\to \infty}t_J=a'$ and $u_j$ is constant on $[a',a'+2(t_J-a'))$ for all $J\geq j\geq 1$.
We observe that $\chi_{[t_J,b')}u_j|_{I'}$ is a $U^p(I')$-atom for each $J\geq j\geq 1$, and hence $u^{[J]}|_{I'}\in U^p(I')$ and $\| u^{[J]}|_{I'}\|_{U^p(I')}\leq \sum _{j=1}^{J}|\lambda_j|$.
Since the sum $\sum_j\lambda_ju_j$ is absolutely convergent in $U^p(I)\hookrightarrow L^\infty (I;L^2)$, we have the pointwise convergence $u^{[J]}(t)\to u(t)$ on $I'$.
For $J_1>J_2\geq 1$, we have
\begin{align*}
u^{[J_1]}-u^{[J_2]}&= \sum _{j=J_2+1}^{J_1}\lambda_j\chi_{[t_{J_1},b')}u_j+\sum _{j=1}^{J_2}\lambda_j\chi _{[t_{J_1},t_{J_2})}u_j \\
&=\sum _{j=J_2+1}^{J_1}\lambda_j\chi_{[t_{J_1},b')}u_j+\Big( \sum _{j=1}^{J_2}\lambda_ju_j(a')\Big) \chi_{[t_{J_1},t_{J_2})},
\end{align*}
so that (since $\sum_{j=1}^\infty \lambda_ju_j(a')=u(a')=0$)
\[ \| u^{[J_1]}|_{I'}-u^{[J_2]}|_{I'}\| _{U^p(I')}\leq \sum _{j=J_2+1}^\infty |\lambda_j|+\Big\| \sum_{j=1}^{J_2}\lambda_ju_j(a')\Big\|_{L^2}\to 0\qquad (J_2\to \infty ).\]
This shows that $\{ u^{[J]}|_{I'}\} \subset U^p(I')$ is a Cauchy sequence.
Since $U^p(I')$ is a Banach space, the limit $u|_{I'}$ belongs to $U^p(I')$.
Furthermore, we have 
\[ \| u|_{I'}\|_{U^p(I')}=\lim_{J\to \infty}\| u^{[J]}|_{I'}\|_{U^p(I')}\leq \sum_j|\lambda_j|< \| u\|_{U^p(I)}+\varepsilon ,\]
which implies the inequality $\| u|_{I'}\|_{U^p(I')}\leq \| u\|_{U^p(I)}$.

The reverse inequality is immediate, since every $U^p(I')$-atom gives a $U^p(I)$-atom by the zero extension.
\end{proof}}%
{%
}%

Now, we define the short-time norms.
In this article, we use capital letters $N,K,\dots $ for dyadic integers $1,2,4,8,\dots$.

\begin{definition}\label{defn:spaces}
First of all, we fix a bump function 
\choice{%
\[ \eta \in C^\infty_0(\mathbb{R})\quad \text{even, monotone on $[0,\infty)$, and $\chi_{[-4/3,4/3]}\leq \eta \leq \chi_{[-5/3,5/3]}$.}\]
}{%
\[ \eta \in C^\infty_0(\mathbb{R})\quad \text{even, monotone on $[0,\infty)$, and $\chi_{[-4/3,4/3]}\leq \eta \leq \chi_{[-5/3,5/3]}$,}\]
where $\chi_\Omega$ denotes the characteristic function of a set $\Omega$.
}%
Define $\{ \psi _N\}_{N\geq 1} \subset C^\infty_0(\mathbb{R})$ by 
\[ \psi_1(\xi):=\eta (\xi),\qquad \psi _N(\xi):=\eta (\xi /N)-\eta (2\xi/N)\quad \text{for $N\geq 2$}, \]
so that $1=\sum _{N\geq 1}\psi _N(\xi)$ and $\mathrm{supp}\,(\psi_N)\subset \mathcal{I}_N$, where 
\[ \mathcal{I}_1:=[-2,2],\qquad \mathcal{I}_N:=[-2N,2N]\setminus (-N/2,N/2)\quad \text{for $N\geq 2$.}\]
We define the corresponding Littlewood-Paley projections $P_N:=\mathcal{F}^{-1}_\xi \psi _N\mathcal{F}_x$.

Next, we define frequency-localized short-time norms $F_N(T)$, $G_N(T)$ for functions $u:[0,T]\to L^2$ with $\mathrm{supp}\,(\hat{u}(t,\xi))\subset [0,T]\times \mathcal{I}_N$ by
\begin{align*}
\| u\|_{F_N(T)}&:=\sup_{I=[a,b)\subset [0,T],\,|I|\leq N^{-1}}\| \chi_Iu\| _{U^2_\Delta (\mathbb{R})},\\
\| u\|_{G_N(T)}&:=\sup_{I=(a,b)\subset [0,T],\,|I|\leq N^{-1}}\| u|_I\| _{DU^2_\Delta (I)}.
\end{align*}
In the definition of the $F_N(T)$ norm, we regard $\chi_Iu$ as a function on $\mathbb{R}$ by extending it by zero outside $I$.
Here, we consider half-open intervals $I=[a,b)$ so that $\chi_Iu$ can be right continuous, and we avoid writing the norm $\| \chi_Iu\|_{U^2_\Delta(\mathbb{R})}$ as $\| u\|_{U^2_\Delta ((a,b))}$ since the $U^2_\Delta((a,b))$ norm can be defined only for functions satisfying $\lim_{t\to a+0}u(t)=0$.
The short-time $U^2_\Delta$-type space on $[0,T]$ with spatial regularity $s\in \mathbb{R}$ is defined by
\[ F^s(T):=\big\{ u\in C([0,T];H^s): \| u\|_{F^s(T)}:=\big\| N^s\| P_Nu\|_{F_N(T)}\big\|_{\ell ^2_N}<\infty \big\} .\]
To measure the nonlinearity, the following short-time norm is used:
\[ \| u\|_{G^s(T)}:=\big\| N^s\| P_Nu\|_{G_N(T)}\big\|_{\ell ^2_N}. \]
We also need the following energy norm:
\[ \| u\|_{E^s(T)}:=\big\| N^s\| P_Nu\|_{L^\infty ([0,T];L^2)}\big\|_{\ell^2_N}.\]
\end{definition}

\subsection{Proof of the main theorem}
It is known 
\choice%
{(e.g., \cite[Lemma~3.1]{CHT12}, where the fact shown in Lemma~\ref{lem:Up-restriction} is needed) }%
{(e.g., \cite[Lemma~3.1]{CHT12}) }%
that the norms defined above satisfy the basic linear estimate
\[ \| u\| _{F^s(T)}\lesssim \| u\|_{E^s(T)}+\big\| (\partial_t-i\partial_x^2)u\big\|_{G^s(T)}\]
for any $s\in \mathbb{R}$. 
Then, what we need to show are the trilinear estimate
\[ \big\| \partial_x(|u|^2u)\big\|_{G^s(T)}+\big\| \partial_x\big( H(|u|^2)u\big) \big\|_{G^s(T)}\lesssim \| u\|_{F^s(T)}^3\]
and the energy estimate
\[ \| u\| _{E^s(T)}\lesssim \| u(0)\|_{H^s}+\| u\|_{F^s(T)}^3.\]
We will prove the trilinear estimate for general functions $u\in F^s(T)$ in Section~\ref{sec:trilinear}, and the energy estimate for smooth solutions of \eqref{kdnls} with small initial data in Section~\ref{sec:energy}. 
Both of these estimates require $s>1/4$, and also have the constants uniform for $T\in (0,1]$ but growing for $T>1$.

Let us admit these estimates and prove Theorem~\ref{thm:main}.
For $0<T'\leq T$ and a smooth solution $u$ with initial data small in $H^s$, define
\[ X_s(T'):=\| u\|_{E^s(T')}+\big\| \partial_x(|u|^2u)\big\|_{G^s(T')}+\big\| \partial_x\big( H(|u|^2)u\big) \big\|_{G^s(T')}.\]
The above three estimates show that
\[ X_s(T')\lesssim \| u(0)\|_{H^s}+X_s(T')^3.\]
On the other hand, it is easy to show (e.g., for $u\in C([0,T];H^{s+1+})$) that $X_s(T')$ is continuous in $T'$ and 
\[ \limsup_{T'\to +0}X_s(T')\lesssim \| u(0)\|_{H^s}.\]
Hence, by a bootstrap argument, we have
\[ X_s(T')\lesssim \| u(0)\|_{H^s},\qquad 0<T'\leq T.\]
Since $\| u\|_{L^\infty ([0,T];H^s)}\lesssim \| u\|_{E^s(T)}$, this concludes the proof of Theorem~\ref{thm:main}.

\subsection{Short-time $L^6$ and bilinear Strichartz estimates}

Most of Strichartz-type estimates for the non-periodic Schr\"odinger equation are known to hold for the periodic problem in the short-time setting, and these estimates will be used as basic tools to prove the trilinear and energy estimates.
We begin with the following $L^6$ Strichartz estimate.
\begin{lemma}\label{lem:L6}
For $N\geq 1$ and $0<\delta \lesssim N^{-1}$, we have
\[ \big\| P_{\leq N}e^{it\partial_x^2}\phi \big\|_{L^6([0,\delta ];L^6(\mathbb{T}))}\lesssim \| \phi \|_{L^2}.\]
As a consequence, we have
\[ \| P_{\leq N}u\|_{L^6(I;L^6(\mathbb{T}))}\lesssim \| \chi_Iu\|_{U^6_\Delta (\mathbb{R})}\]
for any interval $I=[a,b)\subset \mathbb{R}$ with $|I|\lesssim N^{-1}$ and any $u:I\to L^2$ such that the zero extension $\chi_Iu$ belongs to $U^6_\Delta (\mathbb{R})$.
\end{lemma}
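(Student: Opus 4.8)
I will deduce the estimate by transference from the sharp non-periodic Strichartz estimate $\|e^{it\partial_x^2}f\|_{L^6_{t,x}(\mathbb{R}\times\mathbb{R})}\lesssim\|f\|_{L^2(\mathbb{R})}$ (the admissible pair $(6,6)$ in one space dimension). First, since $P_{\le N}e^{it\partial_x^2}\phi=e^{it\partial_x^2}(P_{\le N}\phi)$ with $\|P_{\le N}\phi\|_{L^2}\le\|\phi\|_{L^2}$ and $\widehat{P_{\le N}\phi}$ supported in $\{n\in\mathbb{Z}:|n|\le2N\}$, it suffices to prove $\|e^{it\partial_x^2}\phi\|_{L^6([0,\delta];L^6(\mathbb{T}))}\lesssim\|\phi\|_{L^2(\mathbb{T})}$ when $\widehat\phi(n)=0$ for $|n|>2N$; I reduce to this case. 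Next I fix once and for all a function $\chi\in C_c^\infty(\mathbb{R})$ with $\chi\equiv1$ on a fixed interval $[-C_1,C_1]$, where $C_1$ is chosen large enough that $|x-2tn|\le C_1$ whenever $x\in[-\pi,\pi]$, $0\le t\le\delta$, $\delta\lesssim N^{-1}$ and $|n|\le2N$ (possible since then $|2tn|\lesssim1$). Writing $\phi(x)=\sum_n\widehat\phi(n)e^{inx}$ and using that $\chi$ is Schwartz, one has $\|\chi\phi\|_{L^2(\mathbb{R})}^2=\int_0^{2\pi}|\phi|^2\big(\sum_m|\chi(\cdot+2\pi m)|^2\big)\lesssim\|\phi\|_{L^2(\mathbb{T})}^2$.

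The heart of the argument is the elementary identity, obtained by substituting $\xi=n+\theta$ in the Fourier integral for $e^{it\partial_x^2}(\chi\phi)$,
\[ e^{it\partial_x^2}(\chi\phi)(x)=\sum_n\widehat\phi(n)\,e^{inx-itn^2}\big(e^{it\partial_x^2}\chi\big)(x-2tn). \]
For $(t,x)\in[0,\delta]\times[-\pi,\pi]$ and $|n|\le2N$, the choice of $C_1$ gives $\chi(x-2tn)=1$, so $\big(e^{it\partial_x^2}\chi\big)(x-2tn)=1+h(t,x-2tn)$ with $h(t,\cdot):=e^{it\partial_x^2}\chi-\chi$; since $\sum_n\widehat\phi(n)e^{inx-itn^2}=e^{it\partial_x^2}\phi(x)$ is the periodic evolution, we obtain on $[0,\delta]\times\mathbb{T}$
\[ e^{it\partial_x^2}\phi(x)=e^{it\partial_x^2}(\chi\phi)(x)-\sum_{|n|\le2N}\widehat\phi(n)e^{inx-itn^2}h(t,x-2tn). \]
Taking $L^6([0,\delta]\times\mathbb{T})$ norms: the first term is $\le\|e^{it\partial_x^2}(\chi\phi)\|_{L^6(\mathbb{R}\times\mathbb{R})}\lesssim\|\chi\phi\|_{L^2(\mathbb{R})}\lesssim\|\phi\|_{L^2(\mathbb{T})}$ by the $\mathbb{R}$-Strichartz estimate. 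For the remainder, $\|h(t,\cdot)\|_{L^\infty(\mathbb{R})}\le\frac1{2\pi}\|(e^{-it\theta^2}-1)\widehat\chi\|_{L^1_\theta}\lesssim|t|\lesssim N^{-1}$, so Cauchy--Schwarz in $n$ gives a pointwise bound $\lesssim N^{-1}\cdot N^{1/2}\|\phi\|_{L^2(\mathbb{T})}$, and since the domain has measure $\lesssim1$ this is also an $L^6$ bound, hence $\le\|\phi\|_{L^2(\mathbb{T})}$. This proves the first displayed estimate; replacing $\phi$ by $e^{ia\partial_x^2}\phi$ extends it to any interval $I=[a,a+\delta)$ with $|I|\lesssim N^{-1}$, with implied constant independent of $I$ and $N$.

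For the ``as a consequence'' part, I apply the extension principle Lemma~\ref{lem:UpVp}(iii) (with $n=1$), $p=q=6$, and the linear operator $T:=P_{\le N}:L^2_x\to L^1_{loc,x}$: the estimate just proved says precisely that $\phi\mapsto[t\mapsto P_{\le N}e^{it\partial_x^2}\phi]$ is bounded from $L^2_x$ to $L^6_t(I;L^6_x(\mathbb{T}))$ with norm $\lesssim1$, so the lemma yields $\|P_{\le N}v\|_{L^6(I;L^6(\mathbb{T}))}\lesssim\|v\|_{U^6_\Delta(\mathbb{R})}$ for all $v\in U^6_\Delta(\mathbb{R})$; taking $v=\chi_Iu$ and restricting to $I$ gives the claim. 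The step requiring care is the comparison in the second paragraph: one must cut $\phi$ in \emph{physical} space, so that the cutoff window merely translates by $O(1)$ under the short-time flow, rather than in frequency, where the uncertainty principle would force a logarithmic or polynomial loss. Beyond this bookkeeping I do not expect a genuine obstacle.
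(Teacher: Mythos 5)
Your deduction of the ``as a consequence'' part via Lemma~\ref{lem:UpVp}~(iii) applied to $T=P_{\leq N}$ and $v=\chi_Iu$ is exactly what the paper does. For the first (linear Strichartz) estimate, however, you take a genuinely different route. The paper simply cites Burq--G\'erard--Tzvetkov, whose argument establishes the short-time dispersive bound $\|P_{\leq N}e^{it\partial_x^2}\phi\|_{L^\infty(\mathbb{T})}\lesssim |t|^{-1/2}\|\phi\|_{L^1(\mathbb{T})}$ for $0<|t|\leq N^{-1}$ (the extended version of the paper in fact reproves this via Poisson summation, the van der Corput lemma, and integration by parts) and then invokes the Keel--Tao machinery to obtain all admissible $L^p_tL^q_x$ pairs simultaneously. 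You instead transfer the single $L^6_{t,x}$ estimate directly from the line: after cutting $\phi$ in physical space and exploiting the exact identity $e^{it\partial_x^2}(\chi\phi)(x)=\sum_n\widehat\phi(n)e^{inx-itn^2}\big(e^{it\partial_x^2}\chi\big)(x-2tn)$, the windows $x-2tn$ stay inside $\{\chi\equiv1\}$ on the short time scale, so the periodic evolution differs from the line evolution of $\chi\phi$ by an explicit remainder of size $O(|t|N^{1/2}\|\phi\|_{L^2})$. I checked the identity, the bound $\|\chi\phi\|_{L^2(\mathbb{R})}\lesssim\|\phi\|_{L^2(\mathbb{T})}$, and the remainder estimate; the argument is correct. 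The trade-off is clear: the dispersive-plus-Keel--Tao route gives the whole family of admissible exponents in one stroke, while your transference is more self-contained (given the line Strichartz estimate) and avoids the abstract interpolation entirely, at the cost of having to be redone for each exponent pair one actually needs. For the present paper, where only $L^6_{t,x}$ is used, your route is a perfectly good substitute.
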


\begin{proof}
The first estimate was shown in \cite[Proposition~2.9]{BGT04}.
To obtain the second claim, we use Lemma~\ref{lem:UpVp}~(iii) with the operator $T:\phi \mapsto P_{\leq N}\phi$ and apply the resulting estimate to $\chi_Iu\in U^6_\Delta(\mathbb{R})$.
\end{proof}

\choice%
{Actually, the estimate of this type holds on $L^p([0,\delta ];L^q(\mathbb{T}))$ for any admissible pair $(p,q)$; $2/p+1/q=1/2$, $2\leq q\leq \infty$.
This was proved in \cite{BGT04} on more general compact manifolds, by applying Keel-Tao's abstract theory to the short-time dispersive estimate: 
\begin{equation}\label{est:disp}
\big\| P_{\leq N}e^{it\partial_x^2}\phi \big\|_{L^\infty (\mathbb{T})}\lesssim |t|^{-1/2}\| \phi\|_{L^1(\mathbb{T})},\qquad 0<|t|\leq N^{-1}.
\end{equation}
The proof of \eqref{est:disp} for general compact manifolds in \cite{BGT04} was based on semiclassical calculus.
For the case of $\mathbb{T}$, there is a more direct proof, which we would like to present below.
(We learned this proof from a recent paper \cite{LPRT19} studying the Zakharov-Kuznetsov equation on $\mathbb{T}^2$, but the argument would have been well known since a long time ago.)

\begin{proof}[Proof of \eqref{est:disp}]
The claim is reduced to the following pointwise bound on the kernel of $P_{\leq N}e^{it\partial_x^2}$:
\[ \Big| \sum _{\xi \in \mathbb{Z}}\eta _N(\xi )e^{ix\xi -it\xi ^2}\Big| \lesssim |t|^{-1/2},\qquad 0<|t|\leq N^{-1},\quad x\in \mathbb{T}, \]
where $\eta _N(\cdot )=\eta (\cdot /N)=\sum _{K\leq N}\psi _K(\cdot )$ is the symbol of the operator $P_{\leq N}$. 
By the Poisson summation formula $\sum _{\xi \in \mathbb{Z}}f(\xi ) =\sum _{n\in \mathbb{Z}}\hat{f}(2\pi n)$ ($f\in \mathcal{S}(\mathbb{R})$), it suffices to prove
\begin{equation}\label{est:kernel}
\Big| \sum _{n\in \mathbb{Z}}F_{N,t}(x-2\pi n) \Big| \lesssim |t|^{-1/2},\qquad 0<|t|\leq N^{-1},\quad |x|\leq \pi,
\end{equation}
where 
\[ F_{N,t}(y):=\int _{\mathbb{R}}\eta _N(\xi )e^{iy\xi -it\xi ^2}\,d\xi ,\qquad y\in \mathbb{R}.\]
We set $\Phi (\xi ):=y\xi -t\xi ^2$.
On one hand, $|\Phi''|\equiv 2|t|$ and the van~der~Corput lemma show that $|F_{N,t}(y)|\lesssim |t|^{-1/2}$ uniformly in $y\in \mathbb{R}$.
On the other hand, for $|y|\geq 5$, the restrictions $|\xi |\leq 2N$ and $|t|\leq N^{-1}$ imply that $\Phi (\xi )$ has no stationary point and $|\Phi'(\xi )|\sim |y|$, $|\Phi''(\xi )|=2|t|\lesssim N^{-1}$.
Hence, after doing integration by parts twice:
\begin{align*}
|F_{N,t}(y)|&=\Big| \int _{\mathbb{R}}\Big\{ \frac{1}{i\Phi'}\Big( \frac{1}{i\Phi'}\eta_N\Big)'\Big\}'e^{i\Phi}\,d\xi \Big| \\
&\lesssim \int _{-2N}^{2N}\Big( \frac{1}{|\Phi'|^2}|\eta_N''|+\frac{|\Phi''|}{|\Phi'|^3}|\eta_N'|+\frac{|\Phi''|^2}{|\Phi'|^4}|\eta_N|\Big) \,d\xi,
\end{align*}
we see that $|F_{N,t}(y)|\lesssim N^{-1}|y|^{-2}$.
From these estimates, for $0<|t|\leq N^{-1}$ and $|x|\leq \pi$ we have
\[ \Big| \sum _{n\in \mathbb{Z}}F_{N,t}(x-2\pi n) \Big| \lesssim N^{-1}\sum _{|n|\geq 2}\frac{1}{(x-2\pi n)^2} +\sum _{n=-1}^1|t|^{-1/2}\lesssim |t|^{-1/2}, \]
which verifies \eqref{est:kernel} and thus \eqref{est:disp}.
\end{proof}}%
{%
}%

As a counterpart of the bilinear Strichartz estimate of Ozawa and Tsutsumi \cite[Theorem~2~(1)]{OT98}, we have the following short-time bilinear Strichartz estimate on $\mathbb{T}$.
A Fourier analytic proof was given in \cite{MV08}, which we will recall below for completeness.
\begin{lemma}\label{lem:bs0}
For $K\geq 1$ and $\delta >0$, we have 
\begin{equation}\label{est:bs0+}
\big\| P_K\big( e^{it\partial_x^2}\phi_1\overline{e^{it\partial_x^2}\phi_2}\big) \big\|_{L^2([0,\delta ];L^2(\mathbb{T}))}\lesssim \Big( \frac{1+K\delta}{K}\Big) ^{1/2}\| \phi_1\|_{L^2}\| \phi _2\|_{L^2}.
\end{equation}
In particular, if $N_1\gg N_2\geq 1$ and $\phi_1,\phi_2$ satisfy $\mathrm{supp}\,(\hat{\phi}_j)\subset \mathcal{I}_{N_j}$, then for $0<\delta \lesssim N_1^{-1}$ we have
\begin{equation}\label{est:bs0}
\big\| e^{it\partial_x^2}\phi_1\overline{e^{it\partial_x^2}\phi_2}\big\|_{L^2([0,\delta ];L^2(\mathbb{T}))}\lesssim N_1^{-1/2}\| \phi_1\|_{L^2}\| \phi _2\|_{L^2}.
\end{equation}
\end{lemma}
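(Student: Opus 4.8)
The plan is to prove \eqref{est:bs0+} by a direct computation with Fourier series on $\mathbb{T}=\mathbb{R}/2\pi\mathbb{Z}$, and then to deduce \eqref{est:bs0} from it by an elementary frequency-support argument. Write $\phi_j=\sum_{\xi\in\mathbb{Z}}a_j(\xi)e^{i\xi x}$, so that $e^{it\partial_x^2}\phi_j=\sum_\xi a_j(\xi)e^{i\xi x-it\xi^2}$ and $\|\phi_j\|_{L^2(\mathbb{T})}\sim\|a_j\|_{\ell^2}$. Expanding the product, collecting the output frequency $\eta=\xi_1-\xi_2$, and using $(\xi+\eta)^2-\xi^2=2\eta\xi+\eta^2$, the $\eta$-th spatial Fourier coefficient of $P_K\big(e^{it\partial_x^2}\phi_1\overline{e^{it\partial_x^2}\phi_2}\big)$ equals
\[
c(t,\eta)=\psi_K(\eta)\,e^{-it\eta^2}\sum_{\xi\in\mathbb{Z}}a_1(\xi+\eta)\overline{a_2(\xi)}\,e^{-2it\eta\xi}.
\]
By Plancherel in $x$ and Tonelli,
\[
\big\| P_K\big(e^{it\partial_x^2}\phi_1\overline{e^{it\partial_x^2}\phi_2}\big)\big\|_{L^2([0,\delta];L^2(\mathbb{T}))}^2\sim\sum_{\eta\in\mathbb{Z}}\int_0^\delta|c(t,\eta)|^2\,dt,
\]
so everything reduces to estimating, for each fixed $\eta$, the time integral of a trigonometric sum whose frequencies $\{-2\eta\xi:\xi\in\mathbb{Z}\}$ are $2|\eta|$-separated.

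The key step — and the main technical ingredient — is the elementary large-sieve-type inequality: if $\{\lambda_m\}\subset\mathbb{R}$ satisfies $|\lambda_m-\lambda_{m'}|\geq\nu$ for $m\neq m'$, then $\int_0^\delta\big|\sum_m d_m e^{i\lambda_m t}\big|^2\,dt\lesssim(\delta+\nu^{-1})\sum_m|d_m|^2$. I would prove this by majorizing $\chi_{[0,\delta]}$ by a smooth bump $\rho_\delta$ with $|\widehat{\rho_\delta}(\tau)|\lesssim\delta(1+\delta|\tau|)^{-2}$, expanding the resulting quadratic form, and running Schur's test using the separation. Applying this with $\lambda_\xi=-2\eta\xi$, $\nu=2|\eta|$, and $d_\xi=a_1(\xi+\eta)\overline{a_2(\xi)}$ gives, for $\eta\neq0$,
\[
\int_0^\delta|c(t,\eta)|^2\,dt\lesssim\psi_K(\eta)^2\big(\delta+(2|\eta|)^{-1}\big)\sum_{\xi}|a_1(\xi+\eta)|^2\,|a_2(\xi)|^2.
\]
On $\mathrm{supp}\,\psi_K$ one has $|\eta|\gtrsim K$, so $\delta+(2|\eta|)^{-1}\lesssim K^{-1}+\delta=\frac{1+K\delta}{K}$; summing first over $\eta$ (with $\sum_\eta|a_1(\xi+\eta)|^2\le\|a_1\|_{\ell^2}^2$) and then over $\xi$ yields the bound $\frac{1+K\delta}{K}\|a_1\|_{\ell^2}^2\|a_2\|_{\ell^2}^2$. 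The remaining contribution $\eta=0$ is present only for $K=1$ (as $\psi_K(0)=0$ for $K\geq2$), where $\int_0^\delta|c(t,0)|^2\,dt=\delta\,\psi_1(0)^2\big|\sum_\xi a_1(\xi)\overline{a_2(\xi)}\big|^2\le\delta\|a_1\|_{\ell^2}^2\|a_2\|_{\ell^2}^2$ by Cauchy--Schwarz, which is again $\lesssim\frac{1+K\delta}{K}\|a_1\|_{\ell^2}^2\|a_2\|_{\ell^2}^2$. Combining the two cases and taking square roots proves \eqref{est:bs0+}.

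Finally, for \eqref{est:bs0}: under $N_1\gg N_2$ and $\mathrm{supp}\,\hat\phi_j\subset\mathcal{I}_{N_j}$, the spatial Fourier support of $e^{it\partial_x^2}\phi_1\overline{e^{it\partial_x^2}\phi_2}$ lies in $\mathcal{I}_{N_1}-\mathcal{I}_{N_2}\subset\{\eta:|\eta|\sim N_1\}$, so this product equals $\sum_{K\sim N_1}P_K\big(e^{it\partial_x^2}\phi_1\overline{e^{it\partial_x^2}\phi_2}\big)$, a sum of $O(1)$ terms. Applying \eqref{est:bs0+} to each such $K$ with $0<\delta\lesssim N_1^{-1}\sim K^{-1}$, so that $\frac{1+K\delta}{K}\lesssim K^{-1}\sim N_1^{-1}$, and summing the $O(1)$ contributions gives \eqref{est:bs0}. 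The only places requiring genuine care are the large-sieve inequality and the separate (and elementary) treatment of the $\eta=0$, i.e.\ $K=1$, term; the rest is bookkeeping of Fourier coefficients.
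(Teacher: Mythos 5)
Your argument is correct and reaches the same conclusion, but by a genuinely different route than the paper's proof of Lemma~\ref{lem:bs0}. After the common first steps (Fourier-expand the product, collect by output frequency $\eta$, Plancherel in $x$), the paper exploits the fact that for each fixed $\eta$ the time-frequencies $\{-2\eta\xi : \xi \in \mathbb{Z}\}$ form a rescaled copy of $\mathbb{Z}$: it performs the change of variables $t' = -2\eta t$, uses that $\sum_{\xi} \hat{\phi}_1(\xi+\eta)\overline{\hat{\phi}_2(\xi)}e^{i\xi t'}$ is $2\pi$-periodic in $t'$ to bound the integral over $[0,|2\eta\delta|]$ by $O(1+K\delta)$ copies of the integral over one period, and then applies Plancherel in $t'$ over that period. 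You instead invoke a large-sieve inequality — if $\{\lambda_m\}$ is $\nu$-separated then $\int_0^\delta |\sum_m d_m e^{i\lambda_m t}|^2\,dt \lesssim (\delta+\nu^{-1})\sum_m |d_m|^2$ — with $\nu = 2|\eta|$, proved by a smooth majorant of $\chi_{[0,\delta]}$ and Schur's test. Both give the same $\frac{1+K\delta}{K}$ factor. The paper's argument is shorter here because the arithmetic-progression structure makes the periodicity trick immediate; your large-sieve version is more robust (it does not use the lattice structure of the frequencies, so it transfers without change to the line), at the modest cost of having to prove the sieve inequality. Your separate treatment of $\eta=0$ (the only exceptional frequency in $\mathrm{supp}\,\psi_1$) by Cauchy--Schwarz plays the role of the paper's separate handling of $K=1$ by H\"older and Bernstein, and the deduction of \eqref{est:bs0} from \eqref{est:bs0+} via $|\eta|\sim N_1$ and an $O(1)$ dyadic sum is the same as the paper's.
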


\begin{remark}\label{rem:bs}
The latter estimate \eqref{est:bs0} clearly holds regardless of the complex conjugation, while this is not the case for the former estimate \eqref{est:bs0+}.
For the product without conjugation of two functions of comparable frequencies, we can deduce, for instance, the following result from \eqref{est:bs0+}:
if $|\xi_1-\xi_2|\sim K$ for any $\xi_j\in \mathrm{supp}\,(\hat{\phi}_j)$, $j=1,2$, then 
\[ \big\| e^{it\partial_x^2}\phi_1e^{it\partial_x^2}\phi_2 \big\|_{L^2([0,\delta ];L^2(\mathbb{T}))}\lesssim \Big( \frac{1+K\delta}{K}\Big) ^{1/2}\| \phi_1\|_{L^2}\| \phi _2\|_{L^2}.\]
\choice%
{In particular, if $N_1\sim N_2$, $\mathrm{supp}\,(\hat{\phi}_1)\subset \mathcal{I}_{N_1}\cap \mathbb{R}_+$ and $\mathrm{supp}\,(\hat{\phi}_2)\subset \mathcal{I}_{N_2}\cap \mathbb{R}_-$, then for $0<\delta \lesssim N_1^{-1}$ we have
\[ \big\| e^{it\partial_x^2}\phi_1e^{it\partial_x^2}\phi_2 \big\|_{L^2([0,\delta ];L^2(\mathbb{T}))}\lesssim N_1^{-1/2}\| \phi_1\|_{L^2}\| \phi _2\|_{L^2}.\]}%
{%
}%
\end{remark}

\begin{remark}\label{rem:V2}
As for the $L^6$ Strichartz estimate, from \eqref{est:bs0+} and Lemma~\ref{lem:UpVp} (iii) we immediately obtain the corresponding bilinear estimates in $U^2_\Delta$: for $I=[a,b)$ with $|I|\lesssim K^{-1}$ we have
\[ \| P_K(u\bar{v})\|_{L^2(I;L^2(\mathbb{T}))}\lesssim K^{-1/2}\| \chi_Iu\|_{U^2_\Delta (\mathbb{R})}\| \chi_Iv\| _{U^2_\Delta (\mathbb{R})}.\]
A similar extension is valid also for Lemmas~\ref{lem:bs1} and \ref{lem:bs2} below.
On the other hand, by the Bernstein and H\"older inequalities and the assumption $|I|\lesssim K^{-1}$, together with the embedding $U^p_\Delta \hookrightarrow L^\infty L^2$, we have
\[ \| P_K(u\bar{v})\|_{L^2(I;L^2(\mathbb{T}))}\lesssim K^{1/2}|I|^{1/2}\| u\bar{v}\|_{L^\infty(\mathbb{R};L^1(\mathbb{T}))}\lesssim \| u\|_{U^2_\Delta(\mathbb{R})}\| v\|_{U^4_\Delta(\mathbb{R})}.\]
By applying Lemma~\ref{lem:UpVp}~(iv) to the operator $v\mapsto P_K(u\bar{v})$ with these estimates, we have
\[ \| P_K(u\bar{v})\|_{L^2(I;L^2(\mathbb{T}))}\lesssim K^{-1/2}(1+\log K)\| \chi_Iu\|_{U^2_\Delta (\mathbb{R})}\| \chi_Iv\| _{V^2_\Delta (\mathbb{R})}\]
for $u,v:I\to L^2$ such that $\chi_Iu\in U^2_\Delta(\mathbb{R})$ and $\chi_Iv\in V^2_{-,rc,\Delta}(\mathbb{R})$.
\end{remark}

\begin{proof}[Proof of Lemma~\ref{lem:bs0}]
If $K=1$, the claim follows from the H\"older inequality in $t$ and the Bernstein inequality in $x$.

Assume $K>1$.
We observe that
\begin{align*}
P_K\big( e^{it\partial_x^2}\phi_1\overline{e^{it\partial_x^2}\phi_2}\big) &=\sum _{n_1,n_2}e^{i(n_1-n_2)x}e^{i(-n_1^2+n_2^2)t}\psi_K(n_1-n_2)\hat{\phi}_1(n_1)\overline{\hat{\phi}_2(n_2)}\\
&=\sum _ne^{inx}\psi_K(n)e^{-in^2t}\sum _{n_2}\hat{\phi}_1(n+n_2)\overline{\hat{\phi}_2(n_2)}e^{-2inn_2t}.
\end{align*}
By the Plancherel theorem and the change of variable $t'=-2nt$, we have
\begin{align*}
&\big\| P_K\big( e^{it\partial_x^2}\phi_1\overline{e^{it\partial_x^2}\phi_2}\big) \big\|_{L^2([0,\delta ];L^2(\mathbb{T}))}^2\\
&=\int _0^\delta \sum _n|\psi_K(n)|^2 \Big| \sum _{n_2}\hat{\phi}_1(n+n_2)\overline{\hat{\phi}_2(n_2)}e^{-2inn_2t}\Big| ^2\,dt\\
&=\sum _n|\psi_K(n)|^2 \frac{1}{-2n}\int _0^{-2n\delta}\Big| \sum _{n_2}\hat{\phi}_1(n+n_2)\overline{\hat{\phi}_2(n_2)}e^{in_2t'}\Big| ^2\,dt'\\
&\lesssim \sum _n\frac{1+K\delta}{K}\int _0^{2\pi}\Big| \sum _{n_2}\hat{\phi}_1(n+n_2)\overline{\hat{\phi}_2(n_2)}e^{in_2t'}\Big| ^2\,dt'.
\end{align*}
Since the last term is equal to $\frac{1+K\delta}{K}\| \phi_1\|_{L^2}^2\| \phi _2\|_{L^2}^2$ by the Plancherel theorem, the claimed estimate follows.
\end{proof}

In order to deal with the nonlinearity of \eqref{kdnls} including the Hilbert transformation, we prepare the next two lemmas.
These estimates can be shown in the same manner as Lemma~\ref{lem:bs0}.
\begin{lemma}\label{lem:bs1}
Let $\phi _1,\phi _2,\phi _3\in L^2(\mathbb{T})$ satisfy $\mathrm{supp}\,(\hat{\phi}_j)\subset \mathcal{I}_{N_j}$, and assume that $N_1\gg N_2,N_3$.
Then, for $0<\delta \lesssim N_1^{-1}$ we have 
\[ \big\| e^{it\partial_x^2}\phi_1\cdot H\big( e^{it\partial_x^2}\phi_2\overline{e^{it\partial_x^2}\phi_3}\big) \big\|_{L^2([0,\delta ];L^2(\mathbb{T}))}\lesssim \Big( \frac{N_2\wedge N_3}{N_1}\Big) ^{1/2}\| \phi_1\|_{L^2}\| \phi _2\|_{L^2}\| \phi _3\|_{L^2}.\]
The same estimate holds if $e^{it\partial_x^2}\phi_1$ is replaced by $\overline{e^{it\partial_x^2}\phi_1}$, and also if $H$ is replaced by any Fourier multiplier with bounded symbol (such as $P_{\leq N}$).
\end{lemma}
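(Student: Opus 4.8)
The plan is to follow the Fourier-analytic proof of Lemma~\ref{lem:bs0}. Since the only property of $H$ that enters is the boundedness of its symbol, it suffices to prove the estimate with $H$ replaced by a Fourier multiplier $a(D)$ with $\sup_{m\in\mathbb{Z}}|a(m)|\le1$; the case $P_{\le N}$ is then included, and the variant with $\overline{e^{it\partial_x^2}\phi_1}$ in place of $e^{it\partial_x^2}\phi_1$ follows by complex conjugation (using $\overline{a(D)g}=b(D)\bar g$ with $b(m):=\overline{a(-m)}$, $\sup|b|\le1$) together with the relabelling $\phi_2\leftrightarrow\phi_3$. Write $u_j:=e^{it\partial_x^2}\phi_j$ and $w:=u_1\cdot a(D)(u_2\overline{u_3})$.

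Expanding all three evolutions in Fourier series on $\mathbb{T}$ and parametrising the output frequency $n$ together with the frequency $m=m_2-m_3$ transferred through $a(D)$, the phase factorises as $n_1^2+m_2^2-m_3^2=n^2-2nm+2mm_2$, which for fixed $n,m$ is \emph{linear} in the remaining free variable $m_2$. Hence, setting $C_m(t):=\sum_{m_2}e^{-2imm_2t}\hat\phi_2(m_2)\overline{\hat\phi_3(m_2-m)}$ (so that $|C_m(t)|=|\widehat{u_2\overline{u_3}}(t,m)|$ and $\sum_m|C_m(t)|^2\sim\|(u_2\overline{u_3})(t)\|_{L^2_x}^2$),
\[ \hat w(t,n)=e^{-in^2t}\sum_m a(m)\,e^{2inmt}\,\hat\phi_1(n-m)\,C_m(t),\qquad |n|\sim N_1 .\]
By the Plancherel theorem in $x$, $\|w\|_{L^2([0,\delta];L^2)}^2=\int_0^\delta\sum_n|\hat w(t,n)|^2\,dt$. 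I would expand $|\hat w(t,n)|^2$ and split the double sum over $m$ into its diagonal and off-diagonal parts.

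The diagonal part equals $\int_0^\delta\|\phi_1\|_{L^2}^2\sum_m|a(m)|^2|C_m(t)|^2\,dt\lesssim\|\phi_1\|_{L^2}^2\int_0^\delta\|(u_2\overline{u_3})(t)\|_{L^2_x}^2\,dt$; applying the Bernstein inequality to whichever of $u_2,u_3$ has the lower frequency gives $\|(u_2\overline{u_3})(t)\|_{L^2_x}\lesssim(N_2\wedge N_3)^{1/2}\|\phi_2\|_{L^2}\|\phi_3\|_{L^2}$ for every $t$, and together with $\delta\lesssim N_1^{-1}$ this already produces exactly the asserted bound. For the off-diagonal part the gain must come from the fast oscillation $e^{2inmt}$ (the transferred frequencies satisfy $|m|\lesssim N_2\vee N_3\ll|n|\sim N_1$), and here the argument parallels Lemma~\ref{lem:bs0}: for each fixed $n$ I would rescale $t'=2nt$, so that $|t'|\le2|n|\delta\lesssim1$ and $e^{2inmt}=e^{imt'}$ becomes a Fourier character in $t'$, then estimate the resulting integral over (part of) a period by near-orthogonality of the characters $e^{imt'}$, using the counting bound $\#\{m_2:m_2\in\mathcal{I}_{N_2},\,m_2-m\in\mathcal{I}_{N_3}\}\lesssim N_2\wedge N_3$ to produce the extra factor $(N_2\wedge N_3)^{1/2}$ and to absorb the cross terms into the same $(N_2\wedge N_3)/N_1$ bound.

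I expect the off-diagonal estimate to be the main obstacle. Unlike Lemma~\ref{lem:bs0}, there are now two transferred low frequencies but only one time variable, so after the rescaling $t'=2nt$ the factor $C_m$ still carries a residual phase $e^{-imm_2t'/n}$, of size $\lesssim(N_2\vee N_3)(N_2\wedge N_3)/N_1$, which is not a Fourier character in $t'$; hence Plancherel in $t'$ is not directly available and one must run a quantitative almost-orthogonality argument (splitting, if convenient, according to whether this residual phase is $\lesssim1$, and decomposing dyadically in $|m-m'|$) and verify that it costs no logarithmic loss in $N_1,N_2,N_3$. The remaining assertions of the lemma follow from exactly the same computation, since only $|a(m)|\le1$ was used and the phase identity for $\overline{u_1}$, namely $-n_1^2+m_2^2-m_3^2=-n^2+2nm-2mm_2$, is still linear in $m_2$ and is handled identically.
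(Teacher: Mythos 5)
Your diagonal computation is correct and already produces the desired bound, but the off-diagonal piece — which, as you say, is ``the main obstacle'' — is a genuine gap, and I don't think the plan you sketch is going to close it. The core problem is exactly the residual phase you flag: after fixing $(n,m)$, the oscillation $e^{-2imm_2 t}$ has coefficient of size $|m|\lesssim N_2\vee N_3$, which is small, so rescaling by $t'=2nt$ does not turn it into a Fourier character and rescaling by $t'=2mt$ loses the crucial $N_1^{-1}$ gain. A quantitative almost-orthogonality argument in $m,m'$ with dyadic splitting would, at best, be delicate, and in the worst regimes (e.g.\ $(N_2\vee N_3)^2\gtrsim N_1$) the oscillation $e^{2in(m-m')t}$ does not clearly dominate the phase of $C_m(t)\overline{C_{m'}(t)}$, so it is not evident that log losses can be avoided. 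A trivial Cauchy--Schwarz bound on the off-diagonal part is off by a factor of $N_2\vee N_3$.

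The paper avoids this entirely by parametrising by the output frequency $n$ and the frequency $n_2$ of $\phi_2$ (not the transferred frequency $m=m_2-m_3$). Assuming WLOG $N_2\le N_3$, the $n_2$-sum ranges over $\lesssim N_2$ values, and Minkowski pulls it outside the $L^2_t\ell^2_n$ norm at a cost of $N_2^{1/2}$ by Cauchy--Schwarz. For each fixed $(n,n_2)$, the remaining sum over $n_3$ has a phase $e^{-2in_3(n-n_2)t}$ that is \emph{linear in $n_3$ with coefficient} $|n-n_2|\sim N_1$; changing variables $t'=-2(n-n_2)t$ gives a factor $N_1^{-1}$, and since $|2(n-n_2)\delta|\lesssim 2\pi$ one may enlarge the $t'$-integral to a full period and apply Plancherel cleanly, with no off-diagonal terms to worry about. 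In short: fix the \emph{low} frequency, not the transferred one, so that the Plancherel variable's phase coefficient is $\sim N_1$. You may want to rework the proof along these lines rather than chasing the almost-orthogonality route.
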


\begin{proof}
Since $\overline{H(u\bar{v})}=H(\bar{u}v)$, we may assume $N_2\leq N_3$.
We observe that
\begin{align*}
&e^{it\partial_x^2}\phi_1\cdot H\big( e^{it\partial_x^2}\phi_2\overline{e^{it\partial_x^2}\phi_3}\big) \\
&=\sum _{n_1,n_2,n_3}e^{i(n_1+n_2-n_3)x}e^{i(-n_1^2-n_2^2+n_3^2)t}(-i)\mathrm{sgn}(n_2-n_3)\hat{\phi}_1(n_1)\hat{\phi}_2(n_2)\overline{\hat{\phi}_3(n_3)}\\
&=-i\sum _ne^{inx}\sum _{n_2}e^{-i(n_2^2+(n-n_2)^2)t}\hat{\phi}_2(n_2)\sum _{n_3}\mathrm{sgn}(n_2-n_3)\hat{\phi}_1(n-n_2+n_3)\overline{\hat{\phi}_3(n_3)}e^{-2in_3(n-n_2)t},
\end{align*}
and hence
\begin{align*}
&\big\| e^{it\partial_x^2}\phi_1\cdot H\big( e^{it\partial_x^2}\phi_2\overline{e^{it\partial_x^2}\phi_3}\big) \big\|_{L^2([0,\delta ];L^2(\mathbb{T}))}^2\\
&=\int _0^\delta \sum _n\bigg| \sum _{n_2}e^{-i(n_2^2+(n-n_2)^2)t}\hat{\phi}_2(n_2)\sum _{n_3}\mathrm{sgn}(n_2-n_3)\hat{\phi}_1(n-n_2+n_3)\overline{\hat{\phi}_3(n_3)}e^{-2in_3(n-n_2)t}\bigg| ^2\,dt\\
&\leq \int _0^\delta \sum _n\bigg[ \sum _{n_2}|\hat{\phi}_2(n_2)|\Big| \sum _{n_3}\mathrm{sgn}(n_2-n_3)\hat{\phi}_1(n-n_2+n_3)\overline{\hat{\phi}_3(n_3)}e^{-2in_3(n-n_2)t}\Big| \bigg] ^2\,dt\\
&\leq \bigg[ \sum _{n_2}|\hat{\phi}_2(n_2)|\bigg( \sum _n\int _0^\delta \Big| \sum _{n_3}\mathrm{sgn}(n_2-n_3)\hat{\phi}_1(n-n_2+n_3)\overline{\hat{\phi}_3(n_3)}e^{-2in_3(n-n_2)t}\Big| ^2\,dt\bigg) ^{1/2}\bigg] ^2,
\end{align*}
where at the last step we have used the Minkowski inequality to replace the $L^2_t\ell^2_n\ell^1_{n_2}$ norm with $\ell^1_{n_2}\ell^2_nL^2_t$.
Now, for fixed $|n|\sim {N_1}$ and $n_2\in \mathcal{I}_{N_2}$, we have $|2(n-n_2)\delta|\lesssim 2\pi$, and thus
\begin{align*}
&\int _0^\delta \Big| \sum _{n_3}\mathrm{sgn}(n_2-n_3)\hat{\phi}_1(n-n_2+n_3)\overline{\hat{\phi}_3(n_3)}e^{-2in_3(n-n_2)t}\Big| ^2\,dt\\
&=\frac{1}{-2(n-n_2)}\int _0^{-2(n-n_2)\delta}\Big| \sum _{n_3}\mathrm{sgn}(n_2-n_3)\hat{\phi}_1(n-n_2+n_3)\overline{\hat{\phi}_3(n_3)}e^{in_3t'}\Big| ^2\,dt' \\
&\lesssim N_1^{-1}\int _0^{2\pi}\Big| \sum _{n_3}\mathrm{sgn}(n_2-n_3)\hat{\phi}_1(n-n_2+n_3)\overline{\hat{\phi}_3(n_3)}e^{in_3t'}\Big| ^2\,dt'\\
&\lesssim N_1^{-1}\sum _{n_3}\big| \hat{\phi}_1(n-n_2+n_3)\big| ^2\big| \hat{\phi}_3(n_3)\big| ^2.
\end{align*}
Hence, we have
\begin{align*}
&\big\| e^{it\partial_x^2}\phi_1\cdot H\big( e^{it\partial_x^2}\phi_2\overline{e^{it\partial_x^2}\phi_3}\big) \big\|_{L^2([0,\delta ];L^2(\mathbb{T}))}^2\\
&\lesssim \bigg[ \sum _{n_2}|\hat{\phi}_2(n_2)|\bigg( \sum _nN_1^{-1}\sum _{n_3}\big| \hat{\phi}_1(n-n_2+n_3)\big| ^2\big| \hat{\phi}_3(n_3)\big| ^2\bigg) ^{1/2}\bigg] ^2\\
&\lesssim N_2\sum _{n_2}|\hat{\phi}_2(n_2)|^2\sum _nN_1^{-1}\sum _{n_3}\big| \hat{\phi}_1(n-n_2+n_3)\big| ^2\big| \hat{\phi}_3(n_3)\big| ^2\\
&\lesssim N_1^{-1}N_2\| \phi_1\|_{L^2}^2\| \phi _2\|_{L^2}^2\| \phi _3\|_{L^2}^2,
\end{align*}
as desired.
\end{proof}

\begin{lemma}\label{lem:bs2}
Let $\phi _1,\phi _2,\phi _3\in L^2(\mathbb{T})$ satisfy $\mathrm{supp}\,(\hat{\phi}_j)\subset \mathcal{I}_{N_j}$, and assume that $N_1\sim N_2\gg N_3$.
Further, assume $K\ll N_1$.
Then, for $0<\delta \lesssim N_1^{-1}$ we have 
\[ \big\| HP_{\leq K}\big( e^{it\partial_x^2}\phi_1\overline{e^{it\partial_x^2}\phi_2}\big) e^{it\partial_x^2}\phi_3 \big\|_{L^2([0,\delta ];L^2(\mathbb{T}))}\lesssim \Big( \frac{K}{N_1}\Big) ^{1/2}\| \phi_1\|_{L^2}\| \phi _2\|_{L^2}\| \phi _3\|_{L^2}.\]
The same estimate holds if $e^{it\partial_x^2}\phi_3$ is replaced by $\overline{e^{it\partial_x^2}\phi_3}$, and also if $H$ is replaced by any Fourier multiplier with bounded symbol.
\end{lemma}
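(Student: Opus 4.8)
The plan is to argue in the same manner as for Lemma~\ref{lem:bs1}, but with one structural change: here the output frequency of the trilinear expression is \emph{low} (of size $\lesssim K+N_3\ll N_1$), so the decisive $N_1^{-1/2}$ gain cannot come from the output frequency as in Lemma~\ref{lem:bs1}; it must instead be extracted from the high frequency $N_1$ by choosing the right pair of summation variables.

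First I would Fourier-expand $HP_{\leq K}\big(e^{it\partial_x^2}\phi_1\overline{e^{it\partial_x^2}\phi_2}\big)e^{it\partial_x^2}\phi_3$ in $x$. By Plancherel it is enough to bound $\sum_n\int_0^\delta|c_n(t)|^2\,dt$, where $c_n(t)$ is the $n$-th Fourier coefficient, a sum over $n_1,n_2,n_3$ with $n_1-n_2+n_3=n$. Eliminating $n_2=n_1+n_3-n$, the modulation factorizes as $-n_1^2+n_2^2-n_3^2=(n^2-2nn_1)+2(n_1-n)n_3$, so that $c_n(t)=\sum_{n_1}\hat\phi_1(n_1)e^{i(n^2-2nn_1)t}A_{n,n_1}(t)$, where
\[ A_{n,n_1}(t):=\sum_{n_3}\mu(n-n_3)\,m(n-n_3)\,\overline{\hat\phi_2(n_1+n_3-n)}\,\hat\phi_3(n_3)\,e^{2i(n_1-n)n_3t},\]
$m$ denoting the symbol of $P_{\leq K}$ (supported in $|\xi|\lesssim K$, $|m|\leq1$) and $\mu(\xi)=-i\,\mathrm{sgn}(\xi)$ (for the general statement, $\mu$ is any bounded symbol). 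The point to record is that a nonvanishing summand forces $|n_1|\sim N_1$, $|n_3|\sim N_3$ and $|n-n_3|=|n_1-n_2|\lesssim K$, hence $|n|\lesssim K+N_3\ll N_1$ and therefore $|n_1-n|\sim N_1$.

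Next I would dispose of the high variable $n_1$ by a \emph{plain} Cauchy--Schwarz in $n_1$, pairing $\hat\phi_1(n_1)$ against $A_{n,n_1}(t)$ (this is exactly where one avoids a loss of $N_1^{1/2}$): $|c_n(t)|^2\leq\|\phi_1\|_{L^2}^2\sum_{n_1}|A_{n,n_1}(t)|^2$, so it remains to show $\sum_{n_1,n}\int_0^\delta|A_{n,n_1}(t)|^2\,dt\lesssim (K/N_1)\|\phi_2\|_{L^2}^2\|\phi_3\|_{L^2}^2$. For each $n_1\neq n$ (automatic) I substitute $t'=2(n_1-n)t$; since $|n_1-n|\delta\lesssim N_1\delta\lesssim1$, the $t'$-integral runs over an interval of length $\lesssim 2\pi$, so it may be extended to a full period and evaluated by Plancherel in $t'$, giving $\int_0^\delta|A_{n,n_1}(t)|^2\,dt\lesssim\frac1{|n_1-n|}\sum_{n_3}|m(n-n_3)|^2|\hat\phi_2(n_1+n_3-n)|^2|\hat\phi_3(n_3)|^2\lesssim\frac1{N_1}\sum_{n_3}(\cdots)$. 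Summing over $n$ via the change of index $\nu=n-n_3$ (so $n_1+n_3-n=n_1-\nu$), then over $n_1$ and $n_3$, yields
\[ \sum_{n_1,n}\int_0^\delta|A_{n,n_1}(t)|^2\,dt\lesssim\frac1{N_1}\Big(\sum_{n_3}|\hat\phi_3(n_3)|^2\Big)\Big(\sum_\nu|m(\nu)|^2\sum_{n_1}|\hat\phi_2(n_1-\nu)|^2\Big)=\frac1{N_1}\|\phi_3\|_{L^2}^2\|\phi_2\|_{L^2}^2\sum_\nu|m(\nu)|^2,\]
and $\sum_\nu|m(\nu)|^2\lesssim K$ because the symbol of $P_{\leq K}$ is supported on $\lesssim K$ integers. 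Combining the three displays and taking the square root gives the claimed bound. The variant with $\overline{e^{it\partial_x^2}\phi_3}$ follows by the obvious relabelling $n_3\mapsto-n_3$ and $\hat\phi_3(n_3)\mapsto\overline{\hat\phi_3(-n_3)}$, and the variant with a general bounded-symbol multiplier in place of $H$ only affects $\mu$, which enters solely through $|\mu|\lesssim1$.

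\textbf{Main obstacle.} The delicate point is the organization of the trilinear sum, not any single computation. In Lemma~\ref{lem:bs1} the two ``small'' frequencies were $N_2,N_3$ and one pulled an $\ell^1$-sum over the smaller of them; here the analogous move --- pulling an $\ell^1$-sum over $n_3$ --- would cost a factor $N_3^{1/2}$, which is not allowed since we must reach exactly $(K/N_1)^{1/2}$. The resolution is to keep $n_3$ as the variable carrying the oscillation (its phase coefficient $2(n_1-n)$ is $\sim N_1$, whereas the $P_{\leq K}$-controlled difference $n_1-n_2$ has a \emph{small} coefficient and is useless for orthogonality) and to remove $n_1$ by Cauchy--Schwarz against $\hat\phi_1(n_1)$, so that the only surviving combinatorial cost is the number $\sim K$ of frequencies in the support of the $P_{\leq K}$-symbol.
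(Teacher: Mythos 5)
Your proof is correct, but it takes a genuinely different route from the paper. The paper's proof of this lemma is a two-line sketch: decompose $\phi_1,\phi_2$ into frequency intervals of length $K$ by an almost-orthogonality argument, and then run the argument of Lemma~\ref{lem:bs1} (pull out the small-width factor in $\ell^1$ via Minkowski, change variables $t'$, Plancherel). You instead give a single self-contained computation: parametrize $\Gamma_4$ so that the modulation is \emph{linear} in $n_3$ with a large coefficient $2(n_1-n)\sim N_1$, remove $n_1$ by a plain Cauchy--Schwarz against $\hat\phi_1$ (rather than Minkowski), then extract $N_1^{-1}$ from the Plancherel in $t'$ and the factor $K$ from $\sum_\nu|m(\nu)|^2\lesssim K$, the $\ell^2$-mass of the $P_{\leq K}$-symbol. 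This is a clean way to avoid the almost-orthogonality reduction: the Cauchy--Schwarz in $n_1$ preserves the $\ell^2$ structure that would otherwise have to be recovered from summing the frequency-localized pieces, and the $K^{1/2}$ emerges from the symbol rather than from the block width. Your closing remark correctly identifies the key organizational choice: since the phase coefficient of $n_1$ (after eliminating $n_2$) is only $2(n_3-n)=O(K)$, a Plancherel in $n_1$ would not see enough oscillation on a time interval of length $N_1^{-1}$, so $n_1$ must be removed by Cauchy--Schwarz and $n_3$ must carry the oscillation.

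One small imprecision: the variant with $\overline{e^{it\partial_x^2}\phi_3}$ does \emph{not} follow by merely relabelling $n_3\mapsto -n_3$ and $\hat\phi_3(n_3)\mapsto\overline{\hat\phi_3(-n_3)}$. The conjugation flips the sign of the $n_3^2$ contribution to the modulation ($-n_3^2\to+n_3^2$), and with your elimination $n_2=n_1+n_3-n$ this destroys the cancellation that made the phase linear in $n_3$; one instead obtains a genuinely quadratic phase $2n_3^2+2(n-n_1)n_3$, for which the change of variables $t'=2(n_1-n)t$ does not turn $A_{n,n_1}$ into a Fourier series. The fix is symmetric and routine: eliminate $n_1=n+n_2+n_3$ instead, so that the $n_3^2$ coming from $-(n+n_2+n_3)^2$ cancels the $+n_3^2$ from the conjugated $\phi_3$, leaving the linear phase $-2(n+n_2)n_3$ with $|n+n_2|\sim N_1$, and then apply Cauchy--Schwarz in $n_2$ (the other high variable) rather than $n_1$. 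With this adjustment the rest of your argument carries over verbatim.
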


\begin{proof}
By an almost orthogonality argument, we can restrict the frequencies of $\phi_1$ and $\phi_2$ onto intervals of length $K$.
Then, the same argument as for the preceding lemma can be used.
\end{proof}

\begin{remark}\label{rem:nonperiodic}
We note that all the above short-time $L^6$ and bilinear Strichartz estimates (Lemmas~\ref{lem:L6}, \ref{lem:bs0}, \ref{lem:bs1}, and \ref{lem:bs2}) are true in the non-periodic case as well.
In fact, these estimates hold on $\mathbb{R}$ without restricting to a frequency-dependent short time interval (i.e., with the $L^6_{t,x}$ or $L^2_{t,x}$ norm over $\mathbb{R}\times \mathbb{R}$ on the left-hand side).
Concerning Lemmas~\ref{lem:bs1} and \ref{lem:bs2}, this can be shown by a slight modification of the proofs for the periodic estimates given above.
\end{remark}

%%%%%%%%%%%%%%%%%%%%%%%%%%%%%%%%%%%%%%%%%%
%     Section 3. Estimates on the nonlinearity
%%%%%%%%%%%%%%%%%%%%%%%%%%%%%%%%%%%%%%%%%%

\section{Trilinear estimate in the short-time norms}\label{sec:trilinear}

In this section, we shall prove the following trilinear estimate in the $G^s(T)$ norm.
\begin{proposition}\label{prop:trilinear}
For $s>1/4$ and $0<T\leq 1$, we have
\[ \big\| \partial_x(u_1\bar{u}_2u_3)\big\|_{G^s(T)}+\big\| \partial_x\big( H(u_1\bar{u}_2)u_3\big) \big\|_{G^s(T)}\lesssim \| u_1\|_{F^s(T)}\| u_2\|_{F^s(T)}\| u_3\|_{F^s(T)}. \]
\end{proposition}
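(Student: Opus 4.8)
\textbf{Proof strategy for Proposition~\ref{prop:trilinear}.}

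The plan is to perform a Littlewood--Paley decomposition of the three inputs $u_j = \sum_{N_j} P_{N_j} u_j$ and of the output $P_N$, reducing the trilinear estimate to a sum over dyadic frequencies $N, N_1, N_2, N_3$ of pieces of the form $P_N \partial_x (P_{N_1} u_1 \cdot \overline{P_{N_2} u_2} \cdot P_{N_3} u_3)$ and $P_N \partial_x (H(P_{N_1}u_1 \cdot \overline{P_{N_2}u_2}) P_{N_3} u_3)$. By the definition of the $G^s(T)$ norm, it suffices to bound each dyadic block in $G_N(T)$, i.e.\ in $DU^2_\Delta(I)$ over every subinterval $I \subset [0,T]$ with $|I| \le N^{-1}$, and then sum against $N^s$ in $\ell^2_N$. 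On each such short interval we use the duality characterization from Lemma~\ref{lem:UpVp}(ii): $\|f\|_{DU^2_\Delta(I)}$ is the supremum of $|\int_I \int f \bar v\, dx\, dt|$ over $v \in V^2_{rc,\Delta}(I)$ with $\|v\|_{V^2_\Delta(I)} \le 1$. After integrating the $\partial_x$ by parts onto $v$ (costing a factor $N$, the output frequency), we are left with estimating a quadrilinear spacetime integral $\int_I \int (P_{N_1}u_1)(\overline{P_{N_2}u_2})(P_{N_3}u_3) \overline{\partial_x P_N v}\, dx\, dt$, which we control by Hölder in $t,x$ combined with the short-time Strichartz tools: the $L^6_{t,x}$ estimate (Lemma~\ref{lem:L6}) applied to three of the four factors with $L^2_{t,x}$ on the fourth (or all four in $L^4_{t,x}$ via interpolation between $L^6$ and $L^\infty L^2$), and crucially the bilinear estimates (Lemmas~\ref{lem:bs0}, \ref{lem:bs1}, \ref{lem:bs2}, in the $U^2_\Delta$/$V^2_\Delta$ extended forms of Remarks~\ref{rem:V2}, \ref{rem:bs}) to pair a high-frequency factor with a lower-frequency one and gain a power of the largest frequency. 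The $F_{N_j}$ norms on the right and $\|v\|_{V^2_\Delta(I)} \le 1$ convert these Strichartz bounds, via the transfer principle (Lemma~\ref{lem:UpVp}(iii)) applied on the short interval $I$, into the stated trilinear bound.

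The case analysis splits according to the frequency interactions. In the \emph{high-high-high} regime $N_1 \sim N_2 \sim N_3 \sim N_{\max}$, where there is no gain from a single bilinear estimate, one uses the $L^6_{t,x}$ Strichartz bound on all factors together with the short interval length $|I| \lesssim N_{\max}^{-1}$; the derivative loss $N$ is then compensated by the length of $I$ and by the $\ell^2$-summability coming from $s > 1/4$ (this is the regularity-critical regime). In the \emph{high-low} regimes, where $N_{\max}$ appears in two of the factors and the remaining ones are much smaller, we apply a bilinear Strichartz estimate to the two high-frequency factors (for the $\alpha$-term, Lemma~\ref{lem:bs0}/Remark~\ref{rem:V2}; for the $\beta$-term, Lemmas~\ref{lem:bs1} or \ref{lem:bs2} depending on which factor carries the Hilbert transform and the output frequency), gaining $N_{\max}^{-1/2}$, which more than pays for the derivative $\partial_x$ together with the short-interval Bernstein/Hölder estimates on the low-frequency factor; here one has room to spare and may even close at $s > 0$. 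The $\beta$-term requires slightly more care because $H P_{\le K}$ sits inside the product in some configurations, but since $H$ and $P_{\le K}$ are bounded Fourier multipliers, the "same estimate holds" clauses in Lemmas~\ref{lem:bs1} and \ref{lem:bs2} cover exactly these situations.

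The main obstacle I anticipate is the high-high-high interaction at the endpoint regularity $s$ close to $1/4$: there the derivative gain $N$ must be balanced precisely against $|I|^{1/2} \lesssim N^{-1/2}$ from Hölder in time and a further $N^{-1/4-}$ worth of summability, and one must be careful that the output frequency $N$ need not be comparable to $N_{\max}$ (it can be much smaller due to cancellation in $n_1 - n_2 + n_3$). Handling the case $N \ll N_{\max}$ inside the high-high-high block — where the derivative loss is only $N \ll N_{\max}$, which is favorable, but the summation over the many dyadic scales $N_1 \sim N_2 \sim N_3$ with fixed output $N$ must be organized via an almost-orthogonality / Cauchy--Schwarz argument in the intermediate frequencies — is the delicate bookkeeping step. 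A secondary subtlety, specific to KDNLS and the $\beta$-term, is that when $H(u_1\bar u_2)$ has frequencies near zero one cannot use a bilinear estimate on $u_1, u_2$ to gain over $N_{\max}$ in the usual way; instead one pairs $u_3$ with $v$ (or $u_1$ with $u_3$) via Lemma~\ref{lem:bs1} with its roles permuted, exploiting that the lowest of the two "paired" frequencies governs the gain, and this is still enough because $s > 1/4$.
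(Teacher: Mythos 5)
Your plan and toolbox --- dyadic decomposition, the $DU^2_\Delta$ duality of Lemma~\ref{lem:UpVp}(ii), short-time $L^6$ and (modified) bilinear Strichartz estimates, and a case analysis on the frequency interaction --- are essentially the paper's. There is, however, a concrete gap in the regime where the output frequency $N$ is much smaller than $N_{\max}=N^*_1$. The intervals $I$ built into the $G_N(T)$ norm have length $\le N^{-1}$, whereas the short-time $L^6$ and bilinear estimates of Lemmas~\ref{lem:L6}--\ref{lem:bs2} require $|I|\lesssim N_{\max}^{-1}$, and the input $F_{N_j}(T)$ norms only control $U^2_\Delta$ behavior on sub-intervals of length $\le N_j^{-1}$. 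So one must first cut $I$ into $O(N_{\max}/N)$ sub-intervals of length $\le N_{\max}^{-1}$ before any of these estimates can be invoked; this multiplicity exactly cancels the ``favorable'' derivative factor $N$ you identify, leaving a net $N_{\max}$ to be absorbed. Your attribution of the difficulty in this regime to ``almost-orthogonality / Cauchy--Schwarz in the intermediate frequencies'' misidentifies the obstruction: in the block $N_1\sim N_2\sim N_3\sim N_{\max}\gg N$ the three inputs live on one dyadic scale, and summation in the dyadic parameters is closed routinely by the $(N^*_3)^{0-}$ margin in the bound $C(\underline{N})\lesssim (N_1N_2N_3/N^*_1)^{(1/4)+}$; the real issue is the time-scale mismatch, which your outline never addresses.

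Two smaller remarks. Your ``secondary subtlety'' paragraph conflates distinct configurations: when $N_1\sim N_2$ are both high so that $H(u_1\bar u_2)$ is low-frequency, the right tool is Lemma~\ref{lem:bs2}, not any permuted form of Lemma~\ref{lem:bs1}; separately, the dual pairing of $u_3$ with $\partial_x\bar v$ via Lemma~\ref{lem:bs0} and Remark~\ref{rem:V2} is what handles the case $N_1\sim N_3$ both high (with $u_2$ low), where a bilinear estimate on $u_1u_3$ fails because the Fourier supports can overlap. Finally, the aside that the high-low regimes ``may even close at $s>0$'' overstates the available room: the $(N^*_3)^{0-}$ factor needed for $\ell^2$ summability forces $s>1/4$ uniformly.
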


\begin{proof}
We only consider the second term on the left-hand side with the Hilbert transformation.
The first term (for DNLS) was treated in \cite{G11,S21}; in fact, it can be dealt with in a similar manner but more easily.

We apply dyadic decompositions as
\[  H(u_1\bar{u}_2)u_3 = \sum _{\begin{smallmatrix} N_1,\dots ,N_4\geq 1 \\ N^*_1\sim N^*_2\end{smallmatrix}}P_{N_4}\big( H(P_{N_1}u_1P_{N_2}\bar{u}_2)P_{N_3}u_3\big) ,\]
where we write $N^*_1,\dots ,N^*_4$ to denote the numbers $N_1,\dots ,N_4$ rearranged in decreasing order.
It then suffices to show for each $\underline{N}=(N_1,\dots ,N_4)$ the localized estimate
\begin{equation}\label{est:tri-local}
\begin{aligned}
&\big\| \partial_xP_{N_4}\big( H(P_{N_1}u_1P_{N_2}\bar{u}_2)P_{N_3}u_3\big) \big\|_{G_{N_4}(T)}\\
&\quad \lesssim C(\underline{N})\| P_{N_1}u_1\|_{F_{N_1}(T)}\| P_{N_2}u_2\|_{F_{N_2}(T)}\| P_{N_3}u_3\|_{F_{N_3}(T)}
\end{aligned}
\end{equation}
with some $C(\underline{N})$ satisfying
\[ C(\underline{N})\lesssim \frac{N_1^sN_2^sN_3^s}{N_4^s}(N^*_3)^{0-}.\]
(Since $N^*_1\sim N^*_2$, the factor $(N^*_3)^{0-}$ allows us to restore the claimed estimate by summing up \eqref{est:tri-local} in $\underline{N}$.)
We will actually obtain \eqref{est:tri-local} with smaller $C(\underline{N})$ which satisfies
\begin{equation}\label{bound:c}
C(\underline{N})\lesssim \Big( \frac{N_1N_2N_3}{N^*_1}\Big) ^{(1/4)+}.
\end{equation}
From the definition of the $F_N(T)$, $G_N(T)$ norms, we need to prove
\begin{align*}
&\sup_{I_4=(a,b)\subset [0,T],\, |I_4|\leq N_4^{-1}}\big\| \partial_xP_{N_4}\big( H(P_{N_1}u_1P_{N_2}\bar{u}_2)P_{N_3}u_3\big) \big\|_{DU^2_\Delta (I_4)}\\
&\quad \lesssim C(\underline{N})\prod _{j=1}^3\sup_{I_j=[a,b)\subset [0,T],\, |I_j|\leq N_j^{-1}}\| \chi_{I_j}P_{N_j}u_j\|_{U^2_\Delta (\mathbb{R})}.
\end{align*}
Since $\partial_xP_{N_4}\big( H(P_{N_1}u_1P_{N_2}\bar{u}_2)P_{N_3}u_3\big) \in L^1([0,T];L^2)$ for $u_1,u_2,u_3\in F^s(T)\subset C([0,T];H^s)$, by Lemma~\ref{lem:UpVp} (ii) it suffices to prove either
\begin{equation}\label{est:tri1'}
\begin{aligned}
&\big\| \partial_xP_{N_4}\big( H(P_{N_1}u_1P_{N_2}\bar{u}_2)P_{N_3}u_3\big) \big\|_{L^1(I_4;L^2)}\\
&\quad \lesssim C(\underline{N})\prod _{j=1}^3\sup_{I_j=[a,b)\subset [0,T],\, |I_j|\leq N_j^{-1}}\| \chi_{I_j}P_{N_j}u_j\|_{U^2_\Delta (\mathbb{R})}
\end{aligned}
\end{equation}
or
\begin{equation}\label{est:tri2}
\begin{aligned}
&\Big| \int_{I_4} \int H(P_{N_1}u_1P_{N_2}\bar{u}_2)P_{N_3}u_3 \cdot \partial_xP_{N_4}\bar{u}_4\,dx\,dt\Big| \\
&\quad \lesssim C(\underline{N})\| u_4\|_{V^2_\Delta(I_4)}\prod _{j=1}^3\sup_{I_j=[a,b)\subset [0,T],\, |I_j|\leq N_j^{-1}}\| \chi_{I_j}P_{N_j}u_j\|_{U^2_\Delta (\mathbb{R})}
\end{aligned}
\end{equation}
for any $I_4=(a,b)\subset [0,T]$, $|I_4|\leq N_4^{-1}$ and any $u_4\in V^2_{rc,\Delta}(I_4)$.

When $N_4\ll N^*_1$, the time scale on the right-hand side is finer than that on the left-hand side, and therefore we need to first divide $I_4$ into sub-intervals of size $\leq (N^*_1)^{-1}$, the number of which is $O(N_1^*/N_4)$.
Then, to verify \eqref{est:tri1'} we need to show
\begin{equation}\label{est:tri1}
\begin{aligned}
&(N^*_1)^{1/2}\big\| P_{N_4}\big( H(P_{N_1}u_1P_{N_2}\bar{u}_2)P_{N_3}u_3\big) \big\|_{L^2(I;L^2)}\\
&\quad \lesssim C(\underline{N})\prod _{j=1}^3\| \chi_{I}P_{N_j}u_j\|_{U^2_\Delta (\mathbb{R})}
\end{aligned}
\end{equation}
for any interval $I$ with
\[ I=[a,b)\subset [0,T],\quad |I|\leq (N^*_1)^{-1}.\]
In fact, \eqref{est:tri1} implies \eqref{est:tri1'} by the Schwarz inequality in $t$ and the Bernstein inequality in $x$.
From now on, we write simply $u_j$ for $P_{N_j}u_j$.

\medskip
\noindent
\textbf{Case (I)} $N_4\sim N^*_1$.

(Ia) [high$\times$high$\times$high$\to$high]  $N_1\sim N_2\sim N_3\sim N_4$.\\
We simply use the $L^6$ Strichartz estimate (Lemma~\ref{lem:L6}) for each function:
\begin{align*}
(N^*_1)^{1/2}\| H(u_1\bar{u}_2)u_3\|_{L^2(I;L^2)}&\lesssim (N^*_1)^{1/2}\prod _{j=1}^3\| u_j\|_{L^6(I;L^6)}\lesssim (N^*_1)^{1/2}\prod _{j=1}^3\| \chi_Iu_j\|_{U^2_\Delta (\mathbb{R})}.
\end{align*}
This shows \eqref{est:tri1} with $C(\underline{N})=(N^*_1)^{1/2}$, which satisfies \eqref{bound:c}.

(Ib) [high$\times$high$\times$low$\to$high] $N^*_1\sim N^*_3\gg N^*_4$.

i) $N_1\sim N_2\gg N_3$.
In this case, we apply the standard bilinear Strichartz estimate (Lemma~\ref{lem:bs0}) to the product $u_1\bar{u}_2$, on which we may put $P_{\sim N^*_1}$.
Using the $L^\infty$ embedding 
\[ \| u_3\|_{L^\infty (I;L^\infty)}\lesssim N_3^{1/2}\| u_3\|_{L^\infty (I;L^2)}\lesssim N_3^{1/2}\| \chi_Iu_3\|_{U^2_\Delta (\mathbb{R})},\]
we have \eqref{est:tri1} with $C(\underline{N})=N_3^{1/2}$, which satisfies \eqref{bound:c}.

ii) $N_2\sim N_3\gg N_1$.
Noticing that $H(u_1\bar{u}_2)=u_1H(\bar{u}_2)$, we apply Lemma~\ref{lem:bs0} to the product $H(\bar{u}_2)u_3$ and follow the argument in the preceding case to obtain \eqref{est:tri1} with $C(\underline{N})=N_1^{1/2}$, which again satisfies \eqref{bound:c}.

iii) $N_1\sim N_3\gg N_2$.
In this case, we need to consider the dual estimate \eqref{est:tri2}, because we cannot use Lemma~\ref{lem:bs0} to the product $H(u_1)u_3$ (in the form of Remark~\ref{rem:bs}) when the Fourier supports of $u_1$ and $u_3$ are overlapping.
We first replace $u_4\in V^2_{rc,\Delta}(I_4)$ with its extension $\tilde{u}_4\in V^2_{-,rc,\Delta}(\mathbb{R})$ defined by $\tilde{u}_4(a):=\lim_{t\to a+0}u_4(t)$ and $\tilde{u}_4(t):=0$ for $t\not\in [a,b)$ (recall that $I_4=(a,b)$).
Next, we decompose $I_4$ into sub-intervals of length $\leq (N^*_1)^{-1}$, the number of which is $O(1)$.
Then, for each integral on a sub-interval $I=[a',b')$ we apply Lemma~\ref{lem:bs0} (in the form obtained in Remark~\ref{rem:V2}) to the product $u_3\partial_x\overline{\tilde{u}_4}$ (on which we may put $P_{\sim N^*_1}$), bound the remaining functions $H(u_1)$, $\bar{u}_2$ in the $L^\infty (I;L^2)$ and the $L^2(I;L^\infty)$ norms respectively, and finally derive the factor $(N^*_1)^{-1/2}N_2^{1/2}$ from the last one by the H\"older inequality in $t$ and the Bernstein inequality in $x$.
The resulting bound is
\begin{align*}
&\Big| \int_{I} \int H(u_1\overline{u_2})u_3 \partial_x\overline{\tilde{u}_4}\,dx\,dt\Big| \\
&\quad \lesssim (N^*_1)^{-(1/2)+}N_4(N^*_1)^{-1/2}N_2^{1/2}\| \chi_I\tilde{u}_4\|_{V^2_\Delta(\mathbb{R})}\prod _{j=1}^3\| \chi_Iu_j\|_{U^2_\Delta(\mathbb{R})}.
\end{align*}
(Since we have to bound $\tilde{u}_4$ in $V^2_{\Delta}$, the bilinear Strichartz estimate is accompanied by a factor $(N^*_1)^{0+}$.)
Now, it is verified directly from the definition of the $V^2_\Delta$ norm that
\[ \| \chi_I\tilde{u}_4\|_{V^2_\Delta(\mathbb{R})}\leq \| \tilde{u}_4\|_{V^2_\Delta(\mathbb{R})}\leq \sqrt{2}\| u_4\|_{V^2_\Delta(I_4)}.\]
As a result, we obtain \eqref{est:tri2} with $C(\underline{N})=(N^*_1)^{0+}N_2^{1/2}$, which satisfies \eqref{bound:c}.

(Ic) [high$\times$low$\times$low$\to$high] $N^*_1\sim N^*_2\gg N^*_3$.\\
We show \eqref{est:tri1} with $C(\underline{N})\lesssim (N^*_4)^{1/2}$.
If $N_1$ or $N_2\sim N^*_1$ (so that $N_1\not\sim N_2$), we can put $H$ on a single function.
Then, similarly to the case (Ib-i), we apply Lemma~\ref{lem:bs0} to the product of functions corresponding to $N^*_1$ and $N^*_3$ and use the $L^\infty$ embedding for the other one corresponding to $N^*_4$, to obtain the desired bound.
In the remaining case, i.e., if $N_3\sim N_4\gg N_1,N_2$, we apply the first modified bilinear Strichartz estimate (Lemma~\ref{lem:bs1}) to the left-hand side of \eqref{est:tri1}, which gives the same bound.

\medskip
\noindent
\textbf{Case (II)} $N_4\ll N^*_1$.

(IIa) [high$\times$high$\times$high$\to$low] $N_1\sim N_2\sim N_3\gg N_4$.\\
We follow the argument in the case (Ia) to obtain \eqref{est:tri1} with $C(\underline{N})=(N^*_1)^{1/2}$, which satisfies \eqref{bound:c}.

(IIb) [high$\times$high$\times$low$\to$low] $N^*_1\gg N^*_3$.

i) If $N_4\lesssim \min \{ N_1,N_2,N_3\}$, we show \eqref{est:tri1} with $C(\underline{N})\lesssim \min \{ N_1,N_2,N_3\}^{1/2}$ by considering the following two cases separately.
\begin{itemize}
\item If  $N_3\sim N^*_1$ (which implies $N_1\not\sim N_2$), we first bound the left-hand side of \eqref{est:tri1} by 
\[ (N^*_1)^{1/2}N_4^{1/2}\| H(u_1\bar{u}_2)u_3\|_{L^2(I;L^1)}\]
and then apply Lemma~\ref{lem:bs0} to $u_1\bar{u}_2$ (on which we may put $P_{\sim N^*_1}$).
This implies \eqref{est:tri1} with $C(\underline{N})=N_4^{1/2}$.
\item If $N_3\ll N^*_1$ (which implies $N_1\sim N_2\sim N^*_1$), we may put $P_{\lesssim N_3}$ on $u_1\bar{u}_2$.
Using the second modified bilinear Strichartz estimate (Lemma~\ref{lem:bs2}), we obtain \eqref{est:tri1} with $C(\underline{N})=N_3^{1/2}$.
\end{itemize}

ii) If $N_4\gg \min \{ N_1,N_2,N_3\}$, we consider the dual estimate \eqref{est:tri2}.
Note that we can always put $H$ on a single function, since $N_3\not\sim N_4$ and 
\[ \int_{I_4} \int H(u_1\overline{u_2})u_3 \partial_x\overline{u_4}\,dx\,dt=-\int_{I_4} \int u_1\overline{u_2}H(u_3 \partial_x\overline{u_4})\,dx\,dt.\]
Then, the argument is parallel to the case (Ib-iii).
This time we decompose $I_4$ into sub-intervals of length $\leq (N^*_1)^{-1}$, the number of which is $O(N^*_1/N_4)$, and apply Lemma~\ref{lem:bs0} to the product of functions corresponding to $N^*_1$ and $N^*_3\,(=N_4)$.
Further, we bound the remaining functions corresponding to $N^*_2$ and $N^*_4$ in the $L^\infty (I;L^2)$ and the $L^2(I;L^\infty)$ norms, respectively.
We then obtain \eqref{est:tri2} with
\[ C(\underline{N})\lesssim \frac{N^*_1}{N_4}\cdot (N^*_1)^{-(1/2)+}N_4(N^*_1)^{-1/2}(N^*_4)^{1/2}\lesssim (N^*_1)^{0+}(N^*_4)^{1/2},\]
which satisfies \eqref{bound:c}.

We have seen all the possible cases, and this completes the proof of the localized estimate \eqref{est:tri-local} with \eqref{bound:c}.
\end{proof}

%%%%%%%%%%%%%%%%%%%%%%%%%%%%%%%%%%%%%%%%%%
%     Section 4. Energy estimates
%%%%%%%%%%%%%%%%%%%%%%%%%%%%%%%%%%%%%%%%%%

\section{Energy estimate}\label{sec:energy}

In this section, we shall prove the following {\it a priori} estimate.
\begin{proposition}\label{prop:energy}
Assume $0<T\leq 1$ and that $u\in C([0,T];H^\infty)$ is a solution to \eqref{kdnls} on the time interval $[0,T]$.
Then, for $s>1/4$ there exist $\delta >0$ and $C>0$ (independent of $u$) such that if $\| u(0)\|_{L^2}\leq \delta$ then we have
\[ \| u\|_{E^s(T)}^2\leq C\Big( \| u(0)\|_{H^s}^2+\| u\|_{F^s(T)}^6\Big) .\]
\end{proposition}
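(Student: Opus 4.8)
The plan is to derive the energy estimate by working with the frequency-localized pieces $P_Nu$ and tracking the time evolution of $\|P_Nu(t)\|_{L^2}^2$, following the modified energy strategy. First I would fix a dyadic $N$, apply $P_N$ to \eqref{kdnls}, pair with $P_Nu$ in $L^2_x$, and take the real part. The dispersive term $i\partial_x^2u$ drops out, and the contribution of the cubic terms becomes a spatial integral of a quartic expression. Summing the naive estimate $N^{2s}\,\partial_t\|P_Nu\|_{L^2}^2 \lesssim (\text{quartic in }u)$ over $N$ would formally give $\|u\|_{E^s(T)}^2 \lesssim \|u(0)\|_{H^s}^2 + (\text{quartic short-time norm})$, but the derivative $\partial_x$ in the nonlinearity costs one power of $N^*_1$, which the short-time norms cannot absorb at $s>1/4$ directly. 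To fix this, I would introduce a \emph{modified energy} $E^s_N(t) = \|P_Nu(t)\|_{L^2}^2 + (\text{quartic correction term }R_N)$, where $R_N$ is chosen so that $\partial_t R_N$ cancels the worst (resonant, derivative-losing) part of $\partial_t\|P_Nu\|_{L^2}^2$. Because of the Hilbert transform and the dissipative structure (the sign condition $\beta<0$), the correction has to be built asymmetrically — this is the point flagged in the introduction as differing from \cite{KT07,G11,S21}. Concretely, the kinetic term contributes, after pairing, a quantity whose leading part is sign-definite (the $L^2$-dissipation), and one must keep that term rather than cancel it, while the non-sign-definite remainder is what $R_N$ is designed to kill.

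The key steps in order: (1) compute $\partial_t\|P_Nu\|_{L^2}^2$ explicitly, writing the result as a sum over frequency interactions $(\xi_1,\xi_2,\xi_3,\xi_4)$ with $\xi_1-\xi_2+\xi_3-\xi_4 = 0$ and $|\xi_4|\sim N$; (2) isolate the resonant region where the modulation $\Omega = -\xi_1^2+\xi_2^2-\xi_3^2+\xi_4^2$ is small — there the $\partial_x$ loss is genuine and must be absorbed by $R_N$ — and the non-resonant region where division by $\Omega$ gains back the derivative; (3) define $R_N$ by integrating by parts in time against $1/\Omega$ on the non-resonant part (equivalently, using the equation to replace $\partial_tu$), producing a quartic correction plus a sextic remainder $\partial_t R_N$; (4) estimate $|R_N(t)| \lesssim N^{-2s}\|u\|_{F^s(T)}^4$ so that, given smallness $\|u(0)\|_{L^2}\le\delta$ (hence $\|u\|_{F^s}\lesssim\delta$ by the already-admitted bootstrapped bounds), the modified energy is comparable to $\|P_Nu\|_{L^2}^2$ up to constants; (5) estimate the sextic remainder $\int_0^T |\partial_t R_N|\,dt \lesssim N^{-2s}\|u\|_{F^s(T)}^6$ after summing in $N$; (6) handle the genuinely resonant quartic contribution that was \emph{not} removed — here the dissipative term's favorable sign is used to discard the dangerous piece, and the remaining resonant pieces are estimated by the short-time bilinear Strichartz estimates (Lemmas~\ref{lem:bs0}, \ref{lem:bs1}, \ref{lem:bs2}) and the $L^6$ estimate (Lemma~\ref{lem:L6}), splitting $[0,T]$ into intervals of length $\le (N^*_1)^{-1}$ and using that the number of such intervals is $O(N^*_1/N_4) = O(N^*_1/N)$.

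The main obstacle I anticipate is step (6) combined with the bookkeeping of which term the correction $R_N$ must target. Unlike DNLS, where the resonant quartic term after the relevant gauge/normal-form reduction is essentially real and can be absorbed cleanly, here the Hilbert transform breaks the symmetry $\xi_1\leftrightarrow\xi_3$ (and the conjugation symmetry), so the resonant contribution splits into a sign-definite dissipative part — which one keeps and uses as a gain, contributing $-c\int_0^T(\cdots)\,dt \le 0$ on the correct side of the inequality — and several non-definite parts that need to be either absorbed into $R_N$ or estimated with the bilinear tools. Getting the multiplier symbol of $R_N$ to be bounded (so that Littlewood–Paley multiplier estimates and the Coifman–Meyer-type bounds apply) on the non-resonant region, uniformly across all interaction configurations (high-high-high-to-high, high-high-low, etc., exactly as enumerated in the proof of Proposition~\ref{prop:trilinear}), is the delicate computational core. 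A secondary difficulty is that in the high-high-to-low output case $N_4\ll N^*_1$ the time-interval mismatch forces the $O(N^*_1/N_4)$ loss, which must be beaten by the $N^*_1$-gain from the bilinear estimate together with a small power $(N^*_3)^{0-}$ to sum the dyadic pieces; tracking that this still closes at $s>1/4$ with constants uniform for $T\in(0,1]$ is where I expect the $s>1/4$ threshold to show up, just as in the trilinear estimate.
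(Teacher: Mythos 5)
Your high-level strategy matches the paper's: build a modified energy whose quartic correction is chosen by a normal-form argument, keep the sign-definite dissipative part of the resonant contribution using $\beta<0$ rather than try to cancel it, and estimate the resulting sextic remainder with the short-time $L^6$ and bilinear Strichartz tools after chopping $[0,T]$ into intervals of length $\lesssim (N^*_1)^{-1}$. The decomposition you describe in words — the Hilbert transform breaks the symmetry so the symmetrized quartic term splits into a sign-definite piece you keep plus a piece whose multiplier vanishes on $\{\xi_{12}\xi_{23}=0\}$ and is thus removable by a correction — is exactly the $\mathcal{Q}_1 + \mathcal{Q}_2$ split in the paper.

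There are, however, two places where your plan has a real gap compared to what the paper does. First, the way you control the size of the correction term: bounding $|R_N(t)|\lesssim N^{-2s}\|u\|_{F^s(T)}^4$ and then invoking ``$\|u\|_{F^s}\lesssim\delta$ by the already-admitted bootstrapped bounds'' is circular. The bootstrap in the proof of Theorem~\ref{thm:main} \emph{uses} Proposition~\ref{prop:energy}, so inside the proof of the energy estimate you only have the hypothesis $\|u(0)\|_{L^2}\le\delta$, not smallness of $\|u\|_{F^s(T)}$. The paper avoids this by proving the pointwise bound $|E^a_1(f)|\lesssim \|f\|_{L^2}^2\,E^a_0(f)$ (Lemma~\ref{lem:pointwise}) and then exploiting that the $L^2$ norm is monotonically decreasing for smooth solutions (the dissipation again), so that $\|u(t)\|_{L^2}\le\|u(0)\|_{L^2}\le\delta$ for all $t$ and the correction can be absorbed into the energy with only the $L^2$ smallness hypothesis. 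Second, working directly with the sharply localized pieces $\|P_Nu\|_{L^2}^2$ makes the multiplier of the correction term involve non-smooth cutoffs, which undermines the separation-of-variables argument (the Fourier-series expansion of $b^a_4$ in Lemma~\ref{lem:separation}) that the sextic-remainder estimate relies on. The paper circumvents this, and simultaneously recovers the stronger $E^s(T)$ bound (supremum in $t$ \emph{inside} the $\ell^2_N$ sum), by running the argument once with a smooth symbol $a(\xi)$ satisfying \eqref{property:a}, and then choosing $a=a^{\overline{N}}$ with a slowly varying weight $(N/\overline{N}\wedge\overline{N}/N)^{\varepsilon}$ and summing over $\overline{N}$. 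Your plan as written yields at best an $L^\infty_tH^s_x$ bound, not the $E^s(T)$ bound, and you would need to import this symbol-varying device (or an equivalent) to close. A minor bookkeeping slip: in the sextic remainder estimate the number of sub-intervals of length $(N^*_1)^{-1}$ inside $[0,T]$, $T\le1$, is $O(N^*_1)$, not $O(N^*_1/N_4)$; the ratio $O(N^*_1/N_4)$ belongs to the trilinear estimate where the outer interval has length $N_4^{-1}$.
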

In fact, this is the main part of the proof of Theorem~\ref{thm:main}. 
Recall that the $E^s(T)$ norm takes $L^\infty_t$ before the $\ell^2$ summation over dyadic frequency blocks, and so it is fairly stronger than the $L^\infty_tH^s_x$ norm.

\subsection{A reduction}

First of all, we reduce Proposition~\ref{prop:energy} to the following estimate on a ``modified energy''.
\begin{proposition}\label{prop:energy2}
Let $0<T\leq 1$ and $u\in C([0,T];H^\infty)$ be a solution to \eqref{kdnls} on the time interval $[0,T]$.
Let $s>1/4$, and assume that a smooth symbol $a\in C^\infty (\mathbb{R})$ has the following properties:
\begin{equation}\label{property:a}
\left\{ \begin{aligned}
&\text{$a$ is positive, even, non-decreasing in $[0,\infty )$, constant on $[-1,1]$,}\\
&a(2\xi )\lesssim a(\xi) \quad \text{for any $\xi >0$},\\
&\frac{a(N_1)}{a(N_2)}\gtrsim \Big( \frac{N_1}{N_2}\Big) ^{1/2} \quad \text{for any $N_1>N_2\geq 1$},\\
&|\partial_\xi ^ja(\xi )|\lesssim \langle \xi \rangle ^{-j}a(\xi )\quad \text{for any $\xi\in \mathbb{R}$ and $1\leq j\leq 5$}.
\end{aligned}\right.
\end{equation}
Then, there exist $\delta >0$ and $C>0$ depending on $s$ and the implicit constants in \eqref{property:a} (but not on $u$) such that if $\| u(0)\|_{L^2}\leq \delta$ then we have
\begin{align*}
E^a_0(u(t)):={}&\| \sqrt{a(D)}u(t)\|_{L^2}^2\\
\leq {}&C\Big( E^a_0(u(0))+\| u\|_{F^s(T)}^4\sum_{N\geq 1}a(N)\| P_Nu\|_{F_N(T)}^2\Big) ,\qquad t\in [0,T].
\end{align*}
\end{proposition}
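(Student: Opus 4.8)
The plan is to implement the modified energy method inside the short-time Fourier restriction framework of Section~\ref{sec:2}, the essential new point being that the kinetic term $\beta\partial_x(H(|u|^2)u)$, which is less symmetric than $\alpha\partial_x(|u|^2u)$, produces a resonant contribution that does not cancel but is instead \emph{dissipative} because $\beta<0$. First I would differentiate the energy: since $a(D)$ is self-adjoint and $\xi^2a(\xi)$ is real, substituting \eqref{kdnls} gives
\[
\frac{d}{dt}E^a_0(u(t))=2\,\mathrm{Re}\int a(D)\Big(\alpha\partial_x(|u|^2u)+\beta\partial_x\big(H(|u|^2)u\big)\Big)\overline{u}\,dx,
\]
with the purely linear contribution vanishing. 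On the Fourier side this is a quartic form over the hyperplane $\{\xi_1-\xi_2+\xi_3-\xi_4=0\}$ which, up to harmless constants, is $\alpha\,\mathrm{Im}$ and $\beta\,\mathrm{sgn}(\xi_1-\xi_2)\,\mathrm{Re}$ of $b(\xi_4)\hat u(\xi_1)\overline{\hat u(\xi_2)}\hat u(\xi_3)\overline{\hat u(\xi_4)}$, where $b(\xi):=\xi a(\xi)$. The dangerous feature is that $b$ carries one full derivative together with the weight $a$, both at the (highest) output frequency, so a naive bound loses $N^*_1a(N^*_1)$ and cancellation must be extracted.

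Next I would symmetrize and decompose with respect to $\Phi:=2(\xi_1-\xi_2)(\xi_3-\xi_2)$ (equal on the hyperplane to $\xi_1^2-\xi_2^2+\xi_3^2-\xi_4^2$ up to sign). Symmetrizing in $(\xi_1,\xi_3)$ and $(\xi_2,\xi_4)$ and using conjugation symmetry, the $\alpha$-part acquires the multiplier $b(\xi_2)+b(\xi_4)-b(\xi_1)-b(\xi_3)$, which \emph{vanishes on $\{\Phi=0\}$} (the classical DNLS identity: $\Phi=0$ forces $\xi_1=\xi_2$ or $\xi_2=\xi_3$, and in either case the frequencies pair off) and, using \eqref{property:a}, is pointwise $\lesssim|\Phi|\,a(N^*_1)/N^*_1$. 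I then define the quartic modified-energy correction $\Lambda(t)$ by dividing the symmetrized multiplier by $i\Phi$ on the region where $|\Phi|$ is not too small, keeping only genuinely non-resonant interactions, so that $\Lambda$ carries no derivative loss; by the product rule, substituting \eqref{kdnls} into $\partial_t\Lambda$ makes the linear term cancel the non-resonant part of $\frac{d}{dt}E^a_0$, leaving only sextic expressions from the nonlinearity plus a harmless near-resonant quartic remainder. For the $\beta$-part on $\{\Phi=0\}$: on $\{\xi_1=\xi_2\}$ the factor $\mathrm{sgn}(\xi_1-\xi_2)$ vanishes, while on $\{\xi_2=\xi_3\}$ (so $\xi_1=\xi_4$) one is left, after symmetrizing $\xi_1\leftrightarrow\xi_2$, with a multiple of
\[
\beta\sum_{\xi_1,\xi_2}\mathrm{sgn}(\xi_1-\xi_2)\big(b(\xi_1)-b(\xi_2)\big)\,|\hat u(\xi_1)|^2\,|\hat u(\xi_2)|^2,
\]
which is \emph{non-positive}, since $b(\xi)=\xi a(\xi)$ is non-decreasing on all of $\mathbb{R}$ (as $a$ is even and non-decreasing on $[0,\infty)$) and $\beta<0$; this term is therefore simply discarded. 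This is exactly where the construction departs from \cite{KT07,G11,S21}: for the DNLS nonlinearity the resonant part cancels identically, whereas here it is removed only thanks to the dissipative sign.

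It then remains to estimate $|\Lambda(t)|$ and the time integral over $[0,T]$ of the near-resonant quartic remainder and of the sextic remainders, with the target bounds $\|u\|_{F^s(T)}^2\sum_N a(N)\|P_Nu\|_{F_N(T)}^2$ for the former and $\|u\|_{F^s(T)}^4\sum_N a(N)\|P_Nu\|_{F_N(T)}^2$ for the latter. For this I would decompose dyadically in all frequencies, split $[0,T]$ into at most $O(N^*_1)$ intervals of length $\sim(N^*_1)^{-1}$ (using $T\leq1$), and on each such interval invoke the short-time $L^6$ and bilinear Strichartz estimates of Section~\ref{sec:2} (Lemmas~\ref{lem:L6}, \ref{lem:bs0}, \ref{lem:bs1}, \ref{lem:bs2}) together with the duality and interpolation parts of Lemma~\ref{lem:UpVp}, exactly as in the proof of Proposition~\ref{prop:trilinear}. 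The factor $\Phi$ in the denominator offsets the derivative in the multiplier, the weight $a(N^*_1)$ is split onto the one or two high-frequency inputs, and the residual powers of frequency are absorbed using \eqref{property:a}; the lower bound $a(N_1)/a(N_2)\gtrsim(N_1/N_2)^{1/2}$ — which is precisely the threshold $s\geq1/4$ — is what keeps a $1/2$-power of frequency free for the bilinear gain and hence lets one sum over dyadic blocks. The $L^2$-smallness $\|u(t)\|_{L^2}\leq\|u(0)\|_{L^2}\leq\delta$, valid for smooth solutions of \eqref{kdnls} by the $L^2$ dissipation, is then used to treat $\Lambda$ as a genuine perturbation of $E^a_0$ and to absorb the lowest-order remainders. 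Combining everything, $\mathcal{E}(t):=E^a_0(u(t))+\Lambda(t)$ satisfies $|\mathcal{E}(t)-E^a_0(u(t))|\lesssim\|u\|_{F^s(T)}^2\sum_N a(N)\|P_Nu\|_{F_N(T)}^2$ and $\mathcal{E}(t)\leq\mathcal{E}(0)+C\|u\|_{F^s(T)}^4\sum_N a(N)\|P_Nu\|_{F_N(T)}^2$ by the fundamental theorem of calculus, and the asserted bound follows with $\delta$ small enough.

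The main obstacle is the last step in the configurations where the output frequency $N_4$ is (one of) the largest: there the derivative loss is maximal, $\Phi$ is smallest, and one must simultaneously use the dissipative gain, the sharp bilinear Strichartz estimate on $\mathbb{T}$, and the precise decay hypotheses \eqref{property:a} on $a$ — this is the analogue, at the level of the modified energy, of the delicate Case~(I) in the proof of Proposition~\ref{prop:trilinear}, and is the reason for imposing the somewhat technical conditions on $a$.
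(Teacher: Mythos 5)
Your proposal correctly isolates the key structural issue — the kinetic multiplier does not cancel on the resonance set and must be handled via the dissipative sign $\beta<0$ — but the step where you declare the near-resonant quartic remainder ``harmless'' is a genuine gap, and the correction term you build by dividing the off-resonant part by $i\Phi$ does not have the size needed for the pointwise bound (the analogue of Lemma~\ref{lem:pointwise}).

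Concretely, after symmetrization the $\beta$-part carries the multiplier (paper's convention $\xi_{1234}=0$)
\[
m:=\big(\xi_1a(\xi_1)+\xi_2a(\xi_2)\big)\mathrm{sgn}(\xi_{12})+\big(\xi_3a(\xi_3)+\xi_4a(\xi_4)\big)\mathrm{sgn}(\xi_{34})=A+B,
\]
with $A,B\ge 0$. You correctly observe that the exact trace on $\{\xi_{23}=0\}$ is a sign-definite quadratic form, but $m$ does \emph{not} vanish as $\xi_{23}\to 0$: there $m=2|\xi_1a(\xi_1)+\xi_2a(\xi_2)|>0$. Hence $m/\Phi$ with $\Phi=-2\xi_{12}\xi_{23}$ behaves like $a(N^*_1)/|\xi_{23}|$ near that set and is of size $a(N^*_1)$, not $a(N^*_1)/N^*_1$; the resulting $|\Lambda(f)|\lesssim\|f\|_{L^2}^2E^a_0(f)$ bound fails and $\Lambda$ cannot be absorbed into $E^a_0$ using $L^2$-smallness. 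Conversely, if you instead retain a neighborhood $0<|\xi_{23}|\lesssim 1$ as a ``near-resonant remainder,'' that piece is no longer a sign-definite quadratic form (the non-positivity trick you invoke holds only on the exact diagonal, because the off-diagonal form is $2\Re\sum_\xi h(\xi)\overline{g(\xi)}$ rather than a single $|\cdot|^2$) and it still carries the full derivative loss $\sim a(N^*_1)N^*_1$, so it cannot be closed by $F^s$ norms.

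The missing idea is the square completion that the paper uses. Writing
\[
\tfrac{\beta}{2}(A+B)=\beta\sqrt{AB}+\tfrac{\beta}{2}\big(\sqrt{A}-\sqrt{B}\big)^2,
\]
the first term yields the \emph{globally} non-positive form $\mathcal{Q}_1=\beta\sum_\xi\big|\sum_{\xi_{12}=\xi}\sqrt{A}\,\hat u(\xi_1)\hat{\bar u}(\xi_2)\big|^2\le 0$ (not only a non-positive diagonal), so it can be discarded in full with no near-resonant leftover. The residual multiplier $(\sqrt{A}-\sqrt{B})^2$ genuinely vanishes on both $\{\xi_{12}=0\}$ and $\{\xi_{23}=0\}$ (on the latter $A=B$), and dividing it by $\Phi$ produces a smooth symbol $b^a_4$ with $|b^a_4|\lesssim a(N^*_1)/N^*_1$. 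That extra factor $1/N^*_1$ is exactly what makes Lemma~\ref{lem:pointwise} and the sextic estimate (Proposition~\ref{prop:R6}) close. Without this decomposition neither the pointwise control of the correction nor the treatment of the near-resonant region in your outline goes through.
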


\begin{remark}
(i) The Sobolev weight $a(\xi )=\langle \xi \rangle ^{2s}$ ($s\geq 1/4$) satisfies the conditions \eqref{property:a} (after modifying on $[-1,1]$).
With this choice of $a$, we can obtain from Proposition~\ref{prop:energy2} an $L^\infty([0,T];H^s)$ {\it a priori} estimate.
This is, however, weaker than what we want to prove in Proposition~\ref{prop:energy}. 

(ii) To obtain an $E^s(T)$ bound, one may consider estimating localized $H^s$ norms $N^{2s}\| \psi_N(D) u(t)\|_{L^2}^2$ for dyadic numbers $N\geq 1$ and summing them up. 
This is indeed the approach taken in \cite{IKT08}.
On the other hand, we will improve the bound by adding a correction term to the energy functional.
For this purpose, it will be convenient to introduce a modified energy $\| \sqrt{a(D)} u(t)\|_{L^2}^2$ and estimate it instead of the localized $H^s$ norms, where a symbol $a(\xi)$ is chosen so that it is positive everywhere and its derivatives are controlled by itself as $|\partial_\xi^ja(\xi)|\lesssim \langle \xi \rangle^{-j}a(\xi)$.
Such a modified energy has been used for the cubic NLS in \cite{KT07} and for the modified Benjamin-Ono (and also the DNLS) equation in \cite{G11,S21}.

(iii) Our choice of $a(\xi )$ is slightly simpler than that in \cite{KT07,G11,S21} (see the proof of Proposition~\ref{prop:energy} below).
Indeed, the modified energies in these papers are defined from a sequence of positive numbers $\{ \beta _N\}$ depending on the initial data, but we do not use such a sequence.
\end{remark}

\begin{proof}[Proof of Proposition~\ref{prop:energy} from Proposition~\ref{prop:energy2}]
Let $s>1/4$, $\varepsilon :=s-1/4>0$.
For each dyadic integer $\overline{N}$, we define the positive sequence $\{ a^{\overline{N}}_N\}_{N\geq 1}$ by
\[ a^{\overline{N}}_N:=N^{2s}\Big( \frac{N}{\overline{N}}\wedge \frac{\overline{N}}{N}\Big) ^{\varepsilon}=\left\{ \begin{alignedat}{2} &\overline{N}^{-\varepsilon}N^{2s+\varepsilon} &\quad &(N\leq \overline{N}), \\ &\overline{N}^{\varepsilon}N^{2s-\varepsilon} & &(N\geq \overline{N}).\end{alignedat}\right. \]
It is clear that $\{ a^{\overline{N}}_N\}$ is increasing in $N$.
In fact, the growth of $\{ a^{\overline{N}}_N\}$ is controlled as
\[ 2^{2s-\varepsilon}\leq \frac{a^{\overline{N}}_{2N}}{a^{\overline{N}}_N}\leq 2^{2s+\varepsilon}\qquad (N\geq 1).\]
Now, define the smooth symbol $a^{\overline{N}}\in C^\infty (\mathbb{R})$ by
\[ a^{\overline{N}}(\xi ):=\sum _{N\geq 1}a^{\overline{N}}_N\psi_N(\xi).\]
It is easy to see that $a^{\overline{N}}$ satisfies all the properties in \eqref{property:a} with implicit constants independent of $\overline{N}$.
Applying Proposition~\ref{prop:energy2} and restricting the left-hand side of the resulting estimate to the target frequency $\{ \langle \xi \rangle \sim \overline{N}\}$, we have
\[ \sup _{t\in [0,T]}\overline{N}^{2s}\| P_{\overline{N}}u(t)\|_{L^2}^2\lesssim \sum _{N\geq 1}\Big( \frac{N}{\overline{N}}\wedge \frac{\overline{N}}{N}\Big) ^{\varepsilon}N^{2s}\Big[ \| P_Nu(0)\|_{L^2}^2+\| u\|_{F^s(T)}^4\| P_Nu\|_{F_N(T)}^2\Big] \]
for any smooth solution $u\in C([0,T];H^\infty )$ to \eqref{kdnls} with $\| u(0)\|_{L^2}$ sufficiently small.
Summing up in $\overline{N}$, we obtain the claimed estimate.
\end{proof}

\subsection{Construction of the modified energies}

Now, we start proving Proposition~\ref{prop:energy2}.
The argument is very similar to the estimate of the modified energy with correction terms in the $I$-method, where an important role is played by various cancellations after symmetrization of the energy functionals (see, e.g., Colliander, Keel, Staffilani, Takaoka and Tao \cite{CKSTT}).
However, there are less symmetries compared to the DNLS case, and more delicate analysis is required.
In particular, some of the highest order terms cannot be canceled out, and we need to recognize these terms to be non-positive by making use of the dissipative nature of the equation. 

Let $a\in C^\infty (\mathbb{R})$ be a symbol satisfying \eqref{property:a}.
Hereafter, the notation $\xi_{ij\ldots}=\xi_i+\xi_j+\cdots$ will be frequently used.
Our proof will be designed for the periodic problem; however, in view of Remark~\ref{rem:nonperiodic}, the same argument can be applied in the non-periodic setting.
For a smooth solution $u$ of \eqref{kdnls}, we have
\begin{align*}
\partial_t\hat{u}(\xi) &= -i\xi ^2\hat{u}(\xi) +i\alpha \xi \sum _{\xi =\xi _{123}}\hat{u}(\xi_1)\hat{\bar{u}}(\xi _2)\hat{u}(\xi_3) +\beta \xi \sum _{\xi =\xi_{123}}\mathrm{sgn}(\xi_{12})\hat{u}(\xi_1)\hat{\bar{u}}(\xi _2)\hat{u}(\xi_3),\\
\partial_t\hat{\bar{u}}(\xi) &= i\xi ^2\hat{\bar{u}}(\xi) +i\alpha \xi \sum _{\xi =\xi _{123}}\hat{\bar{u}}(\xi_1)\hat{u}(\xi _2)\hat{\bar{u}}(\xi_3) +\beta \xi \sum _{\xi =\xi_{123}}\mathrm{sgn}(\xi_{23})\hat{\bar{u}}(\xi_1)\hat{u}(\xi _2)\hat{\bar{u}}(\xi_3).
\end{align*}
The derivative of $E^a_0(u(t))=\sum _{\xi _{12}=0}a(\xi_1)\hat{u}(\xi_1)\hat{\bar{u}}(\xi_2)$ is computed as
\begin{align*}
\frac{d}{dt}E^a_0(u(t))
&=i\alpha \sum _{\xi_{1234}=0}\Big[ a(\xi_{123})\xi_{123} +a(\xi_1)\xi_{234}\Big] \hat{u}(\xi_1)\hat{\bar{u}}(\xi_2)\hat{u}(\xi_3)\hat{\bar{u}}(\xi_4)\\
&\quad +\beta \sum _{\xi_{1234}=0}\Big[ a(\xi_{123})\xi_{123}\mathrm{sgn}(\xi_{12}) +a(\xi_1)\xi_{234}\mathrm{sgn}(\xi_{34})\Big] \hat{u}(\xi_1)\hat{\bar{u}}(\xi_2)\hat{u}(\xi_3)\hat{\bar{u}}(\xi_4).
\end{align*}
The first term is the same as that appears in the DNLS case, and it is symmetrized as follows:
\[ \frac{\alpha}{2i}\sum _{\xi_{1234}=0}\Big[ \xi_1a(\xi_1)+\xi_2a(\xi_2)+\xi_3a(\xi_3)+\xi_4a(\xi_4)\Big] \hat{u}(\xi_1)\hat{\bar{u}}(\xi_2)\hat{u}(\xi_3)\hat{\bar{u}}(\xi_4) .\]
We observe that the multiplier part $\xi_1a(\xi_1)+\dots +\xi_4a(\xi_4)$ vanishes for the resonant frequencies:
\[ \xi_{1234}=0,\qquad \xi_1^2-\xi_2^2+\xi_3^2-\xi_4^2=-2\xi_{12}\xi_{23}=0.\]
Then, this quadrilinear term can be canceled with the leading term of the derivative of the quadrilinear functional
\[ -\frac{\alpha}{2i} \sum _{\begin{smallmatrix}\xi_{1234}=0\\ \xi_{12}\xi_{23}\neq 0\end{smallmatrix}}\frac{\xi_1a(\xi_1)+\xi_2a(\xi_2)+\xi_3a(\xi_3)+\xi_4a(\xi_4)}{-i(\xi_1^2-\xi_2^2+\xi_3^2-\xi_4^2)}\hat{u}(\xi_1)\hat{\bar{u}}(\xi_2)\hat{u}(\xi_3)\hat{\bar{u}}(\xi_4) ,\]
which can be used as an appropriate correction term to $E^a_0(u)$.

In the following, we assume $\alpha=0$ for simplicity and consider the term
\[ \beta \sum _{\xi_{1234}=0}\Big[ a(\xi_{123})\xi_{123}\mathrm{sgn}(\xi_{12}) +a(\xi_1)\xi_{234}\mathrm{sgn}(\xi_{34})\Big] \hat{u}(\xi_1)\hat{\bar{u}}(\xi_2)\hat{u}(\xi_3)\hat{\bar{u}}(\xi_4). \]
This term has less symmetry due to the sign functions.
In fact, this is symmetrized as
\[ \frac{\beta}{2}\sum _{\xi_{1234}=0}\Big[ \big( \xi_1a(\xi_1)+\xi_2a(\xi_2)\big) \mathrm{sgn}(\xi_{12}) +\big( \xi_3a(\xi_3)+\xi_4a(\xi_4)\big) \mathrm{sgn}(\xi_{34}) \Big] \hat{u}(\xi_1)\hat{\bar{u}}(\xi_2)\hat{u}(\xi_3)\hat{\bar{u}}(\xi_4), \]
and the multiplier part does not vanish when $\xi_{23}=0\neq \xi_{12}$ (in this case $\mathrm{sgn}(\xi_{34}) =-\mathrm{sgn}(\xi_{12}) \neq 0$).
Now, we observe that the function $\xi \mapsto \xi a(\xi)$ is odd and strictly increasing on $\mathbb{R}$, and hence 
\[ \big( \xi_1a(\xi_1)+\xi_2a(\xi_2)\big) \mathrm{sgn}(\xi_{12}) = \big| \xi_1a(\xi_1)+\xi_2a(\xi_2)\big| >0\]
for any $\xi_1,\xi_2\in \mathbb{R}$ with $\xi_{12}\neq 0$.
Then, we decompose this term as
\begin{align*}
&\beta \sum _{\xi_{1234}=0}\sqrt{|\xi_1a(\xi_1)+\xi_2a(\xi_2)|} \sqrt{|\xi_3a(\xi_3)+\xi_4a(\xi_4)|} \hat{u}(\xi_1)\hat{\bar{u}}(\xi_2)\hat{u}(\xi_3)\hat{\bar{u}}(\xi_4)\\
&+\frac{\beta}{2}\sum _{\xi_{1234}=0}\Big[ \sqrt{|\xi_1a(\xi_1)+\xi_2a(\xi_2)|} -\sqrt{|\xi_3a(\xi_3)+\xi_4a(\xi_4)|} \Big] ^2\hat{u}(\xi_1)\hat{\bar{u}}(\xi_2)\hat{u}(\xi_3)\hat{\bar{u}}(\xi_4)\\
&=:\mathcal{Q}_1+\mathcal{Q}_2.
\end{align*}
On one hand, for $\beta <0$ we have
\[ \mathcal{Q}_1=\beta \sum _{\xi}\Big| \sum _{\xi_{12}=\xi}\sqrt{|\xi_1a(\xi_1)+\xi_2a(\xi_2)|}\hat{u}(\xi_1)\hat{\bar{u}}(\xi_2)\Big|^2 \leq 0.\]
On the other hand, it will turn out that the multiplier part of $\mathcal{Q}_2$ vanishes when $\xi_{12}=0$ and also when $\xi_{23}=0$.
$\mathcal{Q}_2$ is then canceled out by adding the correction term
\[ E^a_1(u):=\sum _{\begin{smallmatrix}\xi_{1234}=0\\ \xi_{12}\xi_{23}\neq 0\end{smallmatrix}}b^a_4(\xi_1,\xi_2,\xi_3,\xi_4)\mathrm{sgn}(\xi_{12})\hat{u}(\xi_1)\hat{\bar{u}}(\xi_2)\hat{u}(\xi_3)\hat{\bar{u}}(\xi_4) \]
to the modified energy $E^a_0(u)$, where
\[ b^a_4(\xi_1,\xi_2,\xi_3,\xi_4):=-\frac{\beta}{2}\frac{\big[ \sqrt{|\xi_1a(\xi_1)+\xi_2a(\xi_2)|} -\sqrt{|\xi_3a(\xi_3)+\xi_4a(\xi_4)|} \big] ^2}{2i\xi_{12}\xi_{23}\mathrm{sgn}(\xi_{12})}.\]
We can show that $b^a_4$ is extended to a smooth function on $\Gamma_4$, where
\[ \Gamma_m=\{ (\xi_j)_{1\leq j\leq m}:\xi_{12\dots m}=0\} \]
(we put $\mathrm{sgn}(\xi_{12})$ outside in order to make $b^a_4$ smooth).
Moreover, $\xi_{12}\xi_{23}=0$ implies $b^a_4(\xi_1,\xi_2,\xi_3,\xi_4)=0$ (and hence the restriction $\xi_{12}\xi_{23}\neq 0$ for the sum in $E^a_1(u)$ can be disregarded).
In fact, noticing that
\[ q(\xi_1,\xi_2):=\frac{\xi_1a(\xi_1)+\xi_2a(\xi_2)}{\xi_1+\xi_2}>0\qquad (\xi_1,\xi_2\in \mathbb{R},~\xi_{12}\neq 0),\]
on $\Gamma_4\cap \{ \xi_{12}\xi_{23}\neq 0\}$ we compute it as
\begin{align*}
b^a_4(\xi_1,\xi_2,\xi_3,\xi_4)
&=\frac{i\beta}{4\xi_{23}}\bigg[ \sqrt{\frac{|\xi_1a(\xi_1)+\xi_2a(\xi_2)|}{|\xi_{12}|}}-\sqrt{\frac{|\xi_3a(\xi_3)+\xi_4a(\xi_4)|}{|\xi_{34}|}}\bigg] ^2\\
&=\frac{i\beta}{4\xi_{23}}\big[ \sqrt{q(\xi_1,\xi_2)}-\sqrt{q(\xi_3,\xi_4)}\big] ^2\\ 
&=\frac{i\beta}{4\xi_{23}}\Big[ \frac{q(\xi_1,\xi_2)-q(\xi_1+\xi_{23},\xi_2-\xi_{23})}{\sqrt{q(\xi_1,\xi_2)}+\sqrt{q(\xi_3,\xi_4)}}\Big] ^2\\
&=\frac{i\beta\xi_{23}}{4}\frac{\Big[ \displaystyle\int _0^1\big( \partial_1q-\partial_2q\big) (\xi_1+\xi_{23}t,\xi_2-\xi_{23}t)\,dt\Big] ^2}{\big[ \sqrt{q(\xi_1,\xi_2)}+\sqrt{q(\xi_3,\xi_4)}\big] ^2},
\end{align*}
while $q(\xi_1,\xi_2)$ is actually positive and smooth on $\mathbb{R}^2$, as we see in the following lemma.
\begin{lemma}\label{lem:q}
$q(\xi_1,\xi_2)$ can be extended to a smooth positive function on $\mathbb{R}^2$.
Moreover, the following holds (with $\xi_{\max}:=|\xi_1|\vee |\xi_2|$):

\noindent
$\mathrm{(i)}$ $q(\xi_1,\xi_2)\!\sim \!a(\xi_{\max})$, $\big| \big[ \partial_1^{\gamma_1}\partial_2^{\gamma_2}q\big] (\xi_1,\xi_2)\big| \!\lesssim\! \langle \xi_1\rangle^{-\gamma_1}\langle \xi_2\rangle^{-\gamma_2}a(\xi_{\max})$ $(1\!\leq\! |\gamma|\!\leq\! 3)$.

\noindent
$\mathrm{(ii)}$ $\big| \big[ \partial_1^{\gamma_1}\partial_2^{\gamma_2}(\partial_1-\partial_2)q\big](\xi_1,\xi_2)\big| \lesssim \langle \xi_1\rangle^{-\gamma_1}\langle \xi_2\rangle^{-\gamma_2}\langle \xi_{\max}\rangle^{-1}a(\xi_{\max})$ $(0\leq |\gamma|\leq 3)$.
\end{lemma}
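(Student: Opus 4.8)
The plan is to reduce everything to understanding the single-variable function $g(\xi):=\xi a(\xi)$ and the associated divided difference. First I would observe that $q(\xi_1,\xi_2)=\frac{g(\xi_1)-g(-\xi_2)}{\xi_1-(-\xi_2)}$, i.e.\ $q(\xi_1,\xi_2)=\int_0^1 g'\big(\xi_1+t(-\xi_2-\xi_1)\big)\,dt$ after a change of variables, so that $q$ is manifestly smooth on all of $\mathbb{R}^2$ (the apparent singularity at $\xi_{12}=0$ is removable, being a divided difference of a smooth function). Positivity follows because $g$ is odd and strictly increasing: for $\xi_{12}\neq0$ the numerator $\xi_1a(\xi_1)+\xi_2a(\xi_2)=g(\xi_1)-g(-\xi_2)$ has the same sign as $\xi_1-(-\xi_2)=\xi_{12}$, hence $q>0$ there, and by continuity $q\geq0$ everywhere; strict positivity on the diagonal $\xi_1+\xi_2=0$ holds since there $q(\xi_1,-\xi_1)=g'(\xi_1)=a(\xi_1)+\xi_1a'(\xi_1)>0$ (using $|\xi a'(\xi)|\lesssim a(\xi)$ only guarantees this for the specific weights in play; more simply, $g$ strictly increasing gives $g'>0$ wherever $g'$ is continuous, and one checks $g'>0$ pointwise).

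Next I would set up the size and derivative bounds. The key structural facts about $g=\xi a(\xi)$ coming from \eqref{property:a} are: $g$ is odd, $g'(\xi)=a(\xi)+\xi a'(\xi)\sim a(\xi)$ (both bounds: $\lesssim$ from $|\xi a'|\lesssim a$, and $\gtrsim$ because the doubling and lower-bound conditions on $a$ force $a$ to be roughly monotone so that $\xi a'(\xi)$ cannot cancel $a(\xi)$ — alternatively $g$ strictly increasing with $g(2\xi)\sim$ controlled gives $g'(\xi)\sim g(\xi)/\xi \sim a(\xi)$ for $|\xi|\gtrsim1$, and $g'\sim a$ on the constant region $[-1,1]$), and more generally $|g^{(k)}(\xi)|\lesssim \langle\xi\rangle^{1-k}a(\xi)$ for $1\le k\le 5$ by iterating $|\partial^j a|\lesssim\langle\xi\rangle^{-j}a$. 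For part (i), write $q(\xi_1,\xi_2)=\int_0^1 g'(\eta(t))\,dt$ with $\eta(t)$ interpolating between $\xi_1$ and $-\xi_2$; then $|\eta(t)|\le\xi_{\max}$ always, and by the roughly-monotone/doubling property $a(\eta(t))\lesssim a(\xi_{\max})$, giving $q\lesssim a(\xi_{\max})$; the lower bound $q\gtrsim a(\xi_{\max})$ comes from evaluating at whichever endpoint realizes $\xi_{\max}$ together with $g'\sim a$ and monotonicity (if $\xi_1,-\xi_2$ have opposite signs one uses that the integrand is $\gtrsim a$ on a subinterval of definite length, or more cleanly uses $q(\xi_1,\xi_2)=\frac{g(\xi_1)-g(-\xi_2)}{\xi_1+\xi_2}$ directly when the signs are opposite so the numerator is $\sim g(\xi_{\max})\sim\xi_{\max}a(\xi_{\max})$ and denominator $\lesssim\xi_{\max}$). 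For the derivative bounds in (i), differentiate under the integral sign: $\partial_1^{\gamma_1}\partial_2^{\gamma_2}q$ is a sum of integrals of $g^{(1+|\gamma|)}(\eta(t))$ against polynomials in $t$, and plugging in $|g^{(1+|\gamma|)}(\eta)|\lesssim\langle\xi_{\max}\rangle^{-|\gamma|}a(\xi_{\max})$ yields the claim; the factors $\langle\xi_1\rangle^{-\gamma_1}\langle\xi_2\rangle^{-\gamma_2}$ (rather than just $\langle\xi_{\max}\rangle^{-|\gamma|}$) require a little extra care — one uses that $\langle\xi_1\rangle^{-\gamma_1}\langle\xi_2\rangle^{-\gamma_2}\gtrsim\langle\xi_{\max}\rangle^{-|\gamma|}$ always, so this is in fact the \emph{weaker} statement and follows immediately.

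For part (ii), the gain of one extra power of $\langle\xi_{\max}\rangle^{-1}$ in $(\partial_1-\partial_2)q$ is the heart of the lemma and the step I expect to be the main obstacle. The point is that $\partial_1-\partial_2$ is differentiation along the anti-diagonal direction, which is exactly the direction \emph{transverse} to the flow of the divided-difference interpolation: with $q(\xi_1,\xi_2)=\int_0^1 g'(\xi_1 - t(\xi_1+\xi_2))\,dt$, one has $\partial_1\eta(t)=1-t$ and $\partial_2\eta(t)=-t$, so $(\partial_1-\partial_2)\eta(t)=1$, and $(\partial_1-\partial_2)q=\int_0^1 g''(\eta(t))\,dt$ — one order of differentiation of $g$ is "spent for free" by the anti-diagonal derivative, producing the extra decay $|g''(\eta)|\lesssim\langle\xi_{\max}\rangle^{-1}a(\xi_{\max})$. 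Iterating: $\partial_1^{\gamma_1}\partial_2^{\gamma_2}(\partial_1-\partial_2)q$ is a combination of integrals of $g^{(2+|\gamma|)}(\eta(t))$ against polynomials in $t$, and $|g^{(2+|\gamma|)}(\eta)|\lesssim\langle\xi_{\max}\rangle^{-1-|\gamma|}a(\xi_{\max})$ gives exactly the stated bound (again weakening $\langle\xi_{\max}\rangle^{-|\gamma|}$ to $\langle\xi_1\rangle^{-\gamma_1}\langle\xi_2\rangle^{-\gamma_2}$ for free). The only delicate points are justifying differentiation under the integral (routine, since $g\in C^\infty$ and we work on compact $\xi$-regions in each dyadic piece, or globally by the uniform bounds on $g^{(k)}$) and making the "roughly monotone" comparisons $a(\eta(t))\lesssim a(\xi_{\max})$ rigorous from the doubling hypothesis $a(2\xi)\lesssim a(\xi)$ — this last I would handle by noting $a$ non-decreasing on $[0,\infty)$ already gives $a(|\eta(t)|)\le a(\xi_{\max})$ directly, so in fact no doubling is even needed for the upper bounds, only for matching lower bounds.
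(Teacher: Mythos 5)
Your overall scheme reproduces the paper's key structure: writing $q$ as a divided difference of $g(\xi):=\xi a(\xi)$, hence $q(\xi_1,\xi_2)=\int_0^1 g'\big(-\xi_2+t\,\xi_{12}\big)\,dt$, reading off smoothness and positivity from $g'(\xi)=a(\xi)+\xi a'(\xi)\geq a(\xi)>0$ (recall $a$ even and non-decreasing on $[0,\infty)$ forces $\xi a'(\xi)\geq 0$), and the observation that $(\partial_1-\partial_2)$ differentiates the integration variable with unit speed, converting one $\xi$-derivative of $q$ into one extra derivative of $g$. This last observation is exactly the mechanism behind the paper's proof of (ii), and your account of it is accurate. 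Your argument for $q\sim a(\xi_{\max})$ is also essentially that of the paper.

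The gap is in the derivative estimates, and it is genuine. You propose to bound $\partial_1^{\gamma_1}\partial_2^{\gamma_2}q=\int_0^1 g^{(1+|\gamma|)}(\eta(t))\,(\text{poly in }t)\,dt$ by ``plugging in $|g^{(1+|\gamma|)}(\eta)|\lesssim\langle\xi_{\max}\rangle^{-|\gamma|}a(\xi_{\max})$.'' That pointwise inequality is \emph{false}: the hypothesis $|\partial^j a|\lesssim\langle\xi\rangle^{-j}a$ gives $|g^{(1+|\gamma|)}(\eta)|\lesssim\langle\eta\rangle^{-|\gamma|}a(\eta)\lesssim\langle\eta\rangle^{-|\gamma|}a(\xi_{\max})$, and since $\langle\eta\rangle\leq\langle\xi_{\max}\rangle$ on the path, $\langle\eta\rangle^{-|\gamma|}\geq\langle\xi_{\max}\rangle^{-|\gamma|}$ — the inequality points the wrong way. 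Whenever $\xi_1$ and $\xi_2$ have the same sign the path $\eta(t)=-\xi_2+t\xi_{12}$ passes through $\eta=0$, and there $g^{(1+|\gamma|)}$ has size $O(a(0))$, not $O(\langle\xi_{\max}\rangle^{-|\gamma|}a(\xi_{\max}))$. Integrating the honest pointwise bound (with $|\eta'(t)|=|\xi_{12}|\sim\xi_{\max}$ in this regime) gives only
\begin{equation*}
\int_0^1\langle\eta(t)\rangle^{-|\gamma|}\,dt\ \lesssim\ \frac{1}{\xi_{\max}}\int_{\mathbb{R}}\langle s\rangle^{-|\gamma|}\,ds\ \sim\
\left\{ \begin{aligned}
&\xi_{\max}^{-1}\log\xi_{\max}, &&|\gamma|=1,\\
&\xi_{\max}^{-1}, &&|\gamma|\geq 2,
\end{aligned}\right.
\end{equation*}
far short of $\xi_{\max}^{-|\gamma|}$. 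Concretely, for $a(\xi)=\langle\xi\rangle^{1/2}$ and $\xi_1=\xi_2=M\gg1$, the lemma requires $|\partial_1^2 q|\lesssim M^{-2}a(M)\sim M^{-3/2}$ (and this is sharp), while your pointwise bound only yields $M^{-1}a(M)\sim M^{-1/2}$. The same defect occurs in part (ii), already at $|\gamma|=0$ (a logarithmic loss). Your remark that the stated bound is ``weaker'' than the symmetric $\langle\xi_{\max}\rangle^{-|\gamma|}a(\xi_{\max})$ and therefore ``follows immediately'' is also off: the symmetric bound is actually false when, say, $|\xi_1|\ll|\xi_2|$ and $\gamma_1\geq 1$, so one must prove the stated asymmetric form directly.

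What the paper does, and what your argument is missing, is a case split in the regime $\xi_{\max}>1$: when $|\xi_{12}|\sim\xi_{\max}$ (precisely the regime where the path reaches small $\eta$), it abandons the integral representation and instead differentiates the rational expression $q=\xi_{12}^{-1}\big(\xi_1a(\xi_1)+\xi_2a(\xi_2)\big)$ via Leibniz, using $|\partial^\alpha[\xi_{12}^{-1}]|\lesssim\langle\xi_{\max}\rangle^{-1-|\alpha|}$ and $|\partial_\xi^j(\xi a(\xi))|\lesssim\langle\xi\rangle^{1-j}a(\xi)$; it reserves the integral representation for the complementary regime $|\xi_{12}|\ll\xi_{\max}$, where $|\eta(t)|\sim\xi_{\max}$ uniformly in $t$ and the pointwise bound on the integrand is legitimately $\langle\xi_{\max}\rangle^{-|\gamma|}a(\xi_{\max})$. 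The same two-regime argument is applied in part (ii) starting from the identity $(\partial_1-\partial_2)q=\dfrac{a(\xi_1)-a(\xi_2)}{\xi_{12}}+\dfrac{\xi_1a'(\xi_1)-\xi_2a'(\xi_2)}{\xi_{12}}=\displaystyle\int_0^1 g''(-\xi_2+t\xi_{12})\,dt$.
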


\begin{proof}
For $(\xi_1,\xi_2)\in \mathbb{R}^2\setminus \{ \xi_{12}=0\}$, we have
\begin{align}
q(\xi_1,\xi_2)&=\frac{\xi_1a(\xi_1)+\xi_2a(\xi_2)}{\xi_1+\xi_2} \label{def:q1} \\
&=\frac{\xi_1a(\xi_1)-(-\xi_2)a(-\xi_2)}{\xi_1-(-\xi_2)}=\int _0^1\big( \xi a(\xi )\big)' (-\xi_2+\xi_{12}t)\,dt. \label{def:q2}
\end{align}
Since $a$ is smooth and $(\xi a(\xi ))'=a(\xi)+\xi a'(\xi) \geq a(\xi )\geq a(0)>0$ ($\xi \in \mathbb{R}$), \eqref{def:q2} defines a positive smooth function on $\mathbb{R}^2$.

To show $q(\xi_1,\xi_2)\sim a(\xi_{\max})$, it suffices to consider the following three cases.
If $\xi_{\max}\leq 1$, then $q(\xi_1,\xi_2)=a(0)=a(\xi_{\max})$.
If $\xi_{\max}>1$ and $|\xi_{12}|\sim \xi_{\max}$, then the claim follows from the expression \eqref{def:q1} (and some more argument).
If $\xi_{\max}>1$ and $|\xi_{12}|\ll \xi_{\max}$, then we have $|-\xi_2+\xi_{12}t|\sim |\xi_2|\sim \xi_{\max}$ for $t\in [0,1]$, and the claim follows from \eqref{def:q2} since $(\xi a(\xi))'=a(\xi )+\xi a'(\xi) \sim a(\xi)$.

For the derivatives of $q$, we may focus on the case $\xi_{\max}>1$.
In the case $|\xi_{12}|\sim \xi_{\max}$, we first observe that
\[ \big| \partial_1^{\gamma_1}\partial_2^{\gamma_2}\big[ \xi_{12}^{-1}\big] \big| \lesssim \langle \xi_{\max}\rangle ^{-1-|\gamma|}.\]
On the other hand, using the property $|\partial_\xi ^ja(\xi )|\lesssim \langle \xi \rangle ^{-j}a(\xi )$ for $j\leq 5$, we have 
\[ |\partial_\xi^j(\xi a(\xi ))|\lesssim \langle \xi \rangle ^{1-j}a(\xi),\]
which implies that
\[ \big| \partial_1^{\gamma_1}\partial_2^{\gamma_2}(\xi_1a(\xi_1)+\xi_2a(\xi_2))\big| \left\{ \begin{alignedat}{2}
&\lesssim \langle \xi _1\rangle ^{1-j}a(\xi_{\max}) &\quad &(\gamma=(j,0)),\\
&\lesssim \langle \xi _2\rangle ^{1-j}a(\xi_{\max}) &\quad &(\gamma=(0,j)),\\
&=0 & &(\text{$\gamma_1\geq 1$ and $\gamma_2\geq 1$}).
\end{alignedat}\right.\]
The claimed estimate follows from these estimates and the expression \eqref{def:q1}.
When $|\xi_{12}|\ll \xi_{\max}$, we deduce from the expression \eqref{def:q2} that
\[ \big[ \partial_1^{\gamma_1}\partial_2^{\gamma_2}q\big] (\xi_1,\xi_2) =\int _0^1 t^{\gamma_1}(t-1)^{\gamma_2}\big[ \partial_\xi^{1+|\gamma|}(\xi a(\xi))\big] (-\xi_2+\xi_{12}t)\,dt.\]
Recalling that $|-\xi_2+\xi_{12}t|\sim \xi_{\max}$ for $t\in [0,1]$, we have
\[ \big| \big[ \partial_1^{\gamma_1}\partial_2^{\gamma_2}q\big] (\xi_1,\xi_2)\big| \lesssim \langle \xi_{\max} \rangle ^{-|\gamma|}a(\xi_{\max})\lesssim \langle \xi_1 \rangle ^{-\gamma_1}\langle \xi_2\rangle ^{-\gamma_2}a(\xi_{\max}).\]
This proves (i).
For (ii), we compute
\begin{align*}
(\partial_1q-\partial_2q)(\xi_1,\xi_2)&=\frac{a(\xi_1)-a(\xi_2)}{\xi_{12}}+\frac{\xi_1a'(\xi_1)-\xi_2a'(\xi_2)}{\xi_{12}}\\
&=\int _0^1\big( \xi a(\xi )\big)'' (-\xi_2+\xi_{12}t)\,dt.
\end{align*}
Using these expressions instead of \eqref{def:q1}--\eqref{def:q2}, the desired estimate is verified by a similar argument to (i).
\end{proof}

We have the following estimate on $E^a_1(u(t))$ for each $t$.

\begin{lemma}\label{lem:pointwise}
We have
\[ |E^a_1(f)|\lesssim \| f\|_{L^2}^2E^a_0(f)\]
for any $f\in L^2_x$ such that $E^a_0(f)<\infty$.
\end{lemma}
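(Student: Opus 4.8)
The plan is to estimate $E^a_1(f)$ directly from its defining series by bounding the multiplier $b^a_4$ and then reducing the resulting quadrilinear sum to a product of simpler quantities via Cauchy--Schwarz. First I would record the key bound on the symbol: from the last displayed formula for $b^a_4$ on $\Gamma_4\cap\{\xi_{12}\xi_{23}\neq 0\}$, together with Lemma~\ref{lem:q}~(ii) (applied with $\gamma=0$, to control $(\partial_1-\partial_2)q$ on the segment $(\xi_1+\xi_{23}t,\xi_2-\xi_{23}t)$) and Lemma~\ref{lem:q}~(i) (to see $\sqrt{q(\xi_1,\xi_2)}+\sqrt{q(\xi_3,\xi_4)}\gtrsim \sqrt{a((\xi_{12})_{\max})}$ where I write $(\xi_{12})_{\max}:=|\xi_1|\vee|\xi_2|$), one gets
\[
|b^a_4(\xi_1,\xi_2,\xi_3,\xi_4)|\lesssim |\xi_{23}|\cdot\frac{\langle (\xi_{12})_{\max}\rangle^{-2}a((\xi_{12})_{\max})^2}{a((\xi_{12})_{\max})}=|\xi_{23}|\,\langle (\xi_{12})_{\max}\rangle^{-2}a((\xi_{12})_{\max}).
\]
Since on $\Gamma_4$ we have $|\xi_{23}|=|\xi_1+\xi_4|\leq 2(|\xi_1|\vee|\xi_4|)$ and also $|\xi_{23}|\leq 2((\xi_{12})_{\max})$, this yields the clean bound $|b^a_4|\lesssim a((\xi_{12})_{\max})\lesssim a(\xi_1)+a(\xi_2)$, using that $a$ is even and non-decreasing on $[0,\infty)$.

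Next I would insert this into the series for $E^a_1(f)$. Writing $g(\xi):=\hat f(\xi)$ and $h:=\widehat{\bar f}$, we have
\[
|E^a_1(f)|\lesssim \sum_{\xi_{1234}=0}\big(a(\xi_1)+a(\xi_2)\big)|g(\xi_1)||h(\xi_2)||g(\xi_3)||h(\xi_4)|.
\]
By the symmetry $(\xi_1,\xi_2)\leftrightarrow$-type relabelling it suffices to treat the term with the factor $a(\xi_1)$. Grouping $\xi_{12}=-\xi_{34}=:\xi$ and applying Cauchy--Schwarz in the pair $(\xi_1,\xi_2)$ and in $(\xi_3,\xi_4)$ for each fixed $\xi$, then Cauchy--Schwarz in $\xi$, one bounds this by
\[
\Big\|\,\sum_{\xi_{12}=\xi}\sqrt{a(\xi_1)}|g(\xi_1)|\sqrt{a(\xi_1)}|h(\xi_2)|\,\Big\|_{\ell^2_\xi}\cdot\Big\|\sum_{\xi_{34}=-\xi}|g(\xi_3)||h(\xi_4)|\Big\|_{\ell^2_\xi},
\]
which is controlled, via the convolution (Young) inequality $\|G*H\|_{\ell^2}\leq\|G\|_{\ell^1}\|H\|_{\ell^2}$, by
\[
\big\|\sqrt{a(\cdot)}|g|\big\|_{\ell^2}\big\|\sqrt{a(\cdot)}|h|\big\|_{\ell^2}\cdot\big\||g|\big\|_{\ell^2}\big\||h|\big\|_{\ell^2}\lesssim E^a_0(f)\,\| f\|_{L^2}^2,
\]
where I used $\|\sqrt{a(D)}f\|_{L^2}=\|\sqrt{a(D)}\bar f\|_{L^2}$ and $\|f\|_{L^2}=\|\bar f\|_{L^2}$. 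This gives the claimed estimate.

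The main obstacle is the bookkeeping in the first step: one must verify that the apparently singular factor $1/\xi_{23}$ in the formula for $b^a_4$ is genuinely cancelled (it is, because of the $[\int_0^1(\partial_1-\partial_2)q\,dt]^2$ in the numerator and the final identity for $b^a_4$ already displayed in the excerpt), and that Lemma~\ref{lem:q} applies on the relevant line segment even when $(\xi_1,\xi_2)$ and $(\xi_3,\xi_4)$ have very different sizes — here the comparison $|\xi_{23}|\lesssim \langle(\xi_{12})_{\max}\rangle$ on $\Gamma_4$ is what keeps the bound from blowing up. Once the pointwise bound $|b^a_4|\lesssim a(\xi_1)+a(\xi_2)$ is in hand, the rest is a routine Cauchy--Schwarz/Young argument. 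One should also note that convergence of the series defining $E^a_1(f)$ for $f$ with $E^a_0(f)<\infty$ follows a posteriori from the same absolute estimate, so the manipulations are justified.
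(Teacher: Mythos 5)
Your proposed pointwise bound on $b^a_4$ is not correct, and the convolution step built on it also fails; both issues are essential.

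\textbf{The multiplier bound.}
In the representation
\[
b^a_4=\frac{i\beta\xi_{23}}{4}\,
\frac{\bigl[\int_0^1(\partial_1 q-\partial_2 q)(\xi_1+\xi_{23}t,\xi_2-\xi_{23}t)\,dt\bigr]^2}
{\bigl[\sqrt{q(\xi_1,\xi_2)}+\sqrt{q(\xi_3,\xi_4)}\bigr]^2},
\]
the integration segment runs from $(\xi_1,\xi_2)$ at $t=0$ to $(-\xi_4,-\xi_3)$ at $t=1$.  Lemma~\ref{lem:q}~(ii) therefore has to be applied at points where the relevant maximum is $\max_{j}|\xi_j|\sim N^*_1$, not $(\xi_{12})_{\max}:=|\xi_1|\vee|\xi_2|$, and these can differ by an arbitrarily large factor on $\Gamma_4$ (take $\xi_1=\xi_2=1$, $\xi_3=-\xi_4=N$).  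The same example shows your auxiliary inequality $|\xi_{23}|\le 2(\xi_{12})_{\max}$ is false on $\Gamma_4$, since there $|\xi_{23}|=N+1$ while $(\xi_{12})_{\max}=1$.  In fact the conclusion $|b^a_4|\lesssim a(\xi_1)+a(\xi_2)$ is itself false for fast-growing admissible symbols: for $a(\xi)=\langle\xi\rangle^{2\sigma}$ with $\sigma>1/2$ and the same frequency configuration one has $|b^a_4|\sim a(N)/N=N^{2\sigma-1}\to\infty$ but $a(\xi_1)+a(\xi_2)=O(1)$.  The correct pointwise bound, which the paper uses, is $|b^a_4|\lesssim a(N^*_1)/N^*_1$ on each dyadic block (see the proof of Lemma~\ref{lem:separation}), and the decay factor $1/N^*_1$ is precisely what you lose.

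\textbf{The Young/convolution step.}
Even granting a pointwise bound, the final step does not go through.  After grouping $\xi_{12}=\xi=-\xi_{34}$ and applying Cauchy--Schwarz in $\xi$, you need to estimate $\ell^2_\xi$ norms of sums like $\sum_{\xi_{34}=-\xi}|g(\xi_3)||h(\xi_4)|$.  Young's inequality in the form you cite, $\|G*H\|_{\ell^2}\le\|G\|_{\ell^1}\|H\|_{\ell^2}$, necessarily produces an $\ell^1$ norm, which is not controlled by $\|f\|_{L^2}$ or $E^a_0(f)$; there is no $\ell^2\times\ell^2\to\ell^2$ convolution inequality, and the chain you write (ending in four $\ell^2$ factors) is not justified.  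Also, the factor $a(\xi_1)$ is not comparable to $\sqrt{a(\xi_1)a(\xi_2)}$ when $|\xi_1|$ and $|\xi_2|$ are far apart, so the split of the weight between $g$ and $h$ is not legitimate either.  The paper's proof avoids both problems by first making a dyadic decomposition, applying the multiplier bound $|b^a_4|\lesssim a(N^*_1)/N^*_1$ blockwise, and then using H\"older plus Bernstein's inequality on the two lowest-frequency factors; this produces the harmless factor $(N^*_3N^*_4)^{1/2}$, and the $1/N^*_1$ in the multiplier bound makes the geometric sums in $N^*_3,N^*_4$ converge.  Some localization of this kind is unavoidable here, so your un-dyadicized argument cannot be repaired without essentially reproducing the paper's proof.
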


\begin{proof}
Let us begin with the dyadic decomposition:
\begin{align*}
|E^a_1(f)|\lesssim \sum_{N_1,\dots ,N_4\geq 1}\sum _{\Gamma_4}|b^a_4(\xi_1,\xi_2,\xi_3,\xi_4)|\cdot \psi_{N_1}(\xi_1)|\hat{f}(\xi_1)|\cdots \psi_{N_4}(\xi_4)|\hat{\bar{f}}(\xi_4)|.
\end{align*}
We can show that
\[ |b^a_4(\xi_1,\xi_2,\xi_3,\xi_4)|\lesssim \frac{a(N^*_1)}{N^*_1}\qquad \text{on}~(I_{N_1}\times\cdots \times I_{N_4})\cap \Gamma_4,\]
where we renumber $N_1,\dots ,N_4$ as $N^*_1,\dots ,N^*_4$ such that $N_1^*\geq N_2^*\geq N_3^*\geq N_4^*$. 
(We will actually prove a stronger result including estimates on derivatives of $b^a_4$ in the proof of Lemma~\ref{lem:separation} below.)
Then, by H\"older we have
\begin{align*}
|E^a_1(f)|&\lesssim \sum_{N^*_1\sim N^*_2\geq N^*_3\geq N^*_4\geq 1}\frac{a(N^*_1)}{N^*_1}(N^*_3N^*_4)^{1/2}\prod _{j=1}^4\| P_{N^*_j}f\|_{L^2}\\
&\lesssim \| f\|_{L^2}^2\sum_{N^*_1\sim N^*_2}a(N^*_1)\| P_{N^*_1}f\|_{L^2}\| P_{N^*_2}f\|_{L^2}\\
&\lesssim \| f\|_{L^2}^2\sum _{N}a(N)\| P_Nf\|_{L^2}^2\\
&\lesssim \| f\|_{L^2}^2E^a_0(f),
\end{align*}
as desired.
\end{proof}

By differentiating $E^a_1(u(t))$ in $t$ and substituting the equation, we obtain
\begin{align*}
\frac{d}{dt}\Big( E^a_0(u(t))+E^a_1(u(t))\Big) &=\mathcal{Q}_1+\beta \sum_{\Gamma_6}\hat{u}(\xi_1)\hat{\bar{u}}(\xi_2)\hat{u}(\xi_3)\hat{\bar{u}}(\xi_4)\hat{u}(\xi_5)\hat{\bar{u}}(\xi_6) \\
&\qquad\qquad \times \Big[ -b^a_4(\xi_{123},\xi_4,\xi_5,\xi_6)\mathrm{sgn}(\xi_{56})\xi_{123}\mathrm{sgn}(\xi_{12}) \\
&\qquad\qquad\quad -b^a_4(\xi_1,\xi_{234},\xi_5,\xi_6)\mathrm{sgn}(\xi_{56})\xi_{234}\mathrm{sgn}(\xi_{34})\\
&\qquad\qquad\quad +b^a_4(\xi_1,\xi_2,\xi_{345},\xi_6)\mathrm{sgn}(\xi_{12})\xi_{345}\mathrm{sgn}(\xi_{34})\\
&\qquad\qquad\quad +b^a_4(\xi_1,\xi_2,\xi_3,\xi_{456})\mathrm{sgn}(\xi_{12})\xi_{456}\mathrm{sgn}(\xi_{56}) \Big] \\
&=: \mathcal{Q}_1+\beta \big( \mathcal{R}_1+\dots +\mathcal{R}_4\big) .
\end{align*}
Here, it turns out that $\mathcal{R}_1=\mathcal{R}_3=\overline{\mathcal{R}_2}=\overline{\mathcal{R}_4}$.
To see this, we start with $\mathcal{R}_1$ and first change variables as $(\xi_1,\xi_2,\xi_3,\xi_4,\xi_5,\xi_6)\mapsto (\xi_3,\xi_4,\xi_5,\xi_6,\xi_1,\xi_2)$ to obtain $\mathcal{R}_3$.
We then see $\overline{\mathcal{R}_1}=\mathcal{R}_4$ and $\overline{\mathcal{R}_2}=\mathcal{R}_3$ by taking the complex conjugate, using $\overline{\hat{u}(\xi)}=\hat{\bar{u}}(-\xi )$, and changing variables as $(\xi_1,\xi_2,\xi_3,\xi_4,\xi_5,\xi_6)\mapsto (-\xi_6,-\xi_5,-\xi_4,-\xi_3,-\xi_2,-\xi_1)$.
Therefore, it suffices to consider
\begin{align*}
&\mathcal{R}(u):=-\mathcal{R}_1\\
&=\sum_{\Gamma_6}b^a_4(\xi_{123},\xi_4,\xi_5,\xi_6)\xi_{123}\mathrm{sgn}(\xi_{12})\mathrm{sgn}(\xi_{56})\hat{u}(\xi_1)\hat{\bar{u}}(\xi_2)\hat{u}(\xi_3)\hat{\bar{u}}(\xi_4)\hat{u}(\xi_5)\hat{\bar{u}}(\xi_6),
\end{align*}
which satisfies
\[ \frac{d}{dt}\Big( E^a_0(u(t))+E^a_1(u(t))\Big) =\mathcal{Q}_1(u(t))-4\beta \Re \mathcal{R}(u(t)).\]

We need to prove the following estimate on the remainder term $\mathcal{R}(u)$, which is the hardest part in the overall proof of the main theorem.
\begin{proposition}\label{prop:R6}
Let $s>1/4$.
For $0<T\leq 1$, we have
\[ \Big| \int _0^T \mathcal{R}(u(t))\,dt\Big| \lesssim \| u\|_{F^s(T)}^4\sum _{N\geq 1}a(N)\| P_Nu\|_{F_N(T)}^2.\]
\end{proposition}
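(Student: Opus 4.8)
The plan is to estimate the sextilinear form $\mathcal{R}(u)$ by a dyadic decomposition argument parallel to the one used in Proposition~\ref{prop:trilinear}, treating $\mathcal{R}(u)$ as a pairing between a "trilinear output" and three input factors, and extracting enough off-diagonal decay in the dyadic frequencies to sum. Concretely, I would first localize each $\hat{u}(\xi_j)$ to a dyadic block $\psi_{N_j}$, $j=1,\dots,6$, and also localize the internal frequencies $\xi_{123}$ and $\xi_{456}$ (which are equal up to sign on $\Gamma_6$, since $\xi_{123456}=0$, so they share a common dyadic size, call it $M$). The symbol $b^a_4(\xi_{123},\xi_4,\xi_5,\xi_6)$ multiplied by $\xi_{123}\mathrm{sgn}(\xi_{12})\mathrm{sgn}(\xi_{56})$ then needs to be understood: from the explicit formula for $b^a_4$ and Lemma~\ref{lem:q}, together with the mean-value-type representation displayed just before Lemma~\ref{lem:q}, one reads off that on the relevant dyadic region $|\xi_{123}\, b^a_4(\xi_{123},\xi_4,\xi_5,\xi_6)| \lesssim \xi_{23}'$-type gains; more precisely I expect a bound roughly of the form $|\xi_{123} b^a_4|\lesssim \min\{M, N_4^*(\{4,5,6\})\}\cdot a(\max)/\max$, reflecting the vanishing of $b^a_4$ when $\xi_{45}\to 0$ (i.e.\ $\xi_{123}+\xi_6\to 0$) and when $\xi_{56}\to 0$. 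The factor $\mathrm{sgn}(\xi_{12})\mathrm{sgn}(\xi_{56})$ is harmless as a bounded multiplier once one decomposes using a (smooth) partition handling the near-resonant sets, exactly as the Hilbert transform is handled in Lemmas~\ref{lem:bs1}, \ref{lem:bs2}.

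The next step is to rewrite $\int_0^T\mathcal{R}(u(t))\,dt$ in physical space as an integral of a product of six Littlewood--Paley pieces (with the symbol realized as a bounded Fourier multiplier acting on an appropriate sub-product), subdivide $[0,T]$ into intervals of length $\sim (N^*)^{-1}$ where $N^*$ is the largest frequency present, and on each such short interval apply the short-time Strichartz toolbox: the $L^6$ estimate (Lemma~\ref{lem:L6}) and the bilinear estimates (Lemmas~\ref{lem:bs0}, \ref{lem:bs1}, \ref{lem:bs2}, plus Remarks~\ref{rem:bs}, \ref{rem:V2}) to pass from $U^2_\Delta$/$F_N$ control to spacetime norms. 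The goal on each dyadic piece is an estimate of the shape
\[
\Big|\int_0^T \mathcal{R}_{\underline{N},M}(u)\,dt\Big| \lesssim C(\underline{N},M)\, a(N^*)\prod_{j}\|P_{N_j}u\|_{F_{N_j}(T)}
\]
with $C(\underline{N},M)$ summable after one distributes the weight $a(N^*)\sim a(N_1^*)$ over the two highest factors (using the property $a(N_1)/a(N_2)\gtrsim (N_1/N_2)^{1/2}$ from \eqref{property:a}, which converts the $a$-weight into $H^{1/2}$-type smoothing), and after the $F^s$-norms of the remaining four factors (with $s>1/4$) absorb the rest, leaving a net negative power $(N^*_{\text{third}})^{0-}$ to sum. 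The combinatorial bookkeeping is identical in spirit to Cases (I)--(II) of Proposition~\ref{prop:trilinear}: one organizes according to whether the output frequency $M$ is comparable to or much smaller than the top input frequency, and whether the high frequencies are concentrated among $\{1,2,3\}$, among $\{4,5,6\}$, or split.

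The main obstacle I anticipate is the high$\times$high$\to$low internal interaction combined with the output frequency $M$ being small: when, say, $N_1\sim N_2$ are the two largest frequencies but $\xi_{12}$ is small, or when $M=|\xi_{123}|$ is much smaller than the individual $N_1,N_2,N_3$, the naive count of short-time subintervals ($\sim N^*/M$ or $\sim N^*/(\text{second largest})$) proliferates and must be beaten by the smalling gains coming from $|\xi_{123} b^a_4|\lesssim M$, from the bilinear Strichartz $(N^*)^{-1/2}$ factors, and from the $a$-weight redistribution — and these gains are tight precisely at $s=1/4$. Getting the bookkeeping to close in every sub-case (in particular identifying which pairs of the six factors to feed into the bilinear estimate, and when one must instead dualize and estimate against a $V^2_\Delta$ function as in Case (Ib-iii) of Proposition~\ref{prop:trilinear}) is where the real work lies; the symbol estimates for $b^a_4$ and its derivatives, needed to justify treating $b^a_4$ as a bounded (or appropriately decaying) multiplier on each dyadic block, are the technical input that makes this possible and are exactly what Lemma~\ref{lem:separation} (referenced in the proof of Lemma~\ref{lem:pointwise}) is presumably set up to provide.
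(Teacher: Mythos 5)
Your strategy aligns closely with the paper's proof in its main lines: dyadic decomposition including an extra block $N$ for the internal frequency $\xi_{123}$, variable separation via Lemma~\ref{lem:separation} (which converts the symbol $\xi_{123}\,b^a_4(\xi_{123},\xi_4,\xi_5,\xi_6)$ into harmless modulations at an $\ell^1$-cost of size $N\,a(N^*)/N^*$), subdivision of $[0,T]$ into $O(N_1^*)$ intervals of length $\lesssim (N_1^*)^{-1}$, and short-time Strichartz estimates to close a block bound of the shape $a(N_1^*)(N_3^*N_4^*N_5^*N_6^*)^{(1/4)+}\prod_j\|\chi_I u_j\|_{U^2_\Delta(\mathbb{R})}$. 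You also correctly isolate the genuinely hard configuration ($N_1\sim N_2\sim N_1^*\gg N_3^*$) and recognize that the growth condition $a(N_1)/a(N_2)\gtrsim(N_1/N_2)^{1/2}$ from \eqref{property:a} is what makes it close.

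That said, what you have written is a roadmap, not a proof, and the gap is exactly the extensive case analysis (Cases (I)--(V), with subcases down to (Vd-iv-3)) where the difficulty actually resides. Several devices needed to make the cases close are not anticipated. First, the sign factors $\mathrm{sgn}(\xi_{12})\mathrm{sgn}(\xi_{56})$ cannot simply be treated as ``harmless bounded multipliers'': they are realized as Hilbert transforms placed on the specific pairs $H(u_1\bar u_2)$ and $H(u_5\bar u_6)$, and this placement is precisely what forbids the naive use of Lemma~\ref{lem:bs0} on, say, $u_1\bar u_2$ alone, forcing the modified estimates Lemmas~\ref{lem:bs1} and~\ref{lem:bs2} instead. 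Second, in several subcases the Hilbert transform has to be moved by algebraic identities (e.g.\ $\int H(f)gh\,dx = -\int f\,H(gh)\,dx$ when the frequency of $gh$ dominates, or $H^2 = -\mathrm{Id}$) before any Strichartz estimate is applicable. Third, in case (IIIc-ii) one must interpolate two different bilinear-Strichartz bounds (one from Lemma~\ref{lem:bs0}, one from Lemma~\ref{lem:bs2}) to hit the exponent $(N_3^*\cdots N_6^*)^{1/4}/N_1^*$, and in case (Vd-iv-3) one must decompose $H(u_5\bar u_6)$ over intermediate scales $N_3\lesssim K\lesssim N_5$ and sum a $K$-geometric series, at the cost of a factor $(N_3^*)^{0+}$. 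Finally, the duality against $V^2_\Delta$ you flagged is actually not needed here (it is needed in the trilinear estimate, not in the sextilinear one): the integral $\int_I\int H(u_1\bar u_2)u_3\bar u_4 H(u_5\bar u_6)\,dx\,dt$ already pairs six $U^2_\Delta$-controlled factors. So while your plan is sound and identifies the right tools, the core of the proposition's proof is unexecuted.
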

Let us postpone the proof of this proposition and verify Proposition~\ref{prop:energy2}.
\begin{proof}[Proof of Proposition~\ref{prop:energy2}]
By Lemma~\ref{lem:pointwise}, Proposition~\ref{prop:R6}, and the fact that any smooth solution of \eqref{kdnls} reduces its $L^2$ norm, we have
\begin{align*}
&\sup_{t\in [0,T]}E^a_0(u(t))\\
&\leq E^a_0(u_0)+\sup_{t\in [0,T]}|E^a_1(u(t))|+C\| u\|_{F^s(T)}^4\sum _{N\geq 1}a(N)\| P_Nu\|_{F_N(T)}^2\\
&\leq E^a_0(u_0)+C\| u(0)\|_{L^2}^2\sup_{t\in [0,T]}E^a_0(u(t))+C\| u\|_{F^s(T)}^4\sum _{N\geq 1}a(N)\| P_Nu\|_{F_N(T)}^2.
\end{align*}
Assuming $\| u(0)\|_{L^2}\ll 1$, this yields the claimed estimate.
\end{proof}

\subsection{Estimate on the remainder term}

It remains to prove Proposition~\ref{prop:R6}.
A difficulty here is that we cannot directly apply pointwise bounds on the multipliers (as we did in the proof of Lemma~\ref{lem:pointwise} above), because $u\in F^s(T)$ does not in general imply $\mathcal{F}^{-1}_\xi [|\hat{u}(t,\xi )|]\in F^s(T)$.
Indeed, linear solutions $u=e^{it\partial_x^2}u_0$ can be considered as counterexamples. 

We prepare the following lemma, which allows us to separate variables in the multiplier $b^a_4$.
This idea has also been used in \cite{KT07,H12,S21}. 
\begin{lemma}\label{lem:separation}
Let $N_1,\dots ,N_4$ be dyadic integers such that $N^*_1\sim N^*_2$, where $N^*_1,\dots ,N^*_4$ denote the numbers $N_1,\dots ,N_4$ rearranged in decreasing order.
Let $\tilde{\psi}_1,\dots ,\tilde{\psi}_4\in C^\infty_0(\mathbb{R})$ be bump functions such that $\mathrm{supp}\,(\tilde{\psi}_j(\cdot /N_j))\subset \mathcal{I}_{N_j}$, with $\mathcal{I}_{N_j}$ defined as in Definition~\ref{defn:spaces} (i.e., $\tilde{\psi}_j$ is supported in $[-2,2]$ if $N_j=1$ and in $[-2,2]\setminus (-\frac12,\frac12)$ if $N_j>1$).

Then, there is a sequence $\hat{c}\in \ell ^1(\mathbb{Z}^4)$ such that
\begin{align*}
&b^a_4(\xi_1,\xi_2,\xi_3,\xi_4)\xi_1\cdot \tilde{\psi}_1\Big( \frac{\xi_1}{N_1}\Big) \tilde{\psi}_2\Big( \frac{\xi_2}{N_2}\Big) \tilde{\psi}_3\Big( \frac{\xi_3}{N_3}\Big) \tilde{\psi}_4\Big( \frac{\xi_4}{N_4}\Big) \\
&=\sum _{k_1,\dots ,k_4\in \mathbb{Z}}\hat{c}(k_1,k_2,k_3,k_4)e^{i\big( \frac{k_1}{N_1}\xi_1+\frac{k_2}{N_2}\xi_2+\frac{k_3}{N_3}\xi_3+\frac{k_4}{N_4}\xi_4\big)},\qquad (\xi _1,\dots ,\xi_4)\in \Gamma_4
\end{align*}
and
\[ \sum _{k_1,\dots ,k_4\in \mathbb{Z}}\big| \hat{c}(k_1,k_2,k_3,k_4)\big| \lesssim N_1\frac{a(N^*_1)}{N^*_1}.\]
\end{lemma}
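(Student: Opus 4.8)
The plan is to realize the left-hand side as (the restriction to $\Gamma_4$ of) a function that, after an appropriate rescaling of the variables, is smooth and compactly supported, so that its Fourier series (in the torus variables $\xi_j/N_j \bmod 2\pi\mathbb{Z}$, suitably normalized) converges absolutely, with the $\ell^1$ norm of the Fourier coefficients controlled by a finite number of sup-norms of derivatives. Concretely, I would first eliminate $\xi_4$ using $\xi_4 = -\xi_{123}$ on $\Gamma_4$, so that the multiplier becomes a function $M(\xi_1,\xi_2,\xi_3)$ on $\mathbb{R}^3$, namely
\[
M(\xi_1,\xi_2,\xi_3):=b^a_4(\xi_1,\xi_2,\xi_3,-\xi_{123})\,\xi_1\,\tilde\psi_1\!\Big(\tfrac{\xi_1}{N_1}\Big)\tilde\psi_2\!\Big(\tfrac{\xi_2}{N_2}\Big)\tilde\psi_3\!\Big(\tfrac{\xi_3}{N_3}\Big)\tilde\psi_4\!\Big(\tfrac{-\xi_{123}}{N_4}\Big).
\]
Then I would rescale, setting $\xi_j = N_j \zeta_j$, and note that each cutoff $\tilde\psi_j$ localizes $\zeta_j$ to a fixed compact set (essentially $[-2,2]$), and $\tilde\psi_4(-\xi_{123}/N_4)$ localizes the linear combination $-(N_1\zeta_1+N_2\zeta_2+N_3\zeta_3)/N_4$ to $[-2,2]$; on the support, all the $\zeta_j$ lie in a fixed box $Q=[-2,2]^3$. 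The rescaled multiplier $\widetilde M(\zeta_1,\zeta_2,\zeta_3):=M(N_1\zeta_1,N_2\zeta_2,N_3\zeta_3)$ is then a $C^\infty$ function supported in $Q$, and I expand it in the Fourier series associated to the box (side length, say, $8$), so that the Fourier modes are $e^{2\pi i(m_1\zeta_1+m_2\zeta_2+m_3\zeta_3)/8}$; undoing the rescaling turns these into $e^{i(\frac{k_1}{N_1}\xi_1+\frac{k_2}{N_2}\xi_2+\frac{k_3}{N_3}\xi_3)}$ with $k_j = 2\pi m_j/8$ — and here the integer structure needs a small cosmetic fix (enlarge the box to have side $2\pi$ times an integer, or absorb rational multiples into the definition, or simply note the statement allows $k_j\in\mathbb{Z}$ after rescaling the $\tilde\psi_j$; I would match conventions so that the exponents come out exactly as in the statement, re-introducing $\xi_4=-\xi_{123}$ to distribute the phase symmetrically if desired). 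The absolute summability and the $\ell^1$ bound then follow from the standard fact that $\sum_{m}|\widehat{g}(m)| \lesssim \sum_{|\gamma|\le 4}\|\partial^\gamma g\|_{L^\infty}$ for $g\in C^\infty_0$ of a fixed box in $\mathbb{R}^3$ (integration by parts in each variable four times, the exponent $4 = \lfloor 3/1\rfloor + 1$ guaranteeing $\ell^1$ summability of $\langle m\rangle^{-4}$ in three dimensions).

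The heart of the matter is therefore the derivative bound
\[
\big|\partial_{\zeta_1}^{\gamma_1}\partial_{\zeta_2}^{\gamma_2}\partial_{\zeta_3}^{\gamma_3}\widetilde M(\zeta_1,\zeta_2,\zeta_3)\big| \lesssim N_1\,\frac{a(N^*_1)}{N^*_1},\qquad |\gamma|\le 4,
\]
uniformly on $Q$, with implicit constant independent of $N_1,\dots,N_4$. By the chain rule, $\partial_{\zeta_j}^{\gamma_j}$ produces a factor $N_j^{\gamma_j}$ times $\partial_{\xi_j}^{\gamma_j}$ acting on $M$; since $N_j\le 2N^*_1 \lesssim N^*_1$ and, on the support, $\langle\xi_j\rangle\sim N_j$ whenever $N_j>1$, these factors are absorbed provided I can show
\[
\big|\partial_{\xi_1}^{\gamma_1}\partial_{\xi_2}^{\gamma_2}\partial_{\xi_3}^{\gamma_3}\big(b^a_4(\xi_1,\xi_2,\xi_3,-\xi_{123})\,\xi_1\big)\big| \lesssim \langle\xi_1\rangle^{-\gamma_1}\langle\xi_2\rangle^{-\gamma_2}\langle\xi_3\rangle^{-\gamma_3}\,\frac{a(N^*_1)}{N^*_1}\cdot N^*_1
\]
on the relevant frequency region (here the extra $N^*_1$ comes from the stray factor $\xi_1$, bounded by $\langle\xi_1\rangle\lesssim N_1\lesssim N^*_1$, and is what ultimately yields the $N_1$ rather than $N^*_1$ on the right-hand side of the lemma, once the $\langle\xi_1\rangle^{-\gamma_1}$ gain is traded against the $N_1^{\gamma_1}$ from the chain rule). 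This reduces to proving the pointwise and derivative bounds
\[
\big|\partial_1^{\gamma_1}\partial_2^{\gamma_2}\partial_3^{\gamma_3}b^a_4\big|(\xi_1,\xi_2,\xi_3,\xi_4) \lesssim \langle\xi_1\rangle^{-\gamma_1}\langle\xi_2\rangle^{-\gamma_2}\langle\xi_3\rangle^{-\gamma_3}\,\frac{a(N^*_1)}{N^*_1}
\]
on $\Gamma_4$ (with $N^*_1\sim N^*_2$), which is exactly the "stronger result" promised in the proof of Lemma~\ref{lem:pointwise}. I would establish this from the last displayed formula for $b^a_4$ in the text,
\[
b^a_4 = \frac{i\beta\,\xi_{23}}{4}\cdot\frac{\big[\int_0^1(\partial_1 q-\partial_2 q)(\xi_1+\xi_{23}t,\xi_2-\xi_{23}t)\,dt\big]^2}{\big[\sqrt{q(\xi_1,\xi_2)}+\sqrt{q(\xi_3,\xi_4)}\big]^2},
\]
together with the bounds on $q$, $\partial^\gamma q$, and $(\partial_1-\partial_2)q$ from Lemma~\ref{lem:q}: the factor $\xi_{23}$ supplies one power, the squared integral of $(\partial_1-\partial_2)q$ supplies $\langle\xi_{\max}\rangle^{-2}a(\xi_{\max})^2$, the denominator supplies $a(\xi_{\max})^{-1}$ (using $q\sim a(\xi_{\max})\gtrsim a(N^*_1)$ since two of the four frequencies are comparable to $N^*_1$ and $a$ is nondecreasing), and the net is $|\xi_{23}|\langle\xi_{\max}\rangle^{-2}a(N^*_1)\lesssim a(N^*_1)/N^*_1$ because $|\xi_{23}|\le 2\xi_{\max}\lesssim N^*_1$. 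For the derivatives, I would differentiate this quotient by the Leibniz and quotient rules, observe that differentiating the numerator integrand in $\xi_j$ brings in a further $\langle\xi_j\rangle^{-1}$ (from Lemma~\ref{lem:q}(ii), including the chain-rule shifts $\xi_1+\xi_{23}t,\xi_2-\xi_{23}t$, whose arguments are still $\sim\xi_{\max}$ in the relevant regime), that differentiating $\sqrt{q}$ in the denominator is harmless because $q$ is bounded below and its logarithmic derivatives decay (Lemma~\ref{lem:q}(i)), and that differentiating $\xi_{23}$ only lowers the power of $\xi_{23}$; a routine but slightly tedious case analysis on which of $N_1,\dots,N_4$ are $\sim N^*_1$ (at least two are) then closes the estimate. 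The main obstacle, then, is purely bookkeeping: organizing this multi-index Leibniz expansion of a quotient of composed functions and checking in each frequency configuration that no derivative loses more than the $\langle\xi_j\rangle^{-\gamma_j}$ one is allowed to lose — there is no conceptual difficulty once Lemma~\ref{lem:q} is in hand, but one must be careful near $\xi_{23}=0$ (where smoothness of $b^a_4$ across $\Gamma_4$, already established in the text, guarantees the derivative bounds pass to the limit by continuity) and must track the single uncancelled power of $\xi_1$ responsible for the "$N_1$" rather than "$N^*_1$" in the final bound.
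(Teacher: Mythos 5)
Your plan has a genuine gap in the reduction to three variables. After substituting $\xi_4=-\xi_{123}$, the factor $\tilde\psi_4(-\xi_{123}/N_4)$ becomes a cutoff in the linear combination $\xi_{123}$, and its derivatives in $\xi_j$ ($j=1,2,3$) are of size $N_4^{-1}$, not $\langle\xi_j\rangle^{-1}$. When, say, $N_1\sim N_2\sim N^*_1$ and $N_4\ll N^*_1$ (a configuration that does occur under the standing hypothesis $N^*_1\sim N^*_2$), the cutoff $\tilde\psi_4(-\xi_{123}/N_4)$ is a thin slab oblique to the coordinate axes, and after the rescaling $\xi_j=N_j\zeta_j$ its $\zeta_1$-derivatives are of size $N_1/N_4$, which can be as large as $N^*_1/N^*_4$. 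Consequently the rescaled multiplier $\widetilde M(\zeta_1,\zeta_2,\zeta_3)$ does \emph{not} satisfy the uniform $C^4$ bound you need; your ``these factors are absorbed'' step breaks on the $\tilde\psi_4$ factor, and the $\ell^1$ estimate would incur an uncontrolled loss of $(N^*_1/N^*_4)^4$. The estimate you reduce to — namely the bound on $\partial^\gamma\big(b^a_4(\xi_1,\xi_2,\xi_3,-\xi_{123})\xi_1\big)$ in terms of $\langle\xi_j\rangle^{-\gamma_j}$ — is itself also problematic, since in $b^a_4(\xi_1,\xi_2,\xi_3,-\xi_{123})$ each $\partial_{\xi_j}$ hits the fourth slot as well, producing a $\langle\xi_4\rangle^{-1}\sim N_4^{-1}$ rather than the desired $\langle\xi_j\rangle^{-1}$.

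The paper avoids this precisely by \emph{not} eliminating $\xi_4$. It constructs a genuine smooth extension $\tilde b^a_4(\xi_1,\dots,\xi_4)$ of $b^a_4$ to the full rectangle $\mathcal{I}_{N_1}\times\cdots\times\mathcal{I}_{N_4}$ (so that each $\xi_j$ is an independent variable), satisfying the anisotropic derivative bound $|\partial^\gamma\tilde b^a_4|\lesssim N_1^{-\gamma_1}\cdots N_4^{-\gamma_4}\,a(N^*_1)/N^*_1$. The extension is built by patching together three explicit formulas $\tilde b_1,\tilde b_2,\tilde b_3$ (valid respectively when $|\xi_{23}|\sim N^*_1$, $|\xi_{12}|\sim N^*_1$, and $N_1\sim N_2\sim N^*_1$ with $|\xi_{23}|\ll N^*_1$) with smooth cutoffs in $\xi_{12},\xi_{23}$. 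Then $c(\eta)=\tilde b^a_4(N_1\eta_1,\dots,N_4\eta_4)N_1\eta_1\prod\tilde\psi_j(\eta_j)$ is a smooth compactly supported function of \emph{four} variables whose cutoffs are a genuine tensor product, so the rescaling is clean. The Fourier expansion is taken on $\mathbb{Z}^4$, and the identity on $\Gamma_4$ is obtained by restriction \emph{after} the expansion. This is the missing idea in your plan: to preserve the tensor structure of the frequency cutoffs you must keep four independent variables, which in turn requires producing an off-$\Gamma_4$ extension of $b^a_4$ with coordinatewise derivative bounds — the construction of that extension is where the real work of the lemma lies, not in the Fourier-series bookkeeping.
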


\begin{proof}
Following the argument in \cite{H12}, we first construct a smooth function $\tilde{b}^a_4(\xi_1,\dots ,\xi_4)$ on $\mathcal{I}_{N_1}\times \cdots \times \mathcal{I}_{N_4}$ which extends $b^a_4(\xi_1,\dots ,\xi_4)$ (defined on $\Gamma_4$) and satisfies
\begin{equation}\label{est:b}
\big| \partial_1^{\gamma_1}\partial_2^{\gamma_2}\partial_3^{\gamma_3}\partial_4^{\gamma_4}\tilde{b}^a_4(\xi_1,\xi_2,\xi_3,\xi_4)\big| \lesssim \frac{1}{N_1^{\gamma_1}N_2^{\gamma_2}N_3^{\gamma_3}N_4^{\gamma_4}}\frac{a(N^*_1)}{N^*_1}\quad (0\leq |\gamma |\leq 3).
\end{equation}
We use the following extensions of $b^a_4$:
\begin{align*}
\tilde{b}_1(\xi_1,\xi_2,\xi_3,\xi_4)&=\frac{i\beta}{4\xi_{23}}\frac{\big[ q(\xi_1,\xi_2)-q(\xi_3,\xi_4)\big] ^2}{\big[ \sqrt{q(\xi_1,\xi_2)}+\sqrt{q(\xi_3,\xi_4)}\big] ^2}\qquad (\text{on}~\mathbb{R}^4\setminus \{ \xi_{23}=0\} ),\\
\tilde{b}_2(\xi_1,\xi_2,\xi_3,\xi_4)&=\frac{i\beta \xi_{23}}{4\xi_{12}^2}\frac{\big[ q(\xi_2,\xi_3)-q(\xi_1,\xi_4)\big] ^2}{\big[ \sqrt{q(\xi_1,\xi_2)}+\sqrt{q(\xi_3,\xi_4)}\big] ^2}\qquad (\text{on}~\mathbb{R}^4\setminus \{ \xi_{12}=0\} ),\\
\tilde{b}_3(\xi_1,\xi_2,\xi_3,\xi_4)&=\frac{i\beta \xi_{23}}{4}\frac{\Big[ \displaystyle\int _0^1\big( \partial_1q-\partial_2q\big) (\xi_1+\xi_{23}t,\xi_2-\xi_{23}t)\,dt\Big] ^2}{\big[ \sqrt{q(\xi_1,\xi_2)}+\sqrt{q(\xi_3,\xi_4)}\big] ^2}\qquad (\text{on}~\mathbb{R}^4).
\end{align*}
From Lemma~\ref{lem:q}, we can show that
\begin{align*}
\bullet ~&\left| \partial_1^{\gamma_1}\partial_2^{\gamma_2}\partial_3^{\gamma_3}\partial_4^{\gamma_4}\begin{pmatrix} \big[ q(\xi_1,\xi_2)-q(\xi_3,\xi_4)\big] ^2\\ \big[ q(\xi_2,\xi_3)-q(\xi_1,\xi_4)\big] ^2\end{pmatrix} \right| \lesssim \frac{a(N^*_1)^2}{N_1^{\gamma_1}N_2^{\gamma_2}N_3^{\gamma_3}N_4^{\gamma_4}};\\
\bullet ~&\big| \partial_1^{\gamma_1}\partial_2^{\gamma_2}\sqrt{q(\xi_1,\xi_2)}\big| \lesssim \langle \xi_1\rangle^{-\gamma_1}\langle \xi_2\rangle^{-\gamma_2}\sqrt{a(|\xi_1|\vee |\xi_2|)}\qquad (\xi_1,\xi_2\in \mathbb{R});\\
\bullet ~&\big| \partial_1^{\gamma_1}\partial_2^{\gamma_2}\partial_3^{\gamma_3}\partial_4^{\gamma_4}\big[ \sqrt{q(\xi_1,\xi_2)}+\sqrt{q(\xi_3,\xi_4)}\big] ^{-2}\big| \lesssim \frac{1}{N_1^{\gamma_1}N_2^{\gamma_2}N_3^{\gamma_3}N_4^{\gamma_4}a(N^*_1)};\\
\bullet ~&\text{If $N_1\sim N_2\sim N^*_1$ and $|\xi_{23}|\ll N^*_1$, then}\\
&\left| \partial_1^{\gamma_1}\partial_2^{\gamma_2}\partial_3^{\gamma_3}\partial_4^{\gamma_4}\Big[ \displaystyle\int _0^1\big( \partial_1q-\partial_2q\big) (\xi_1+\xi_{23}t,\xi_2-\xi_{23}t)\,dt\Big] ^2\right| \lesssim \frac{a(N^*_1)^2}{(N^*_1)^{2+|\gamma|}}.
\end{align*}
Using these bounds, we see that the desired estimates \eqref{est:b} hold
\begin{itemize}
\item for $\tilde{b}_1$ if $|\xi_{23}|\sim N^*_1$;
\item for $\tilde{b}_2$ if $|\xi_{12}|\sim N^*_1$;
\item for $\tilde{b}_3$ if $N_1\sim N_2\sim N^*_1$ and $|\xi_{23}|\ll N^*_1$.
\end{itemize}
Note that $|\xi_{12}|\vee |\xi_{23}|\ll N^*_1$ implies $N_1\sim N_2\sim N_3\sim N^*_1$ under the hypothesis $N^*_1\sim N^*_2$.
Therefore, we can define $\tilde{b}^a_4$ by
\[ \tilde{b}^a_4:=\bigg[ 1- \eta \Big( \frac{\xi _{23}}{\tfrac{1}{100}N^*_1}\Big) \bigg] \tilde{b}_1+\bigg[ 1- \eta \Big( \frac{\xi _{12}}{\tfrac{1}{100}N^*_1}\Big) \bigg] \eta \Big( \frac{\xi _{23}}{\tfrac{1}{100}N^*_1}\Big) \tilde{b}_2+\eta \Big( \frac{\xi _{12}}{\tfrac{1}{100}N^*_1}\Big) \eta \Big( \frac{\xi _{23}}{\tfrac{1}{100}N^*_1}\Big) \tilde{b}_3,\]
for instance, where $\eta$ is defined as in Definition~\ref{defn:spaces}.
It is clear that the above defined $\tilde{b}^a_4$ coincides with $b^a_4$ on $\Gamma_4$ and satisfies \eqref{est:b}.

Now, we define
\[ c(\eta_1,\eta_2,\eta_3,\eta_4):=\tilde{b}^a_4(N_1\eta_1,N_2\eta_2,N_3\eta_3,N_4\eta_4)N_1\eta_1\cdot \tilde{\psi}_1(\eta_1)\tilde{\psi}_2(\eta_2)\tilde{\psi}_3(\eta_3)\tilde{\psi}_4(\eta_4),\]
which is a smooth function supported in $[-2,2]^4$ and thus can be extended to a $2\pi$-periodic smooth function on $\mathbb{R}^4$.
Let $\hat{c}(k_1,k_2,k_3,k_4)$ be the Fourier coefficients of $c$, then the claimed identity follows from the Fourier series expansion and the restriction onto $\Gamma_4$.
Moreover, we deduce from \eqref{est:b} that
\[ \| c\|_{C^3([-\pi ,\pi]^4)}:=\max_{\eta \in [-\pi ,\pi ]^4,\,|\gamma|\leq 3}\big| \partial_1^{\gamma_1}\partial_2^{\gamma_2}\partial_3^{\gamma_3}\partial_4^{\gamma_4}c(\eta_1,\eta_2,\eta_3,\eta_4)\big| \lesssim N_1\frac{a(N^*_1)}{N^*_1},\]
which then implies that
\[ \| \hat{c}\|_{\ell ^1(\mathbb{Z}^4)}\lesssim N_1\frac{a(N^*_1)}{N^*_1}.\]
This completes the proof.
\end{proof}

We are now in a position to prove Proposition~\ref{prop:R6}.
\begin{proof}[Proof of Proposition~\ref{prop:R6}]
As usual, we decompose the sum into dyadic pieces:
\begin{align*}
&\int _0^T \mathcal{R}(u(t))\,dt\\
&=\sum _{N_1,\dots ,N_6,N\geq 1}\int_0^T \sum_{\Gamma_6}b^a_4(\xi_{123},\xi_4,\xi_5,\xi_6)\xi_{123}\psi_{N}(\xi_{123})\tilde{\psi}_{N_4}(\xi_4)\tilde{\psi}_{N_5}(\xi_5)\tilde{\psi}_{N_6}(\xi_6)\\
&\qquad\qquad \times \mathrm{sgn}(\xi_{12})\mathrm{sgn}(\xi_{56})(\psi_{N_1}\hat{u})(t,\xi_1)(\psi_{N_2}\hat{\bar{u}})(t,\xi_2)\cdots (\psi_{N_6}\hat{\bar{u}})(t,\xi_6)\,dt,
\end{align*}
where for $j=4,5,6$, $\tilde{\psi}_{N_j}(\xi_j):=\tilde{\psi}_j(\xi_j/N_j)$ and $\tilde{\psi}_j\in C^\infty_0(\mathbb{R})$ is chosen so that $\tilde{\psi}_j(\cdot /N_j)\equiv 1$ on $\mathrm{supp}\,(\psi_{N_j})$ and $\mathrm{supp}\,(\tilde{\psi}_j(\cdot /N_j))\subset \mathcal{I}_{N_j}$, with $\psi_{N_j}$, $\mathcal{I}_{N_j}$ defined as in Definition~\ref{defn:spaces}.
In the following, we write $N^*_1,\dots ,N^*_6$ to denote the numbers $N_1,\dots ,N_6$ rearranged in decreasing order.
Note that the range of $N_1,\dots ,N_6,N$ can be restricted to
\begin{equation}\label{cond:Nj}
N^*_1\sim N^*_2\quad \text{and}\quad N\lesssim \min \big\{ \max \{ N_1,N_2,N_3\},\,\max \{ N_4,N_5,N_6\} \big\} .
\end{equation}
Applying Lemma~\ref{lem:separation} for each $(N, N_4, N_5, N_6)$, we have
\begin{align*}
&\int _0^T \mathcal{R}(u(t))\,dt\\
&=\sum _{N_1,\dots ,N_6,N\geq 1}\sum _{k,k_4,k_5,k_6\in \mathbb{Z}}\hat{c}_{N,N_4,N_5,N_6}(k,k_4,k_5,k_6)\int_0^T \sum_{\Gamma_6}\mathrm{sgn}(\xi_{12})\mathrm{sgn}(\xi_{56})\\
&\qquad\qquad \times e^{i\big( \frac{k}{N}\xi_1+\frac{k}{N}\xi_2+\frac{k}{N}\xi_3+\frac{k_4}{N_4}\xi_4+\frac{k_5}{N_5}\xi_5+\frac{k_6}{N_6}\xi_6\big)}\hat{u}_1(t,\xi_1)\hat{\bar{u}}_2(t,\xi_2)\cdots \hat{\bar{u}}_6(t,\xi_6)\,dt,
\end{align*}
where we write $u_j=P_{N_j}u$ for brevity, $j=1,\dots ,6$, and 
\begin{equation}\label{multiplier}
\sum _{k,k_4,k_5,k_6\in \mathbb{Z}}\big| \hat{c}_{N,N_4,N_5,N_6}(k,k_4,k_5,k_6)\big| \lesssim \frac{Na(\max \{ N,N_4,N_5,N_6\})}{\max \{ N,N_4,N_5,N_6\}}.
\end{equation}
Since multiplication by $e^{i\theta \xi}$ on the Fourier side does not change the $F_{N_j}(T)$ norm of $u_j$, the proof is reduced to estimating
\begin{equation*}
\begin{aligned}[b]
&\Big| \int_0^T \sum_{\Gamma_6}\mathrm{sgn}(\xi_{12})\mathrm{sgn}(\xi_{56})\hat{u}_1(t,\xi_1)\hat{\bar{u}}_2(t,\xi_2)\cdots \hat{\bar{u}}_6(t,\xi_6)\,dt\Big| \\
&\quad =\Big| \int_0^T \int_{\mathbb{T}}H(u_1\bar{u}_2)u_3\bar{u}_4H(u_5\bar{u}_6)\,dx\,dt\Big| .
\end{aligned}
\end{equation*}

In order to obtain a bound with the short-time norms, we have to divide the time interval into small sub-intervals of length $\leq (N^*_1)^{-1}$ (denoted by $I$), which gives a factor of $O(N^*_1)$.
The strategy in the previous results on DNLS \cite{G11,S21} is to use two bilinear Strichartz and two $L^\infty$ embeddings if $N^*_1\sim N^*_2\gg N^*_3$ or $N^*_1\sim N^*_3\gg N^*_4$; one bilinear Strichartz, one $L^\infty$ embedding and three $L^6$ Strichartz if $N^*_1\sim N^*_4\gg N^*_5$ or $N^*_1\sim N^*_5\gg N^*_6$; and six $L^6$ Strichartz if $N^*_1\sim N^*_6$.
For KDNLS, there are some cases where the same argument does not work due to the presence of the Hilbert transformations.
For instance, we cannot use the standard biliear Strichartz estimate (Lemma~\ref{lem:bs0}) with only one of $u_1$ and $u_2$ involved.
We can indeed use the modified bilinear Strichartz estimates (Lemmas~\ref{lem:bs1}, \ref{lem:bs2}) instead, but the argument will be even more complicated.

The goal is to prove 
\begin{equation}\label{claim:block}
\begin{aligned}
&\text{R.H.S.~of \eqref{multiplier}}\times N^*_1\Big| \int_I \int_{\mathbb{T}}H(u_1\bar{u}_2)u_3\bar{u}_4H(u_5\bar{u}_6)\,dx\,dt\Big| \\
&\quad \lesssim a(N^*_1)\big( N^*_3N^*_4N^*_5N^*_6\big) ^{\frac14+}\prod _{j=1}^6\| \chi_Iu_j\|_{U^2_\Delta(\mathbb{R})}
\end{aligned}
\end{equation}
for each $N_1,\dots ,N_6,N\geq 1$ satisfying \eqref{cond:Nj} and each interval $I=[a,b)\subset [0,T]$ with $|I|\leq (N^*_1)^{-1}$.
In fact, this is enough to carry out the summations in $N_1,\dots ,N_6$ when $s>1/4$.
For summability in $N$, notice that either $N\sim N^*_1$ or $N\lesssim N^*_3$ holds.

\choice%
{In the rest of the proof, we shall establish \eqref{claim:block}, dividing into several cases.
We will estimate the right-hand side of \eqref{multiplier} roughly by $a(N^*_1)$, except for Case (Vd).}%
{In the rest of the proof, we shall establish \eqref{claim:block}, dividing into the following five cases:
\[ \begin{alignedat}{3}
\text{(I)}&~N^*_1\sim N^*_6; &\qquad \text{(II)}&~N^*_1\sim N^*_5\gg N^*_6; &\qquad \text{(III)}&~N^*_1\sim N^*_4\gg N^*_5; \\
\text{(IV)}&~N^*_1\sim N^*_3\gg N^*_4; &\qquad \text{(V)}&~N^*_1\sim N^*_2\gg N^*_3. && \end{alignedat}\]
We will estimate the right-hand side of \eqref{multiplier} roughly by $a(N^*_1)$, except for a subcase (Vd) of Case (V).
For simplicity, we will see in detail only Case (III) and Case (V).%
\footnote{An extended version of this article can be found in \url{https://arxiv.org/pdf/2209.00000v1.pdf}, where we keep a complete proof for the reader's convenience.}
Case (I) is the easiest, and it is treated by the $L^6$ Strichartz estimate (Lemma~\ref{lem:L6}).
Case (II) is a little more involved, and we need the bilinear Strichartz estimate (Lemma~\ref{lem:bs0}).
In Case (IV) we also use the first modified bilinear Strichartz estimate (Lemma~\ref{lem:bs1}), but the argument is similar to that in Case (III).}%
\choice%
{\medskip
\noindent
\textbf{Case (I)} $N^*_1\sim N^*_6$.

This is the easiest case.
We use H\"older, boundedness of the Hilbert transformation, and the $L^6$ Strichartz estimate (Lemma~\ref{lem:L6}) as follows:
\begin{align*}
\mathcal{I}&:=\Big| \int_I \int_{\mathbb{T}}H(u_1\bar{u}_2)u_3\bar{u}_4H(u_5\bar{u}_6)\,dx\,dt\Big| \\
&\;\leq \| H(u_1\bar{u}_2)\|_{L^3(I;L^3)}\| u_3\| _{L^6(I;L^6)}\| u_4\| _{L^6(I;L^6)}\| H(u_5\bar{u}_6)\|_{L^3(I;L^3)} \\
&\;\lesssim \prod_{j=1}^6\| u_j\| _{L^6(I;L^6)}\lesssim \prod_{j=1}^6\| \chi_Iu_j\|_{U^2_\Delta(\mathbb{R})}.
\end{align*}
Since $N^*_1\sim (N^*_3\cdots N^*_6)^{1/4}$, we obtain \eqref{claim:block}.

\medskip
\noindent
\textbf{Case (II)} $N^*_1\sim N^*_5\gg N^*_6$.

Let us focus on the case $N_6=N^*_6$, but the other cases can be treated in a similar way.
Since $N_5\sim N^*_1\gg N_6$, the frequency for the product $H(u_5\bar{u}_6)$ has the size $\sim N^*_1$.
This allows us to divide the integral as
\begin{align*}
\mathcal{I}&\leq \Big| \int_I \int_{\mathbb{T}}HP_{\gtrsim N^*_1}(u_1\bar{u}_2)u_3\bar{u}_4H(u_5\bar{u}_6)\,dx\,dt\Big| \\
&\quad +\Big| \int_I \int_{\mathbb{T}}HP_{\ll N^*_1}(u_1\bar{u}_2)P_{\gtrsim N^*_1}(u_3\bar{u}_4)H(u_5\bar{u}_6)\,dx\,dt\Big| .
\end{align*}
These two terms can be estimated by H\"older, Lemma~\ref{lem:L6}, bilinear Strichartz (Lemma~\ref{lem:bs0}) and $L^\infty$ embedding.
For instance, the last term is bounded by
\begin{align*}
&\| HP_{\ll N^*_1}(u_1\bar{u}_2)\| _{L^3(I;L^3)}\| P_{\gtrsim N^*_1}(u_3\bar{u}_4)\| _{L^2(I;L^2)}\| H(u_5\bar{u}_6)\|_{L^6(I;L^6)}\\
&\quad \lesssim \prod_{j=1,2,5}\| u_j\|_{L^6(I;L^6)}\cdot \| P_{\gtrsim N^*_1}(u_3\bar{u}_4)\| _{L^2(I;L^2)}\| u_6\|_{L^\infty (I;L^\infty )}\\
&\quad \lesssim \frac{1}{(N^*_1)^{1/2}}(N_6)^{1/2}\prod_{j=1}^6\| \chi_Iu_j\|_{U^2_\Delta(\mathbb{R})}.
\end{align*}
Noting that $(N^*_1)^{1/2}(N_6)^{1/2}\lesssim (N^*_3\cdots N^*_6)^{1/4}$, we obtain \eqref{claim:block}.}%
{%
}%

\medskip 
\noindent
\textbf{Case (III)} $N^*_1\sim N^*_4\gg N^*_5$.
\choice%
{This case is more delicate. }%
{%
}%
We consider the following three subcases, according to which two frequencies are smaller than $N^*_1$.

(IIIa) Each of $\{ N_1,N_2\}$, $\{ N_3,N_4\}$, $\{ N_5,N_6\}$ contains at most one frequency $\ll N^*_1$.
Consider the case $N^*_5=N_1$ and $N^*_6=N_3$ for instance, but the other cases can be treated similarly.
Then, we apply Lemma~\ref{lem:bs0} to $H(u_1\bar{u_2})$, Lemma~\ref{lem:L6} to $u_4,u_5,u_6$ and $L^\infty$ embedding to $u_3$, which yields
\choice{%
\[ \mathcal{I}\lesssim \frac{1}{N_2^{1/2}}N_3^{1/2}\prod_{j=1}^6\| \chi_Iu_j\|_{U^2_\Delta(\mathbb{R})}\lesssim \frac{(N^*_3\cdots N^*_6)^{1/4}}{N^*_1}\prod_{j=1}^6\| \chi_Iu_j\|_{U^2_\Delta(\mathbb{R})}.\]
}{%
\begin{align*}
\mathcal{I}&:=\Big| \int_I \int_{\mathbb{T}}H(u_1\bar{u}_2)u_3\bar{u}_4H(u_5\bar{u}_6)\,dx\,dt\Big| \\
&\;\lesssim \frac{1}{N_2^{1/2}}N_3^{1/2}\prod_{j=1}^6\| \chi_Iu_j\|_{U^2_\Delta(\mathbb{R})}\lesssim \frac{(N^*_3\cdots N^*_6)^{1/4}}{N^*_1}\prod_{j=1}^6\| \chi_Iu_j\|_{U^2_\Delta(\mathbb{R})}.
\end{align*}
}%

(IIIb) $\{ N^*_5,N^*_6\} =\{ N_1,N_2\}$ or $\{ N_5,N_6\}$.
Consider the former case, for instance.
We apply the first modified bilinear Strichartz estimate (Lemma~\ref{lem:bs1}) for $H(u_1\bar{u}_2)u_3$ and Lemma~\ref{lem:L6} for the other three $u_j$'s to obtain
\[ \mathcal{I}\lesssim \frac{(N_1\wedge N_2)^{1/2}}{N_3^{1/2}}\prod_{j=1}^6\| \chi_Iu_j\|_{U^2_\Delta(\mathbb{R})}\lesssim \frac{(N^*_3\cdots N^*_6)^{1/4}}{N^*_1}\prod_{j=1}^6\| \chi_Iu_j\|_{U^2_\Delta(\mathbb{R})}.\]

(IIIc) $\{ N^*_5,N^*_6\} =\{ N_3,N_4\}$.
Without loss of generality, we assume $N^*_5=N_3$ and $N^*_6=N_4$.

(IIIc-i) $N_3\sim N_4$. In this case, we divide $H(u_1\bar{u}_2)$ into two parts as follows:
\[ H(u_1\bar{u}_2)=HP_{\gg N_3}(u_1\bar{u}_2)+HP_{\lesssim N_3}(u_1\bar{u}_2).\]
For the first term, we can eliminate the Hilbert transformations by the fact that the frequency for $H(u_5\bar{u}_6)$ must be much bigger than that of $u_3\bar{u}_4$:
\begin{align*}
&\int_I \int_{\mathbb{T}}HP_{\gg N_3}(u_1\bar{u}_2)u_3\bar{u}_4H(u_5\bar{u}_6)\,dx\,dt \\
&\quad =-\int_I \int_{\mathbb{T}}P_{\gg N_3}(u_1\bar{u}_2)H\big[ u_3\bar{u}_4H(u_5\bar{u}_6)\big] \,dx\,dt \\
&\quad =-\int_I \int_{\mathbb{T}}P_{\gg N_3}(u_1\bar{u}_2)u_3\bar{u}_4H^2(u_5\bar{u}_6)\,dx\,dt\\
&\quad =\int_I \int_{\mathbb{T}}P_{\gg N_3}(u_1\bar{u}_2)u_3\bar{u}_4u_5\bar{u}_6\,dx\,dt.
\end{align*}
Now, the desired bound can be obtained by applying Lemma~\ref{lem:bs0} to $u_3u_5$, $L^\infty$ embedding to $u_4$ and Lemma~\ref{lem:L6} to the others, for instance.
To estimate the contribution from the second term, we use the second modified bilinear Strichartz estimate (Lemma~\ref{lem:bs2}) for $HP_{\lesssim N_3}(u_1\bar{u}_2)u_3$ and Lemma~\ref{lem:L6} for $u_4,u_5,u_6$.
In each case, we obtain the factor $a(N^*_1)(N^*_1N^*_6)^{1/2}\sim a(N^*_1)(N^*_3\cdots N^*_6)^{1/4}$.

(IIIc-ii) $N_3\gg N_4$.
In this case, we make a finer decomposition:
\begin{align*}
&H(u_1\bar{u}_2)u_3\bar{u}_4H(u_5\bar{u}_6)\\
&=HP_{\ll N_3}(u_1\bar{u}_2)u_3\bar{u}_4H(u_5\bar{u}_6)+H(u_1\bar{u}_2)u_3\bar{u}_4HP_{\ll N_3}(u_5\bar{u}_6)\\
&\quad -HP_{\ll N_3}(u_1\bar{u}_2)u_3\bar{u}_4HP_{\ll N_3}(u_5\bar{u}_6)\\
&\quad +HP_{\gg N_3}(u_1\bar{u}_2)u_3\bar{u}_4HP_{\gtrsim N_3}(u_5\bar{u}_6)+HP_{\sim N_3}(u_1\bar{u}_2)u_3\bar{u}_4HP_{\gtrsim N_3}(u_5\bar{u}_6).
\end{align*}
There is no contribution from the third term, while the estimate for the fourth term is exactly the same as the first term in Case (IIIc-i), since in the integral we can replace $P_{\gtrsim N_3}(u_5\bar{u}_6)$ by $u_5\bar{u}_6$.
For the first two terms, we can separate two functions of high frequency from the Hilbert transformation; for instance, 
\begin{align*}
&\int_I \int_{\mathbb{T}}HP_{\ll N_3}(u_1\bar{u}_2)u_3\bar{u}_4H(u_5\bar{u}_6)\,dx\,dt \\
&\quad =-\int_I \int_{\mathbb{T}}H\big[ HP_{\ll N_3}(u_1\bar{u}_2)u_3\bar{u}_4\big] u_5\bar{u}_6\,dx\,dt \\
&\quad =-\int_I \int_{\mathbb{T}}HP_{\ll N_3}(u_1\bar{u}_2)(Hu_3)\bar{u}_4u_5\bar{u}_6\,dx\,dt.
\end{align*}
This can again be treated similarly to the first term in Case (IIIc-i).
To estimate the contribution from the last term, we first notice that the frequency for $u_5\bar{u}_6$ must stay $\sim N_3$ in the integral; hence let us write it as $\tilde{P}_{\sim N_3}(u_5\bar{u}_6)$.
We estimate the integral in two ways.
First, we apply Lemma~\ref{lem:bs0} to $P_{\sim N_3}(u_1\bar{u_2})$ and $\tilde{P}_{\sim N_3}(u_5\bar{u}_6)$ and $L^\infty$ embedding to $u_3,u_4$, which yields
\[ \Big| \int_I \int_{\mathbb{T}}HP_{\sim N_3}(u_1\bar{u}_2)u_3\bar{u}_4H\tilde{P}_{\sim N_3}(u_5\bar{u}_6)\,dx\,dt\Big| \lesssim \frac{1}{N_3^{1/2}}\frac{1}{N_3^{1/2}}N_3^{1/2}N_4^{1/2}\prod_{j=1}^6\| \chi_Iu_j\|_{U^2_\Delta(\mathbb{R})}.\]
Next, applying Lemma~\ref{lem:bs2} to $HP_{\sim N_3}(u_1\bar{u_2})u_3$ and $H\tilde{P}_{\sim N_3}(u_5\bar{u}_6)\bar{u}_4$, we have
\[ \Big| \int_I \int_{\mathbb{T}}HP_{\sim N_3}(u_1\bar{u}_2)u_3\bar{u}_4H\tilde{P}_{\sim N_3}(u_5\bar{u}_6)\,dx\,dt\Big| \lesssim \frac{N_3^{1/2}}{N_1^{1/2}}\frac{N_3^{1/2}}{N_5^{1/2}}\prod_{j=1}^6\| \chi_Iu_j\|_{U^2_\Delta(\mathbb{R})}.\]
Interpolating these estimates, we obtain
\begin{align*}
\Big| \int_I \int_{\mathbb{T}}HP_{\sim N_3}(u_1\bar{u}_2)u_3\bar{u}_4H\tilde{P}_{\sim N_3}(u_5\bar{u}_6)\,dx\,dt\Big| &\lesssim \frac{N_3^{1/4}N_4^{1/4}}{(N^*_1)^{1/2}}\prod_{j=1}^6\| \chi_Iu_j\|_{U^2_\Delta(\mathbb{R})}\\
&\sim \frac{(N^*_3\cdots N^*_6)^{1/4}}{N^*_1}\prod_{j=1}^6\| \chi_Iu_j\|_{U^2_\Delta(\mathbb{R})},
\end{align*}
as desired.

\choice%
{\medskip
\noindent
\textbf{Case (IV)} $N^*_1\sim N^*_3\gg N^*_4$.
We consider the following three cases separately.

(IVa) Each of $\{ N_1,N_2\}$, $\{ N_3,N_4\}$ and $\{ N_5,N_6\}$ has exactly one frequency $\sim N^*_1$.
In this case, we can break the binding by the Hilbert transformation among all the six functions; for instance, if $N_1\sim N^*_1\gg N_2$ then $H(u_1\bar{u}_2)=(Hu_1)\bar{u}_2$.
The same argument as for DNLS in \cite{S21} can be applied; namely, we use Lemma~\ref{lem:bs0} twice for the functions corresponding to $N^*_1,\dots ,N^*_4$ and the $L^\infty$ embedding twice for the functions corresponding to $N^*_5,N^*_6$.
We need to consider a product of two high-frequency functions, but this is possible by applying \eqref{est:bs0+} with $K\sim N^*_1$ for the product of $u\bar{u}$ type and by dividing each high-frequency functions into positive-  and negative-frequency parts and applying the last estimate in Remark~\ref{rem:bs} for the product of $uu$ type (note that it is impossible to have the same sign for all of three high frequencies).

(IVb) $N_3\sim N_4\sim N^*_1$.
Without loss of generality, let us assume $N_1\sim N^*_1\gg N_2,N_5,N_6$, so that $H(u_1\bar{u}_2)=(Hu_1)\bar{u}_2$.
We also observe that $u_3\bar{u}_4=P_{\gtrsim N^*_1}(u_3\bar{u}_4)$ in the integral.
Then, we use Lemma~\ref{lem:bs0} for $u_3\bar{u}_4$, Lemma~\ref{lem:bs1} for $(Hu_1)H(u_5\bar{u}_6)$ and $L^\infty$ embedding for $u_2$ to obtain
\[ \mathcal{I}\lesssim \frac{1}{(N^*_1)^{1/2}}\frac{(N_5\wedge N_6)^{1/2}}{N_1^{1/2}}N_2^{1/2}\prod_{j=1}^6\| \chi_Iu_j\|_{U^2_\Delta(\mathbb{R})}\lesssim \frac{(N^*_3\cdots N^*_6)^{1/4}}{N^*_1}\prod_{j=1}^6\| \chi_Iu_j\|_{U^2_\Delta(\mathbb{R})}.\]

(IVc) $N_1\sim N_2\sim N^*_1$ or $N_5\sim N_6\sim N^*_1$.
Consider the former case.
Again, we see $u_1\bar{u}_2=P_{\gtrsim N^*_1}(u_1\bar{u}_2)$ in the integral, so we use Lemma~\ref{lem:bs0} for $u_1\bar{u}_2$.
If $N_5$ or $N_6\sim N^*_1$, then $H(u_5\bar{u}_6)$ can be rephrased as $(Hu_5)\bar{u}_6$ or $u_5H\bar{u}_6$ and we use Lemma~\ref{lem:bs0} (for the product of functions of frequencies $N^*_1,N^*_4$) and two $L^\infty$ embeddings (for the functions of frequencies $N^*_5,N^*_6$).
If $N_3$ or $N_4\sim N^*_1$, we use Lemma~\ref{lem:bs1} and one $L^\infty$ embedding, just as in the preceding case.
In each case, we obtain the same bound as in Case (IVb).}%
{%
}%

\medskip
\noindent
\textbf{Case (V)} $N^*_1\sim N^*_2\gg N^*_3$.
Let us divide into the following four cases.

(Va) One of $\{ N_1,N_2\}$ and one of $\{ N_5,N_6\}$ are comparable to $N^*_1$.
In this case, we can break the binding by the Hilbert transformation between $u_1$ and $\bar{u}_2$ and between $u_5$ and $\bar{u}_6$.
Applying Lemma~\ref{lem:bs0} twice to the pairs $(N^*_1,N^*_3)$ and $(N^*_2,N^*_4)$ and the $L^\infty$ embedding to the functions corresponding to $N^*_5,N^*_6$, we obtain
\[ \mathcal{I}\lesssim \frac{(N^*_5N^*_6)^{1/2}}{N^*_1}\prod_{j=1}^6\| \chi_Iu_j\|_{U^2_\Delta(\mathbb{R})}\lesssim \frac{(N^*_3\cdots N^*_6)^{1/4}}{N^*_1}\prod_{j=1}^6\| \chi_Iu_j\|_{U^2_\Delta(\mathbb{R})}.\]

(Vb) One of $\{ N_1,N_2,N_5,N_6\}$ and one of $\{ N_3,N_4\}$ are comparable to $N^*_1$.
Assume, say, $N_1\sim N_3\sim N^*_1$.
We can deal with $u_1$ and $\bar{u}_2$ of $H(u_1\bar{u}_2)$ separately, which allows us to treat two cases $N_2\geq N_4$ and $N_2\leq N_4$ in a parallel manner (let us assume $N_2\leq N_4$, say).
Apply Lemma~\ref{lem:bs0} to $u_3\bar{u}_4$, Lemma~\ref{lem:bs1} to $(Hu_1)H(u_5\bar{u}_6)$ and $L^\infty$ embedding to $\bar{u}_2$, then we have
\[ \mathcal{I}\lesssim \frac{1}{N_3^{1/2}}\frac{(N_5\wedge N_6)^{1/2}}{N_1^{1/2}}N_2^{1/2}\prod_{j=1}^6\| \chi_Iu_j\|_{U^2_\Delta(\mathbb{R})}\lesssim \frac{(N^*_3\cdots N^*_6)^{1/4}}{N^*_1}\prod_{j=1}^6\| \chi_Iu_j\|_{U^2_\Delta(\mathbb{R})}.\]

(Vc) $N_3\sim N_4\sim N^*_1$.
It suffices to apply Lemma~\ref{lem:bs1} twice to $H(u_1\bar{u}_2)u_3$ and $\bar{u}_4H(u_5\bar{u}_6)$, which gives
\[ \mathcal{I}\lesssim \frac{(N_1\wedge N_2)^{1/2}}{N_3^{1/2}}\frac{(N_5\wedge N_6)^{1/2}}{N_4^{1/2}}\prod_{j=1}^6\| \chi_Iu_j\|_{U^2_\Delta(\mathbb{R})}\lesssim \frac{(N^*_3\cdots N^*_6)^{1/4}}{N^*_1}\prod_{j=1}^6\| \chi_Iu_j\|_{U^2_\Delta(\mathbb{R})}.\]

(Vd) $N_1\sim N_2\sim N^*_1$ or $N_5\sim N_6\sim N^*_1$.
This is the hardest case, and let us focus on the former situation $N_1\sim N_2\gg N_3,\dots ,N_6$.
First, using the conditions \eqref{property:a} on $a$ and \eqref{cond:Nj} on $N$, we deduce that
\begin{equation}\label{multiplier2}
\text{R.H.S.~of \eqref{multiplier}}\lesssim a(N^*_1)\Big( \frac{N^*_3}{N^*_1}\Big) ^{1/2}.
\end{equation}
(In fact, it is only in the case $N_1\sim N_2\sim N^*_1$ that we need to exploit the growth condition on $a$ --- the third line in \eqref{property:a}.)
In the following, we only consider the case $N_3\geq N_4$, $N_5\geq N_6$; the other cases are parallel, though.

(Vd-i) $N^*_3\gg N^*_4$, namely, only one of $N_3,\dots ,N_6$ is much bigger than the others. 
In this case, we can move the Hilbert transformation on $u_1\bar{u}_2$ to one of $u_3,\dots ,u_6$; if $N_3=N^*_3$ we have
\begin{align*}
\int_I \int_{\mathbb{T}}H(u_1\bar{u}_2)u_3\bar{u}_4H(u_5\bar{u}_6)\,dx\,dt
&=-\int_I \int_{\mathbb{T}}u_1\bar{u}_2H(u_3)\bar{u}_4H(u_5\bar{u}_6)\,dx\,dt,
\end{align*}
and if $N_5=N^*_3$ we have
\begin{align*}
\int_I \int_{\mathbb{T}}H(u_1\bar{u}_2)u_3\bar{u}_4H(u_5\bar{u}_6)\,dx\,dt&=-\int_I \int_{\mathbb{T}}u_1\bar{u}_2u_3\bar{u}_4H^2(u_5)\bar{u}_6\,dx\,dt\\
&=\int_I \int_{\mathbb{T}}u_1\bar{u}_2u_3\bar{u}_4u_5\bar{u}_6\,dx\,dt.
\end{align*}
Then, we can obtain 
\[ \mathcal{I}\lesssim \frac{(N^*_6N^*_4)^{1/2}}{N^*_1}\prod_{j=1}^6\| \chi_Iu_j\|_{U^2_\Delta(\mathbb{R})}\lesssim \frac{(N^*_3\cdots N^*_6)^{1/4}}{N^*_1}\prod_{j=1}^6\| \chi_Iu_j\|_{U^2_\Delta(\mathbb{R})}\]
by using Lemmas~\ref{lem:bs0} and \ref{lem:bs1} when $N_3=N^*_3$, or Lemma~\ref{lem:bs0} twice when $N_5=N^*_3$.
Note that we do not actually need the improved bound \eqref{multiplier2} in this subcase.

(Vd-ii) $N^*_3\sim N^*_6$.
In this case, we use Lemma~\ref{lem:bs2} for $H(u_1\bar{u}_2)u_3$ (noticing that $u_1\bar{u}_2$ may be replaced by $P_{\lesssim N^*_3}(u_1\bar{u}_2)$), Lemma~\ref{lem:L6} for the others, to obtain 
\[ \mathcal{I}\lesssim \frac{(N^*_3)^{1/2}}{(N^*_1)^{1/2}}\prod_{j=1}^6\| \chi_Iu_j\|_{U^2_\Delta(\mathbb{R})}.\]
Combining it with \eqref{multiplier2}, we have the desired estimate.

(Vd-iii) $N^*_3\sim N^*_4\gg N^*_6$ and $N_5\gg N_6$.
In this case $H(u_5\bar{u}_6)=(Hu_5)\bar{u}_6$, so that we can separate $u_3,\dots ,u_6$.

(Vd-iii-1) If $N^*_3\sim N^*_4\gg N^*_5$, we use Lemma~\ref{lem:bs2} for $H(u_1\bar{u}_2)u^*_3$, Lemma~\ref{lem:bs0} for $u^*_4u^*_5$, and the $L^\infty$ embedding for $u^*_6$, where $u^*_j$ means the function corresponding to $N^*_j$.
The resulting eatimate is
\[ \mathcal{I}\lesssim \frac{(N^*_3)^{1/2}}{(N^*_1)^{1/2}}\frac{1}{(N^*_4)^{1/2}}(N^*_6)^{1/2}\prod_{j=1}^6\| \chi_Iu_j\|_{U^2_\Delta(\mathbb{R})}\lesssim \frac{(N^*_5N^*_6)^{1/4}}{(N^*_1)^{1/2}}\prod_{j=1}^6\| \chi_Iu_j\|_{U^2_\Delta(\mathbb{R})},\]
which is sufficient together with \eqref{multiplier2}. 

(Vd-iii-2) If $N^*_3\sim N^*_5\gg N^*_6$, by the assumption we have $N_3\sim N_4\sim N_5\gg N_6$.
In this case, we make the decomposition 
\[ u_3\bar{u}_4=P_{\ll N_3}(u_3\bar{u}_4)+P_{\sim N_3}(u_3\bar{u}_4).\]
For the first term, we can separate $u_1$ and $u_2$ as
\begin{align*}
\int_I \int_{\mathbb{T}}H(u_1\bar{u}_2)P_{\ll N_3}(u_3\bar{u}_4)H(u_5)\bar{u}_6\,dx\,dt&=-\int_I \int_{\mathbb{T}}u_1\bar{u}_2H\big[ P_{\ll N_3}(u_3\bar{u}_4)H(u_5)\bar{u}_6\big] \,dx\,dt\\
&=\int_I \int_{\mathbb{T}}u_1\bar{u}_2P_{\ll N_3}(u_3\bar{u}_4)u_5\bar{u}_6\,dx\,dt.
\end{align*}
Hence, this is similar to Case (Vd-i) and easily treated by applying Lemmas~\ref{lem:bs0} and \ref{lem:bs1}.
For the second term, we use Lemma~\ref{lem:bs2} for $H(u_1\bar{u}_2)H(u_5)$, Lemma~\ref{lem:bs0} for $P_{\sim N_3}(u_3\bar{u}_4)$ and $L^\infty$ embedding for $\bar{u}_6$, to obtain 
\[ \mathcal{I}\lesssim \frac{N_3^{1/2}}{N_1^{1/2}}\frac{1}{N_3^{1/2}}N_6^{1/2}\prod_{j=1}^6\| \chi_Iu_j\|_{U^2_\Delta(\mathbb{R})}\lesssim \frac{(N^*_5N^*_6)^{1/4}}{(N^*_1)^{1/2}}\prod_{j=1}^6\| \chi_Iu_j\|_{U^2_\Delta(\mathbb{R})},\]
which is again sufficient.

(Vd-iv) $N^*_3\sim N^*_4\gg N^*_6$ and $N_5\sim N_6$.

(Vd-iv-1) If $N_3\gg N_5$, so $N_3\sim N_4\gg N_5\sim N_6$, we use Lemma~\ref{lem:bs2} for $H(u_1\bar{u}_2)u_3$ and Lemma~\ref{lem:bs1} for $\bar{u}_4H(u_5\bar{u}_6)$.
We obtain
\[ \mathcal{I}\lesssim \frac{N_3^{1/2}}{N_1^{1/2}}\frac{N_5^{1/2}}{N_4^{1/2}}\prod_{j=1}^6\| \chi_Iu_j\|_{U^2_\Delta(\mathbb{R})}\lesssim \frac{(N^*_5N^*_6)^{1/4}}{(N^*_1)^{1/2}}\prod_{j=1}^6\| \chi_Iu_j\|_{U^2_\Delta(\mathbb{R})}.\]

(Vd-iv-2) If $N_5\gtrsim N_3\sim N_4$, namely, $N_5\sim N_6\gg N_3\sim N_4$, we make a decomposition similar to that in Case (Vdiii-2):
\[ H(u_5\bar{u}_6)=HP_{\gg N_3}(u_5\bar{u}_6)+HP_{\lesssim N_3}(u_5\bar{u}_6).\]
For the first term, we can separate $u_1$ and $u_2$ as in Case (Vd-iii-2), since $H[u_3\bar{u}_4HP_{\gg N_3}(u_5\bar{u}_6)]=u_3\bar{u}_4H^2P_{\gg N_3}(u_5\bar{u}_6)$.
The estimate is then easy and similar to Case (Vd-i).
For the second term, we can put $P_{\lesssim N_3}$ also on $H(u_1\bar{u}_2)$.
Applying Lemma~\ref{lem:bs2} to $HP_{\lesssim N_3}(u_1\bar{u}_2)u_3$ and Lemma~\ref{lem:L6} to the others, we have
\[ \mathcal{I}\lesssim \frac{N_3^{1/2}}{N_1^{1/2}}\prod_{j=1}^6\| \chi_Iu_j\|_{U^2_\Delta(\mathbb{R})}\lesssim \frac{(N^*_5N^*_6)^{1/4}}{(N^*_1)^{1/2}}\prod_{j=1}^6\| \chi_Iu_j\|_{U^2_\Delta(\mathbb{R})}.\]

(Vd-iv-3) The only remaining case is $N_5\gtrsim N_3\gg N_4$, namely, either $N_5\sim N_6\sim N_3\gg N_4$ or $N_5\sim N_6\gg N_3\gg N_4$.
We make a slightly finer decomposition
\[ H(u_5\bar{u}_6)=HP_{\ll N_3}(u_5\bar{u}_6)+\sum _{N_3\lesssim K\lesssim N_5}HP_K(u_5\bar{u}_6).\]
The first term is again easy to treat, since we can separate $u_1$ and $u_2$ by the identity $H[u_3\bar{u}_4HP_{\ll N_3}(u_5\bar{u}_6)]=(Hu_3)\bar{u}_4HP_{\ll N_3}(u_5\bar{u}_6)$.
For the second term, for each $K$ we can put $P_{\lesssim K}$ on $H(u_1\bar{u}_2)$.
Hence, by applying Lemma~\ref{lem:bs2} to $HP_{\lesssim K}(u_1\bar{u}_2)u_3$ and using Lemma~\ref{lem:bs0} for $HP_K(u_5\bar{u}_6)$, $L^\infty$ embedding for $\bar{u}_4$, we obtain
\[ \mathcal{I}\lesssim \sum _{N_3\lesssim K\lesssim N_5}\frac{K^{1/2}}{N_1^{1/2}}\frac{1}{K^{1/2}}N_4^{1/2}\prod_{j=1}^6\| \chi_Iu_j\|_{U^2_\Delta(\mathbb{R})}\lesssim (N^*_3)^{0+}\frac{(N^*_5N^*_6)^{1/4}}{(N^*_1)^{1/2}}\prod_{j=1}^6\| \chi_Iu_j\|_{U^2_\Delta(\mathbb{R})},\]
which together with \eqref{multiplier2} shows the desired estimate \eqref{claim:block}.

We have thus completed the case-by-case analysis for the proof of \eqref{claim:block}.
\end{proof}

%%%%%%%%%%%%%%%%%%%%%%%%%%%%%%%%%%%%%%%%%%%%%%%%%%%%%%%%%%%%%%%%%%%
%     Acknowledgements
%%%%%%%%%%%%%%%%%%%%%%%%%%%%%%%%%%%%%%%%%%%%%%%%%%%%%%%%%%%%%%%%%%%

%\bigskip
\noindent
{\bf Acknowledgements}.
The first author N.K is partially supported by JSPS KAKENHI Grant-in-Aid for Young Researchers (B) (16K17626) and Grant-in-Aid for Scientific Research (C) (20K03678).
The second author Y.T is partially supported by JSPS KAKENHI Grant-in-Aid for Scientific Research (B) (17H02853).

%%%%%%%%%%%%%%%%%%%%%%%%%%%%%%%%%%%%%%%%
%%%%%%%%%%%%%%%%%%%%%%%%%%%%%%%%%%%%%%%%
%%%%%%%%%%%%%%%%%%%%%%%%%%%%%%%%%%%%%%%%

\end{document}